\newtheorem{assumption}{Assumption}
\newtheorem{theorem}{Theorem}
\newtheorem{remark}{Remark}
\newtheorem{definition}{Definition}
\newenvironment{proof}{{\noindent\it Proof.}\quad}{\hfill $\square$\par}
\newcommand{\customref}[2]{\hyperref[#2]{#1}}
\newcommand{\R}{\mathbb{R}}
\newcommand{\E}{\mathbb{E}}
\newcommand{\norm}[1]{\left \lVert #1 \right \rVert}
\newcommand{\argmin}{\mathop{\mathrm{arg\,min}}}
\newcommand{\be}{\begin{equation}}
\newcommand{\ee}{\end{equation}}
\newcommand{\bee}{\begin{equation*}}
\newcommand{\eee}{\end{equation*}}
\newcommand{\bea}{\begin{eqnarray}}
\newcommand{\eea}{\end{eqnarray}}
\newcommand{\beaa}{\begin{eqnarray*}}
\newcommand{\eeaa}{\end{eqnarray*}}
\newtheorem{lemma}{Lemma}
\title{Adaptive directional decomposition methods for\\ nonconvex constrained optimization}
\author{%
  Qiankun Shi\thanks{\href{mailto:shiqk@mail2.sysu.edu.cn}{shiqk@mail2.sysu.edu.cn}}%
  \and
  Xiao Wang\thanks{Corresponding author. \href{mailto:wangx936@mail.sysu.edu.cn}{wangx936@mail.sysu.edu.cn}}%
}
\date{School of Computer Science and Engineering, Sun Yat-sen University.}
\begin{document}
\maketitle
\begin{abstract} 
In this paper, we study nonconvex constrained optimization problems with both equality and inequality constraints, covering deterministic and stochastic settings. We propose a novel first-order algorithm framework  that employs a decomposition strategy to balance objective reduction and constraint satisfaction, together with adaptive update of stepsizes and merit parameters. Under certain conditions, the proposed adaptive directional decomposition methods attain an iteration complexity of order \(O(\epsilon^{-2})\) for finding an \(\epsilon\)-KKT point in the deterministic setting. In the stochastic setting, we further develop stochastic variants of approaches and analyze their theoretical properties by leveraging the perturbation theory. We establish the high-probability oracle complexity to find an $\epsilon$-KKT point of  order \( \tilde O(\epsilon^{-4}, \epsilon^{-6}) \) (resp.  \(\tilde O(\epsilon^{-3}, \epsilon^{-5}) \)) for gradient and constraint evaluations, in the absence (resp. presence) of  sample-wise smoothness. To the best of our knowledge, the obtained complexity bounds are comparable to, or improve upon, the {state-of-the-art} results in the literature.
\end{abstract}

\section{Introduction}\label{sec1}

{This work focuses on} the following nonconvex constrained  optimization problem:
\begin{equation}\label{p}
\begin{aligned}
\min_{x \in \mathbb{R}^d} &\quad   f(x)  \\
\text{s.t.} &\quad c(x) = 0, ~~x \ge 0,
\end{aligned}
\end{equation}
where \(f: \R^d \to \R\) and \(c: \R^d \to \R^m\) are continuously differentiable and  possibly nonconvex. 
Such problems {appear in numerous practical scenarios,} including resource allocation \cite{yi2016initialization} and signal processing \cite{elbir2023twenty}. In particular, in many large-scale, data-driven settings, {such as} robust learning \cite{chamon2022constrained}, physics–informed neural networks \cite{cuomo2022scientific}, and fairness-aware empirical risk minimization \cite{donini2018empirical}, both objective and constraints cannot be directly accessible, while only stochastic or {sample-based} approximations are available. {Specifically, both $f$ and $c$ are often defined as expectations of random functions, i.e.,
\begin{equation}\label{inq-cons}
f(x) = \E_\xi [F(x; \xi)], \quad c(x) = \E_\zeta [C(x; \zeta)], 
\end{equation}
where \(\xi\) and \(\zeta\) are random variables defined on a probability space \(\Xi\), independent of \(x\).} The stochastic realizations \(F(x; \xi): \R^d \times \Xi \to \R\) and \(C(x; \zeta): \R^d \times \Xi \to \R^m\) are assumed continuously differentiable but potentially nonconvex in \(x\) for almost any \(\xi\) and \(\zeta\). {This work addresses} both deterministic and stochastic {instances of problem \eqref{p}, and develops efficient first-order algorithms for finding an}  \(\epsilon\)-Karush-Kuhn-Tucker point (\(\epsilon\)-KKT point, see Definition \ref{def-1-o}), along with their associated  oracle complexity. 

Existing methods for solving {(stochastic) nonconvex constrained} problems primarily fall into three categories: proximal point methods, penalty and augmented Lagrangian methods (ALM), and sequential quadratic programming (SQP) methods. 
{Proximal point methods \cite{ma2020quadratically,boob2023stochastic,boob2025level,jia2025first} tackle the original nonconvex constrained problem by repeatedly solving strongly convex subproblems obtained through the inclusion of quadratic regularization, with  techniques from (stochastic) convex constrained optimization being applied to each subproblem.}
{These methods often employ a double- or even triple-loop structure, providing substantial flexibility in algorithmic design and allowing efficient convex optimization solvers to be adapted to nonconvex constrained settings.}
Penalty methods \cite{alacaoglu2024complexity,cartis2017corrigendum,lin2022complexity,shi2020penalty,wang2017penalty}, including ALMs \cite{jin2022stochastic,li2021rate,sahin2019inexact,shi2025momentum}, {form another major class of algorithms.}  
{Their key idea is to incorporate constraints into the objective function using penalty or augmented terms, transforming the constrained problem into one or a sequence of unconstrained problems.} 
{These methods are conceptually simple and easy to implement, though their convergence guarantees often depend on appropriate tuning of penalty parameters.} 
{SQP methods \cite{berahas2021sequential,curtis2024worst,curtis2024sequential,qiu2023sequential,shen2025sequential} take a different approach by iteratively solving quadratic programming subproblems that locally approximate the original problem through quadratic models of the objective and linearizations of the constraints, providing an effective framework for handling nonconvex constrained optimization.} 
{Overall, these three classes of methods exhibit complementary strengths, and several recent works have sought to integrate two or more of them to design more efficient hybrid algorithms \cite{na2023adaptive,na2023inequality}.}

\subsection{Related work}

\subsubsection*{Deterministic nonconvex constrained optimization}
For deterministic nonconvex constrained optimization, {several works have analyzed the iteration complexity of their proposed first-order algorithms.}
{Among proximal point methods,} \cite{ma2020quadratically}  
{applies strongly convex regularization to handle complex objectives and constraints,} 
obtaining an \(O(\epsilon^{-3})\) {iteration} complexity for smooth cases and \(O(\epsilon^{-4})\) for nonsmooth functions. Boob et al. \cite{boob2023stochastic} adopt a similar idea and introduce a constraint extrapolation (ConEx) technique when solving strongly convex subproblems. Jia and Grimmer \cite{jia2025first} relax the constraint qualification (CQ) conditions and prove convergence to either an \(\epsilon\)-KKT point or an \(\epsilon\)-Fritz-John point {with} the same complexity. Furthermore, Boob et al. \cite{boob2025level} construct  quadratic models for both  objective and constraints and involve variable constraint levels to formulate a simpler strongly convex subproblem, thereby achieving an \(O(\epsilon^{-2})\) complexity. 
{With regard to} penalty methods and ALMs, most approaches \cite{li2021rate,lu2022single,lin2022complexity} require a sufficiently large merit parameter, resulting in the \(O(\epsilon^{-3})\) {iteration} complexity. 
Proximal ALM \cite{xie2021complexity}, however, {achieves an \(O(\epsilon^{-2})\) outer-loop iteration complexity} under a constant merit parameter, but the subproblem is more complex and cannot be solved with \(O(1)\) complexity. 
Cartis et al. \cite{cartis2017corrigendum} develop a novel merit function based on the \(\ell_2\) norm to balance the objective and constraints, proposing a two-stage exact penalty method {which achieves an \(O(\epsilon^{-2})\) iteration complexity.} Moreover, Curtis et al. \cite{curtis2024worst} propose an SQP method that incorporates the \(\ell_2\) norm as a penalty within the merit function, solving QP subproblems iteratively to achieve the same iteration complexity. 
{Recently, several methods based on directional decomposition have been proposed \cite{gratton2025simple,schechtman2023orthogonal}, which decompose the descent direction according to the tangent and normal spaces of the constraint Jacobian. One component aims to reduce the objective function while minimally affecting the constraints, and the other seeks to decrease constraint violation. These approaches have all achieved an \(O(\epsilon^{-2})\) iteration complexity for smooth nonconvex constrained problems.}

\subsubsection*{Stochastic nonconvex constrained optimization}
Stochastic constrained optimization problems can be categorized into semi-stochastic  and fully-stochastic problems.
Semi-stochastic problems (stochastic objective, deterministic constraints) see all three classes of methods  achieving \(O(\epsilon^{-4})\) oracle complexity \cite{boob2023stochastic,curtis2024worst,lu2024variance}, in terms of stochastic objective gradient evaluations, without assuming mean-squared  smoothness on stochastic components. This oracle complexity order aligns with the unconstrained stochastic lower bound \cite{arjevani2023lower}. When assuming  mean-squared smoothness, proximal point, penalty, and ALM methods improve to $O(\epsilon^{-3})$ via variance reduction (e.g., STORM  \cite{cutkosky2019momentum} or SPIDER \cite{fang2018spider}). 
For instance, in both \cite{ma2020quadratically} and  \cite{boob2023stochastic} the proposed proximal point methods demonstrate an \(O(\epsilon^{-4})\) oracle complexity.
Additionally, Boob et al. \cite{boob2025level} show that, when mean-squared smoothness is assumed, the integration of variance reduction techniques to the proximal point method can improve the algorithm’s oracle complexity order to \(O(\epsilon^{-3})\). Idrees et al. \cite{idrees2024constrained} validate this approach, combining the stochastic recursive momentum (STORM \cite{cutkosky2019momentum}) method to achieve an \(\tilde O(\epsilon^{-3})\) order complexity. 
{With regard to} penalty methods, Jin and Wang \cite{jin2025stochastic}  present a stochastic nested primal-dual method for problems with compositional objective and achieve the oracle complexity of $O(\epsilon^{-5})$. By employing a framework of linearized ALM, Jin and Wang \cite{jin2022stochastic} address stochastic optimization with numerous constraints and achieved oracle complexity of  \(O(\epsilon^{-5})\),  when the initial point is feasible. Shi et al. \cite{shi2025momentum} propose a linearized ALM {together with variance reduction techniques}, attaining an oracle complexity of  \(O(\epsilon^{-3})\) when starting from a nearly feasible initial point. Furthermore, Lu et al. \cite{lu2024variance} employ a truncated scheme for the variance reduction and achieve similar oracle complexity bounds but with $\epsilon$-constraint violation deterministically guaranteed. Additionally, Lu et al. \cite{lu2024variance} utilize the Polyak momentum method to obtain an oracle complexity of \(\tilde O(\epsilon^{-4})\) without mean-squared smoothness. For SQP methods, the order complexity for problems with only equality constraints has been shown to be \(O(\epsilon^{-4})\) \cite{curtis2024worst}. Variance-reduced SQP methods \cite{berahas2023accelerating} have also been studied, but {\cite{berahas2023accelerating} focuses} on finite-sum form rather than general expectation-based form.

Research on fully-stochastic problems (stochastic objective, stochastic constraints) is less mature, with existing complexities generally higher than those in semi-stochastic cases. Proximal point methods, such as those proposed by Ma et al. \cite{ma2020quadratically} and Boob et al. \cite{boob2023stochastic}, require solving a stochastic convex subproblem with high accuracy, leading to an overall oracle complexity of \(O(\epsilon^{-6})\). {Penalty {methods and ALMs} for fully-stochastic problems have also been studied.} Alacaoglu et al. \cite{alacaoglu2024complexity} and Cui et al. \cite{cui2025two} develop methods based on STORM technique \cite{cutkosky2019momentum} to tackle fully-stochastic problems; however, due to the {large} merit parameter, {both the variance and the Lipschitz constant associated with the merit function become correspondingly large,} resulting in an oracle complexity of \(O(\epsilon^{-5})\), even when assuming mean-squared smoothness. Liu and Xu \cite{liu2025single} propose an exact penalty method combined with the SPIDER technique \cite{fang2018spider}, demonstrating subgradient and constraint function value complexity of \(O(\epsilon^{-4})\) and \(O(\epsilon^{-6})\), respectively, for nonsmooth problems, offering a promising direction for solving smooth fully-stochastic problems. 
Building on this line of work, Cui et al. \cite{cui2025exact} develop an adaptive exact penalty method for smooth equality-constrained problems and obtain improved oracle complexity bounds of \(O(\epsilon^{-3})\) for the stochastic gradient evaluations  and \(O(\epsilon^{-5})\) for the constraint function value evaluations. 
Recently, SQP methods \cite{shen2025sequential} have also been explored for similar problems and current results yield an oracle complexity of order \(O(\epsilon^{-8})\). 

\subsection{Challenges}

Focusing on inequality-constrained optimization, proximal point methods are often hindered by their double-loop structure, resulting in a relatively higher worst-case complexity.  
{A representative example is the inexact constrained proximal point (ICPP) method combined with ConEx \cite{boob2023stochastic}, which has been applied to fully-stochastic constrained problems.}
In each iteration, ConEx solves a strongly convex subproblem {through} a primal-dual framework {applied to} the corresponding Lagrange saddle-point formulation. 
However, the ICPP method, due to its requirement of a strictly feasible initial point, is limited to addressing inequality-constrained problems. Moreover, {in the fully-stochastic setting}, solving a stochastic strongly convex-concave subproblem to achieve the desired accuracy requires {an inner-loop} complexity of \(O(\epsilon^{-4})\). 
Then combined with the outer-loop complexity of $O(\epsilon^{-2})$, the overall complexity of ICPP+ConEx amounts to $O(\epsilon^{-6})$.
Stochastic SQP methods for semi-stochastic problems with equality constraints have been thoroughly explored, with results on both global convergence and oracle complexity \cite{berahas2021sequential,curtis2024worst,na2023adaptive}. While stochastic SQP methods for semi-stochastic problems with inequality constraints have been explored in \cite{curtis2024sequential,na2023inequality,qiu2023sequential}, 
{none of them establish complete oracle complexity guarantees.} 
{For fully-stochastic problems, the literature is even more limited; the only known work, \cite{shen2025sequential}, considers equality-constrained problems and achieves an $O(\epsilon^{-8})$ oracle complexity, leaving substantial room for improvement.}

Penalty methods, particularly quadratic penalty methods, generally require a sufficiently large merit parameter to ensure the feasibility of solutions. For instance, both  \cite{alacaoglu2024complexity} and \cite{cui2025two} show that a merit parameter of \(\Theta(\epsilon^{-1})\) is necessary to guarantee an \(\epsilon\)-feasible solution. 
{Such a large penalty parameter, however, causes the Lipschitz constant of the merit function and the variance of the stochastic gradient to scale, resulting in an overall oracle complexity of at least \(O(\epsilon^{-5})\) for fully-stochastic problems, even when variance-reduction techniques are incorporated.} 
{To mitigate this issue, the proximal ALM \cite{xie2021complexity} introduces a strongly convex regularization term into the AL function, which allows the merit parameter to remain bounded.}  
{Nevertheless, each iteration of proximal ALM requires solving a nonconvex proximal subproblem to sufficient accuracy, leading to a double-loop structure and higher inner-iteration complexity.} 
{Linearized ALMs \cite{jin2022stochastic,lu2022single,shi2025momentum} avoid inner-loop minimization by linearizing the AL function, yet they cannot ensure feasibility with a bounded merit parameter, yielding oracle complexities matching those of quadratic penalty methods.} 
{Under suitable conditions, exact penalty formulations permit the merit parameter to remain bounded through the use of nonsmooth penalty terms.} 
{Liu and Xu \cite{liu2025single} employ such an approach to address nonsmooth fully-stochastic problems, achieving (sub)gradient and constraint function oracle complexities of \(O(\epsilon^{-4})\) and \(O(\epsilon^{-6})\), respectively, via variance-reduction techniques.} {Very recently, Cui et al. \cite{cui2025exact} introduce an adaptive exact penalty method for smooth stochastic constrained optimization and report improved theoretical guarantees. Nevertheless, their analysis is restricted to equality-constrained settings, leaving inequality-constrained unexplored.}

\subsection{Contributions}

Our primary goal in this paper  {is to develop} a unified first-order algorithm framework 
for solving nonconvex optimization problems coupled with  equality and inequality constraints, in the deterministic and stochastic settings. The key step is to design a search direction by adopting a decomposition strategy to potentially balance the {objective minimization and constraint satisfaction}, while updating the stepsize and merit parameter adaptively. 
This algorithm helps to balance stationarity and feasibility in a structured way, leading to state-of-the-art complexity bounds for deterministic and stochastic optimization with general constraints.

{We first propose an adaptive directional decomposition method, Algorithm \ref{alg1}, for solving deterministic nonconvex equality-constrained problems, which serves as the foundational building block for subsequent algorithmic designs.}
Inspired by null space methods \cite{nocedal2006numerical,schechtman2023orthogonal}, Algorithm \ref{alg1} incorporates ideas from manifold optimization by {decomposing the search space into tangent and normal spaces of constraints to determine the search  direction. Specifically,   the gradient of the objective function is projected onto the tangent space to determine the first component direction. On the other hand, to minimize constraint violations the second component direction is determined by the gradient of the weighted  constraint violation, relying on a user-defined mapping.} Unlike \cite{schechtman2023orthogonal}, however,  we provide a novel perspective on the algorithm, demonstrating that with a   choice of mapping the algorithm can correspond to either a linearized ALM or an SQP method. Furthermore, while \cite{schechtman2023orthogonal} relies on a pre-determined, sufficiently large merit parameter and a fixed stepsize, our approach introduces an adaptive variant. This adaptive scheme dynamically updates both the merit parameter and stepsize based on the relationship between the descent of objective function  and the decrease of constraint violation, potentially improving the algorithm's practical  performance. In addition, we establish the global convergence of the method and prove an iteration complexity bound of $O(\epsilon^{-2})$ under the strong Linear Independence Constraint Qualification (strong LICQ, see Assumption \ref{ass:cq}), which matches the best-known complexity results in the existing literature.

We then extend the foundational method to address deterministic optimization problem \eqref{p}  with both equality and inequality constraints, but this extension is not trivial. The challenge arises because we cannot derive an explicit expression for projecting the objective gradient onto the feasible direction {set}, posing significant difficulties for subsequent complexity analysis. 
{By building a novel  subproblem to compute a descent direction,} we ensure that inequality constraints are satisfied throughout the iteration process, allowing us to focus solely on the tangent space of the equality constraint Jacobian and the violation of equality constraints. Unlike SQP methods \cite{curtis2024sequential}, however, our subproblem explicitly incorporates the requirement for the direction {in the normal space as a constraint}. This modification enables us to establish sufficient descent for {equality} constraints' violation under the strong Mangasarian-Fromovitz Constraint Qualification (strong MFCQ, see Assumption \ref{ass:mfcq}), a result that is directly assumed in SQP methods \cite{curtis2024sequential}. Furthermore, thanks to this sufficient descent in constraint violation, we prove  the convergence of the algorithm to a {KKT point} and derive its iteration complexity bound of \(O(\epsilon^{-2})\) {to an $\epsilon$-KKT point} under the strong MFCQ.

Finally, we address the  stochastic nonconvex constrained optimization problem \eqref{p}-\eqref{inq-cons}. 
The stochastic setting introduces new challenges, as the operation and analysis of previous algorithms rely on the validity of the strong MFCQ, which {may not be satisfied by stochastic gradients}. We first impose a stochastic {variant of} strong MFCQ assumption (see Assumption \ref{ass:estimates})  and analyze the behavior of stochastic adaptive directional decomposition method based on perturbation theory. Subsequently, we propose two specific approaches: mini-batch approach and recursive momentum approach  that can ensure the stochastic strong MFCQ in a high-probability sense. 
For the mini-batch approach we establish that the  gradient and function value oracle complexity is in order \(\tilde O(\epsilon^{-4}, \epsilon^{-6})\), while for the recursive momentum approach it is in order \(\tilde O(\epsilon^{-3}, \epsilon^{-5})\) when assuming the sample-wise smoothness.  
To the best of our knowledge, these complexity results are novel and achieve state-of-the-art performance under the same setting in the literature. Our results also include oracle complexity for various combinations of objective and constraint types, as presented in the Table \ref{table:results}. Notably, even in the semi-stochastic case  with  stochastic objective and deterministic constraints, our algorithm retains {state-of-the-art} oracle complexity, matching specialized methods \cite{idrees2024constrained,lu2024variance,shi2025momentum} without sacrificing generality.

\begin{table}[htbp]
\centering
\caption{Oracle complexity {under different settings}.}\label{table:results}
\begin{tabular}{ccc|ccc|c}
\toprule
\multicolumn{3}{c|}{\textbf{Settings}} & \multicolumn{3}{c|}{\textbf{Oracle complexity}} & \multirow{2}{*}{\textbf{Ref.}}\\ \cmidrule(lr){1-3} \cmidrule(lr){4-6}
obj. grad. & con. grad. &  con. fun. & obj. grad. & con. grad. & con. fun. & \\ 
\midrule
det. & det. & det. & $O(\epsilon^{-2})$ & $O(\epsilon^{-2})$ & $O(\epsilon^{-2})$ & Thm. \ref{cor:det-complexity}, \ref{cor:det-complexity-2} \\
sto. & det. & det. & {$\tilde O(\epsilon^{-4})$} & $O(\epsilon^{-2})$ & $O(\epsilon^{-2})$ & Rem. \ref{rm:semi-mb}\\
sto.\&\,sws. & det. & det. & $\tilde O(\epsilon^{-3})$ & $O(\epsilon^{-2})$ & $O(\epsilon^{-2})$ & Rem. \ref{rm:semi-rm}\\
sto. & sto. & sto. & $\tilde O(\epsilon^{-4})$ & $\tilde O(\epsilon^{-4})$ & $\tilde O(\epsilon^{-6})$ & Thm.  \ref{thm:complexity-mb}\\
sto.\&\,sws. & sto.\&\,sws. & sto.\&\,sws. & $\tilde O(\epsilon^{-3})$ & $\tilde O(\epsilon^{-3})$ & $\tilde O(\epsilon^{-5})$ &Thm.  \ref{thm:complexity-rm}\\
\bottomrule
\end{tabular}

\smallskip
\parbox{\linewidth}{\footnotesize
We use the following abbreviations: obj.  (objective), con. (constraint), sto.  (stochastic), det.  (deterministic), {grad. (gradient or stochastic gradient), fun. (function value or stochastic function value), and} {sws. (sample-wise smooth, see Assumption \ref{ass:ms})}. One can observe that the complexity of each oracle is only dependent on the properties of its corresponding function. Besides, when deterministic information is available, the oracle  complexity is  same as the iteration complexity of the associated algorithm, while in the stochastic case $\tilde O(\cdot)$ corresponds to high-probability complexity order. 
}
\end{table}

\subsection{Outline}
Section \ref{sec:basic} presents notations, fundamental concepts in constrained optimization and the basic assumptions used in this paper. Section \ref{sec:base-method} first introduces an adaptive directional decomposition  method for deterministic nonconvex optimization problems with equality constraints, and then extends the algorithm to problems that include inequality constraints. Section \ref{sec:full-sto-method} further generalizes the algorithm to address fully-stochastic problems, {with brief discussions on semi-stochastic problems}. Section \ref{sec:num-sim} evaluates the practical performance of the algorithm. Finally, Section \ref{sec:summary} provides a summary for this work. 

\section{Preliminaries and assumptions}\label{sec:basic}

In this work, we adopt the notation defined below:
the Euclidean norm by $\|x\|$ (induced operator norm for matrices); 
transpose by $^\top$; gradient of $f$ at $x$ by $\nabla f(x)$; Jacobian of vector $c$ by $\nabla c(x)$ with columns $\nabla c_i(x)$;  minimum/maximum eigenvalues by $\lambda_{\min}(M)$/$\lambda_{\max}(M)$ for a symmetric matrix \(M\); 
$d$-dimensional identity matrix by $I_d$; $d$-dimensional nonnegative vector set by $\R^d_{\ge 0}$; {projection operator onto a closed set $V$ by $\rm {\rm P}_V$;} expectation w.r.t. the associated random variables by $\mathbb{E}[\cdot]$; 
big-$O$ by $O(\cdot)$ (hiding constants) and $\tilde{O}(\cdot)$ (hiding  logarithmic factors); sequences by ${x_k}$ for iteration index ${k}$; abbreviations $\nabla f_k, \nabla c_k, c_k$ for $\nabla f(x_k), \nabla c(x_k), c(x_k)$, and estimates by tildes (e.g., $\tilde{\nabla} f_k$).  

{This work studies algorithms for \eqref{p} to obtain its approximate solutions, as defined below.}

\begin{definition}\label{def-1-o}
{Given \(\epsilon \ge 0\),} we call $x\ge0$  an $\epsilon$-KKT point of \eqref{p}, if there exist $\lambda\in\R^m$ and \(\mu \in \R^d_{\ge 0}\) such that 
\be\label{FO-approx}
\|\nabla f(x) + \nabla c(x)\lambda - \mu\| \le \epsilon,~ \|c(x)\| \le \epsilon,~|\mu^\top x| \leq \epsilon.
\ee
{In particular, if \(\epsilon = 0\),  $x$ is called a KKT point of \eqref{p}.}
\end{definition}

\begin{definition}\label{def-inf-sta}
{Given \(\epsilon \ge 0\),} we call $x\ge0 $ an $\epsilon$-{infeasible} stationary point of \eqref{p}, if {\(\|c(x) \| > \epsilon\) and} \(x\) is an $\epsilon$-KKT point of the feasibility problem \(\min_{x \geq 0} \frac{1}{2}\|c(x)\|^2\), i.e., there exists \(\mu \in \R^d_{\ge 0}\) such that 
\be
\|\nabla c(x)c(x) - \mu\| \le \epsilon,  ~|\mu^\top x| \leq \epsilon.
\ee
In particular, if \(\epsilon = 0\), \(x\) is called an infeasible stationary point of \eqref{p}.
\end{definition}

We impose the following assumptions on the problem functions, which are standard in the  literature, particularly in those works studying stochastic approximation methods for  nonconvex constrained optimization, such as \cite{curtis2024sequential,curtis2024worst}.
\begin{assumption}\label{ass:bound}
Let $\mathcal X$ be an open convex set that contains {$\{x_k\}$} generated by {the associated algorithm},  $f$ is level bounded and lower bounded by $f_{\rm low}$, and there  exists  $C>0 $ such that  $\|c(x)\|\le C$ for any $x\in\mathcal X$. 
\end{assumption}

\begin{assumption}\label{ass:basic}
Both $f$ and $c_i, i=1,\ldots,m$ are continuously differentiable. Moreover, there exist positive constants $L_f$, $L_g^f$, $L_c$ and $L_g^c$ such that 
 \begin{align*}
\|\nabla f(x)\|\le L_f, &\quad \|\nabla f(x)-\nabla f(y)\|\le L_g^f\|x-y\|,\\
\|\nabla c(x)\|\le L_c, &\quad \|\nabla c(x)-\nabla c(y)\|\le L_g^c\|x-y\|, \quad \forall x,y \in \mathcal X.
\end{align*}
\end{assumption}

\begin{assumption}\label{ass:cq}
{\rm (Strong LICQ)} There exists a positive constant \(\nu\) such that for any $x\in\mathcal X$, the singular values of \(\nabla c(x)\) are bounded below by \(\nu\).
\end{assumption}

\section{Adaptive directional decomposition methods for nonconvex  deterministic constrained optimization}\label{sec:base-method}

We will study  adaptive directional decomposition methods for the deterministic optimization problem \eqref{p} in this section. We will first present the algorithm for equality-constrained problem, with global  convergence as well as iteration complexity analysis. {For general problems with inequality constraints, the extension of the algorithm is challenging, due to the difficulty of explicitly identifying a set of orthogonal directions.} We will give  thorough explorations in {the second} subsection.
\subsection{Equality-constrained case}\label{ssec:eqc}
In this section, we consider the equality-constrained optimization problems and outline the design rationale for the adaptive directional decomposition method, covering the search direction, merit parameter, and stepsize selection. The deterministic equality-constrained optimization problems are of the form
\be\label{p1}
\begin{aligned}
    \min_{x \in \R^d} &\quad f(x) \\
   \text{s.t.} & \quad~c(x)=0,
\end{aligned}
\ee
where, with a little abuse of notation, $f$ and $c$ follow the same settings as \eqref{p}. A point $x\in\R^d$ is a KKT point of \eqref{p1}, if there exists a Lagrange multiplier vector \(\lambda\in \R^m\) such that
\begin{align}\label{ekkt-eq}
    \nabla f(x) + \nabla c(x) \lambda=0  \quad\mbox{and}\quad c(x)=0.
\end{align}
And given $\epsilon>0$, a point $x\in\R^d$ is called an $\epsilon$-KKT point of \eqref{p1}, if there exists $\lambda\in\R^m$ such that 
\[
\|\nabla f(x) + \nabla c(x) \lambda\|\le \epsilon\quad\mbox{and}\quad \|c(x)\|\le \epsilon.
\]
For constrained optimization problems, a widely-used merit function is defined as 
\[
\phi_\rho(x) = f(x) + \rho \|c(x)\|,
\]
where \(\rho > 0\) is a merit parameter. This merit function combines the objective function with the constraint violation measured in the $\ell_2$ norm, guiding the algorithm to make progress towards reducing the objective function value while ensuring constraint satisfaction. 
At current iterate $x$, we first need to determine a search direction $s(x)$ along which we can locate a new point ensuring sufficient descent of $\phi_\rho$. To achieve this, we seek the search direction  based on a decomposition strategy by independently addressing the reduction of the objective function and the minimization of constraint violations. First, for any \( x \in \mathbb{R}^d \), we define a vector space \( V(x) = \{ v \in \mathbb{R}^d : \nabla c(x)^{\top} v = 0 \} \). Subsequently, the gradient of the objective function, \(\nabla f(x)\), is projected onto \( V(x) \), yielding a direction that reduces the objective function value while remaining tangent to the constraint manifold. Next, we identify a direction within the column space of the constraint Jacobian \(\nabla c(x)\) that reduces the constraint {violations}. {Then we combine these two directions to determine \(s(x)\).} Specifically, the {search direction \(s(x)\)} is defined as:
\begin{align}
s(x) = - {\rm P}_{V(x)} \nabla f(x) - \nabla c(x) A(x) c(x),
\label{eq:orth_field}
\end{align}
where \( {\rm P}_{V(x)} \nabla f(x) \) denotes the orthogonal projection of \(\nabla f(x)\) onto \( V(x) \), and the mapping  \( A:\R^d\to  \mathbb{R}^{m \times m} \) is selected such that
$ \nabla c(x)^{\top} \nabla c(x) A(x) $ is positive definite with eigenvalues lower bounded by a positive constant $\beta$. {Clearly, \( {\rm P}_{V(x)} \nabla f(x) \perp \nabla c(x) A(x) c(x)\).} Furthermore, if  $\nabla c(x)$ is of full {column} rank,  it follows from Projection Formula (Lemma \ref{lm:aff_proj}) that 
\begin{equation}\label{eq:proj_auxtan}
\begin{aligned}
    {\rm P}_{V(x)} \nabla f(x) &= \argmin_{v \in V(x)} \frac{1}{2} \norm{v - \nabla f(x)}^2 \\
    & = \nabla f(x) - \nabla c(x) (\nabla c(x)^{\top} \nabla c(x))^{-1} \nabla c(x)^{\top} \nabla f(x).
\end{aligned}
\end{equation}

This composite {search direction} \(s(x)\) balances objective minimization and constraint satisfaction, and characterizes the KKT conditions for Problem \eqref{p}.
\begin{lemma}\label{lm:OF_crit}
{Under Assumption \ref{ass:cq},} if $s(x) = 0$, then $x$ is a KKT point of Problem~\eqref{p1}. Furthermore, if the the minimal singular value of $\nabla c(x) A(x)$, denoted by $\delta$, is positive and $\norm{s(x)} \leq \epsilon$ for a given $\epsilon>0$, then $\norm{{\rm P}_{V(x)} \nabla f(x)} \leq \epsilon$ and $\norm{c(x)} \leq \delta^{-1}\epsilon$. 
\end{lemma}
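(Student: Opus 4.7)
The plan is to exploit the orthogonal decomposition of $s(x)$ into its tangential component $-{\rm P}_{V(x)}\nabla f(x)$, which lies in $V(x)=\ker(\nabla c(x)^\top)$, and its normal component $-\nabla c(x)A(x)c(x)$, which lies in the column space of $\nabla c(x)$. By the stated orthogonality ${\rm P}_{V(x)}\nabla f(x)\perp \nabla c(x)A(x)c(x)$, the Pythagorean identity yields
\begin{equation*}
\|s(x)\|^{2}=\|{\rm P}_{V(x)}\nabla f(x)\|^{2}+\|\nabla c(x)A(x)c(x)\|^{2}.
\end{equation*}
This single identity drives both claims, since it shows that any bound on $\|s(x)\|$ transfers to each component separately.

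For the first claim, $s(x)=0$ forces both components to vanish. From ${\rm P}_{V(x)}\nabla f(x)=0$, the vector $\nabla f(x)$ is orthogonal to $V(x)$ and therefore lies in the column space of $\nabla c(x)$; under the strong LICQ assumption (so $\nabla c(x)$ has full column rank), I can extract a unique $\lambda\in\mathbb{R}^{m}$ with $\nabla f(x)+\nabla c(x)\lambda=0$, explicitly given by the formula in \eqref{eq:proj_auxtan}, namely $\lambda=-(\nabla c(x)^\top\nabla c(x))^{-1}\nabla c(x)^\top\nabla f(x)$. From $\nabla c(x)A(x)c(x)=0$, I multiply on the left by $\nabla c(x)^\top$ to obtain $\nabla c(x)^\top\nabla c(x)A(x)\,c(x)=0$; since $\nabla c(x)^\top\nabla c(x)A(x)$ has eigenvalues bounded below by $\beta>0$, it is invertible and hence $c(x)=0$. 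Combined, these give the KKT system \eqref{ekkt-eq}.

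For the second claim, the Pythagorean identity immediately gives $\|{\rm P}_{V(x)}\nabla f(x)\|\le\|s(x)\|\le\epsilon$ and $\|\nabla c(x)A(x)c(x)\|\le\epsilon$. To convert the latter into a bound on $\|c(x)\|$, I use that the minimal singular value of the matrix $B:=\nabla c(x)A(x)$ is $\delta>0$, which means $B$ is injective and $\|Bv\|\ge\delta\|v\|$ for all $v\in\mathbb{R}^{m}$ (since $\|Bv\|^{2}=v^\top B^\top B v\ge\lambda_{\min}(B^\top B)\|v\|^{2}=\delta^{2}\|v\|^{2}$). Applying this with $v=c(x)$ yields $\|c(x)\|\le\delta^{-1}\|\nabla c(x)A(x)c(x)\|\le\delta^{-1}\epsilon$, completing the proof. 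No step here is subtle; the only point that requires a moment of care is justifying the singular-value-to-norm inequality for the rectangular matrix $B$, which is why I prefer to derive it explicitly from $B^\top B$ rather than invoke it as a black box.
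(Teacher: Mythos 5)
Your proof is correct and follows essentially the same route the paper uses (the paper states this lemma without a displayed proof, but the identical argument — orthogonality of the two components, hence Pythagoras, the explicit projection formula \eqref{eq:proj_auxtan} to produce $\lambda$, and the minimal-singular-value bound $\|\nabla c(x)A(x)c(x)\|\ge\delta\|c(x)\|$ — appears verbatim in the proof of Theorem~\ref{cor:det-convergence}). No gaps.
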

\begin{remark}\label{rm:OF_crit}
     With \(\lambda = - (\nabla c(x)^{\top} \nabla c(x))^{-1} \nabla c(x)^{\top} \nabla f(x)\), then the condition $\|{\rm P}_{V(x)} \nabla f(x)\| \leq \epsilon$ implies\\ $\norm{\nabla f(x) + \nabla c(x) \lambda} \leq \epsilon$.
\end{remark}

At current iterate \(x_k\), after computing search direction {\(s_k=s(x_k)\)}, i.e.,
\be\label{s_k}
s_k = -{\rm P}_{V_k}\nabla f_k - \nabla c_k A_k c_k,
\ee
we examine the linearized constraint term $\|c_k + \eta_k \nabla c_k^\top s_k\|$, where $\eta_k $ is a stepsize along $s_k$. Given the definition of $s_k$ and the selection of  $A$, we {notice} that $\|c_k + \eta_k \nabla c_k^\top s_k\| \leq (1 - \eta_k \beta) \|c_k\|$, when $\eta_k \beta \in (0, 1)$. Our next task is to determine a suitable stepsize $\eta_k$ such that the merit function is reduced at the new point $x_k+\eta_ks_k$. To compute this stepsize we first identify the merit parameter  $\rho_k$, which has the potential to induce a reduction of $\phi_{\rho_k}$ along $s_k$. At current point $x$, and given a stepsize $\eta$ and a search direction $s$, a quadratic regularized approximation to   $\phi_{\rho}$ at $x+\eta s$ is given by  
\be\label{approx}
l_{\rho}(x,s,\eta) = f(x) + \eta \nabla f(x)^\top s + \rho \|c(x) + \eta \nabla c(x)^\top s\| + \frac{\eta}{2}\|s\|^2,
\ee
which is built upon a quadratic approximation to   $f(x+\eta s)$ and the linearization to $c(x+\eta s)$. When $s$ or $\eta$ is sufficiently small, the approximation    will be more accurate. Obviously, we have $ l_{\rho} (x,0,0)= f(x) + \rho \| c(x)\|.$  
To induce the potential reduction of the merit function $\phi_\rho$ at $x_k+\eta_ks_k$, we hope at least the merit parameter $\rho=\rho_k$ can induce the reduction of the approximation model, that is
\[l_{\rho_k}(x_k, s_k,\eta_k) \leq l_{\rho_k} (x_k,0,0) ,\] which can be guaranteed provided that $\rho_k$ satisfies the following relation
\be\label{ine}
    \nabla f_k^\top s_k + \frac{1}{2}\|s_k\|^2 - \rho_k \beta \|c_k\| \leq 0.
\ee
Inspired by this, to ensure the stability of the algorithm's convergence process we adopt a monotonically non-decreasing strategy to update the merit parameter through
\begin{align}\label{eq:rhok}
    \rho_k = \begin{cases}\max \left\{ \frac{\nabla f_k^\top s_k + \frac{1}{2}\|s_k\|^2}{\beta \|c_k\|}, \rho_{k-1} \right\}, &\quad \mbox{if } c_k\neq 0,\\
    \rho_{k-1}, &\quad \mbox{o.w.}
    \end{cases}
\end{align}
If $\nabla c_k$ has full column rank for all $k\ge0$, it holds that
\be \label{eq:rhomax}
\begin{aligned}
    &\nabla f_k^\top s_k + \frac{1}{2}\|s_k\|^2 \\
    &\leq (\nabla f_k + s_k)^\top s_k \overset{\eqref{eq:orth_field},\eqref{eq:proj_auxtan}}{=} (\nabla c_k (\nabla c_k^{\top} \nabla c_k)^{-1} \nabla c_k^{\top} \nabla f_k - \nabla c_k A_k c_k)^\top s_k\\
    & = ( (\nabla c_k^\top \nabla c_k)^{-1} \nabla c_k^\top \nabla f_k - A_k c_k)^\top \nabla c_k^\top s_k\\
    & = (\nabla c_k A_k c_k)^\top \nabla c_k A_k c_k - \nabla f_k^\top \nabla c_k A_k c_k \qquad ({\rm since}~\nabla c_k^\top {\rm P}_{V_k} \nabla f_k = 0)\\
    & \leq \|(\nabla c_k A_k c_k)^\top \nabla c_k A_k - \nabla f_k^\top \nabla c_k A_k \| \|c_k\| \leq M_1 \|c_k\| 
\end{aligned}\ee
for all $k\ge0$, 
where \(M_1 := \sup_{k \ge 0} \{\|(\nabla c_k A_k c_k)^\top \nabla c_k A_k - \nabla f_k^\top \nabla c_k A_k \|\}\). In particular, when $c_k=0$, \eqref{eq:rhomax} implies $\nabla f_k^\top s_k + \frac{1}{2}\|s_k\|^2\le 0$, which together with \eqref{eq:rhok}  guarantees \eqref{ine}. 
The remainder is to determine the stepsize $\eta_k$.  
Suppose that the stepsize $\eta_k$ is chosen to satisfy the following two conditions: 
(a) \(\|c_k + \eta_k \nabla c_k^\top s_k\| \leq (1 - \eta_k \beta) \|c_k\|\) and 
(b) \(\eta_k (L_g^f + \rho_k L_g^c) \leq 1\), then we can derive
\be\begin{aligned}\label{eq:merit-descent}
   & \phi_{\rho_k}(x_k + \eta_k s_k)\\ &= f(x_k+\eta_k s_k) + \rho_k \|c(x_k+\eta_k s_k)\|\\
    &\leq f_k + \rho_k \|c_k + \eta_k \nabla c_k^\top s_k \| + \eta_k \nabla f_k^\top s_k + \frac{\eta_k^2}{2} (L_g^f+\rho_k L_g^c)\|s_k\|^2\\
    &\!\overset{(a)}{\leq} f_k + \rho_k \|c_k \| - \rho_k \eta_k \beta \| c_k \| + \eta_k \nabla f_k^\top s_k + \frac{\eta_k^2}{2} (L_g^f+\rho_k L_g^c)\|s_k\|^2\\
    &\!\!\!\!\!\overset{(b),\eqref{eq:rhok}}{\leq}\!\!\!\! f_k + \rho_k \|c_k \| = \phi_{\rho_k}(x_k) ,
\end{aligned}\ee
where the first inequality leverages the smoothness of the objective and constraint functions that 
\begin{align*}
    f(x_k+\eta_k s_k) &\leq f_k + \eta_k \nabla f_k^\top s_k + \frac{L_g^f \eta_k^2}{2} \|s_k\|^2,\\
    \|c(x_k+\eta_k s_k)\| &\leq \|c_k + \eta_k \nabla c_k^\top s_k \| + \frac{L_g^c \eta_k^2}{2} \|s_k\|^2.
\end{align*}
Hence, under conditions (a) and (b) for the stepsize $\eta_k$, the descent of merit function can be ensured. Then 
the remaining question is how to guarantee conditions (a) and (b). 
If the matrix \(\nabla c_k^\top \nabla c_kA_k\) is symmetric and \(\eta_k \|\nabla c_k^\top \nabla c_kA_k\| \leq 1\),  the condition \((a)\) holds due to the definition of $s_k$. The condition \((b)\) can be easily realized and also explains why the coefficient of the quadratic term in the  approximation model \eqref{approx} is set to $\frac{\eta}{2}$. Hence, to achieve conditions \((a)\) and \((b)\), we can set the stepsize as
\begin{align}\label{etak}
    \eta_k = \min \left\{\frac{\tau}{L_g^f + \rho_k L_g^c}, \frac{1}{\| \nabla c_k^\top \nabla c_k A_k\|}\right\}~{\rm with}~\tau \in (0,1),
\end{align}
where $\rho_k$ is computed through \eqref{eq:rhok}.

Having presented the computation of search directions and stepsizes, the algorithm for \eqref{p1} is ready to present in Algorithm \ref{alg1}. 

\begin{algorithm}[H]
  \caption{Adaptive directional decomposition method for equality constrained optimization \eqref{p1}} 
  \label{alg1}
  \begin{algorithmic}[1] 
    \REQUIRE \(x_0, \tau, \rho_0\).
    \FOR{$k=0,1,2\ldots$ } 
    \STATE Compute \(s_k\) via \eqref{s_k}. 
    \IF {\(s_k = 0\)}
        \STATE Return \(x_k\).
    \ELSE
        \STATE Compute \(\eta_k\) via \eqref{etak} with \(\rho_k\)  computed by \eqref{eq:rhok}.
        \STATE Update \(x_{k+1} = x_k + \eta_k s_k\).
    \ENDIF
    \ENDFOR 
  \end{algorithmic} 
\end{algorithm}

Before proceeding, we formalize the assumption imposed on the mapping \(A:\R^d\to \R^{m\times m}\) as follows.
\begin{assumption}\label{ass:map-A}
For any $x\in\mathcal X$, the matrix $\nabla c(x)^\top \nabla c(x) A(x)$ is symmetric. Moreover, there exist constants \(\beta, \bar\beta  > 0\) such that  $\lambda_{\rm min}(\nabla c(x)^\top \nabla c(x) A(x)) \geq \beta$ and $\|A(x)\| \leq \bar\beta $ for all \(x \in \mathcal X\).
\end{assumption}

Under Assumptions \ref{ass:bound}-\ref{ass:map-A}, together with \eqref{eq:rhomax}, it is easy to obtain that $M_1\le L_c^2C\bar\beta^2 + L_fL_c \bar\beta$, thus 
\begin{align}\label{rhomax}\rho_k \leq \rho_{\rm max}:=\max \left\{\frac{L_c^2C\bar\beta^2 + L_fL_c \bar\beta}{\beta},\rho_0\right\}
\end{align} 
and 
\begin{align}\label{bound-eta}
 \eta_k \in [\eta_{\rm min}, \eta_{\rm max}]  \mbox{ with }  \eta_{\rm min} = \min \left\{\frac{\tau}{L_g^f + \rho_{\rm max} L_g^c}, {\frac{1}{L_c^2\bar\beta}}\right\} ~{\rm and}~\eta_{\rm max} = \frac{\tau}{L_g^f + \rho_0 L_g^c}
\end{align}
for all $k\ge0.$

We next present several specific forms of operator $A$, ensuring Assumption \ref{ass:map-A}, and 
demonstrate relation of the resulting algorithm to existing ones in the literature.
\begin{itemize}
    \item \(A(x) = \alpha I_m\) with $\alpha>0$.
    
    Under strong LICQ condition and the boundedness of the constraint function's gradient, we can readily establish the existence of $\beta$ and $\bar\beta$ to satisfy Assumption \ref{ass:map-A}. We will demonstrate that the resulting  algorithm under this choice is closely related to existing  linearized ALMs. We consider the following augmented Lagrangian function \cite[Section 4.3.2]{bertsekas2014constrained}:
    \[
    L(x,\lambda) = f(x) + \langle \lambda, c(x) \rangle + \frac{\mu}{2}\|c(x)\|^2 + \frac{1}{2}\|M(x)\nabla f(x) + \lambda\|^2
    \]
    with
    $
    M(x) = (\nabla c(x)^{\top} \nabla c(x))^{-1} \nabla c(x)^{\top}.
    $ 
    At current iteration $x_k,$ 
    by letting $\lambda_k$ solve \(\nabla_\lambda L(x_k,\lambda)= 0\) we obtain
    \be\begin{aligned}\label{lalm-lambda}
    \lambda_k = - c_k - (\nabla c_k^{\top} \nabla c_k)^{-1} \nabla c_k^{\top} \nabla f_k.
    \end{aligned}\ee
    Meanwhile, Algorithm \ref{alg1} reads
    \be\begin{aligned}\label{lalm-x}
        x_{k+1} &= x_k + \eta_k s_k = x_k - \eta_k \left( \nabla f_k - \nabla c_k (\nabla c_k^{\top} \nabla c_k)^{-1} \nabla c_k^{\top} \nabla f_k + \alpha \nabla c_k c_k\right)\\
        &= x_k - \eta_k \left( \nabla f_k + \nabla c_k (\lambda_k + c_k + \alpha c_k)\right) = x_k - \eta_k \left( \nabla f_k + \nabla c_k (\lambda_k +\mu c_k)\right),
    \end{aligned}\ee
    where the last equality is due to set \(\mu = \alpha + 1\). We observe that the update for the primal variable $x$ in \eqref{lalm-x} aligns with the standard linearized ALM, while the update for the dual variable $\lambda$ in \eqref{lalm-lambda} is similarly investigated in existing ALMs \cite{wang2015augmented,wang2015augmented2}.

    \item \(A(x) = \alpha (\nabla c(x)^{\top} \nabla c(x))^{-1}\) with $\alpha>0$.
    
    In this case, we obtain $\nabla c(x)^\top \nabla c(x) A(x) = \alpha I_m$, which naturally ensures the existence of constants $\beta$ and $\bar\beta $ (by Assumptions \ref{ass:bound} and \ref{ass:cq}) such that Assumption \ref{ass:map-A} holds. We next show that the resulting algorithm  essentially corresponds to an SQP method \cite{curtis2024worst,curtis2024sequential}. First, following the definition of $s_k$ in \eqref{s_k} we obtain 
    $\nabla c_k^\top s_k = -\alpha c_k$. Then letting \(y_k = (\nabla c_k^\top \nabla c_k)^{-1} (\alpha c_k - \nabla c_k^\top \nabla f_k)\) implies \(s_k + \nabla c_k y_k = - \nabla f_k\). Hence, \((s_k,y_k)\) solves the following linear system
    \be\begin{aligned}\label{sqp}
    \begin{bmatrix}
    H_k & \nabla c_k \\
    \nabla c_k^\top & 0
    \end{bmatrix}
    \begin{bmatrix}
    s_k \\
    y_k
    \end{bmatrix}
    =
    -
    \begin{bmatrix}
    \nabla f_k \\
    \alpha c_k
    \end{bmatrix}
    \end{aligned}\ee
    with \(H_k = I_d\). The above linear system \eqref{sqp} is also employed by the first-order SQP method \cite{curtis2024worst} to compute the search direction $s_k$.
\end{itemize}

We note that any mapping $A$ satisfying the specified requirements can be employed to implement the algorithm, beyond the two choices discussed above, {such as the hybrid method with \(A(x) = \alpha_1 I_m + \alpha_2 (\nabla c(x)^{\top} \nabla c(x))^{-1}\).} In the remaining part of this subsection, we will explore the theoretical properties of the unified algorithm framework, without specifying a particular form for  $A.$ 


The following lemma shows that $\{s_k\}$, generated by Algorithm \ref{alg1}, is square-summable. 
\begin{lemma}\label{thm:det-convergence}
    Suppose Assumptions \ref{ass:bound}-\ref{ass:map-A} hold and let $\{s_k\}$ be generated by Algorithm \ref{alg1}. 
    Then it holds that
    \begin{align*}
    \sum_{k=0}^{K-1} \|s_k\|^2 
    \leq \frac{2(f_0 - f_{\rm low}+\rho_{\rm max} C)}{\left(1-\tau\right)\eta_{\rm min} } \quad \mbox{for any } K\ge 1.
    \end{align*}
\end{lemma}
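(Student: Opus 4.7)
The plan is to derive a per-iteration sufficient-decrease inequality for a properly chosen potential function, then telescope and bound the boundary terms using the uniform bounds on $c_k$, $\rho_k$ and $\eta_k$ already collected in \eqref{rhomax}--\eqref{bound-eta}.

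First, I would refine the computation leading to \eqref{eq:merit-descent}. Condition (b) in the derivation was written as $\eta_k(L_g^f+\rho_k L_g^c)\le 1$, but the definition \eqref{etak} actually gives the sharper bound $\eta_k(L_g^f+\rho_k L_g^c)\le \tau$. Plugging this in, together with condition (a) and the defining property \eqref{eq:rhok} of $\rho_k$, which implies $-\rho_k\beta\|c_k\|\le -\nabla f_k^\top s_k-\tfrac12\|s_k\|^2$, the same chain of inequalities now yields the sufficient-decrease estimate
\begin{equation*}
\phi_{\rho_k}(x_{k+1}) \le \phi_{\rho_k}(x_k) - \frac{(1-\tau)\eta_k}{2}\|s_k\|^2.
\end{equation*}
This is the core per-iteration inequality; once it is in place, the rest of the argument is a telescoping bookkeeping exercise.

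Next, I would address the mismatch caused by $\rho_k$ changing with $k$. Since $\{\rho_k\}$ is monotonically non-decreasing by \eqref{eq:rhok}, for every $k$
\begin{equation*}
\phi_{\rho_{k+1}}(x_{k+1}) = \phi_{\rho_k}(x_{k+1}) + (\rho_{k+1}-\rho_k)\|c(x_{k+1})\|,
\end{equation*}
so combining with the sufficient-decrease inequality gives
\begin{equation*}
\phi_{\rho_{k+1}}(x_{k+1}) \le \phi_{\rho_k}(x_k) - \frac{(1-\tau)\eta_k}{2}\|s_k\|^2 + (\rho_{k+1}-\rho_k)\|c(x_{k+1})\|.
\end{equation*}
Summing this telescoping inequality from $k=0$ to $K-1$, using $\|c(x_{k+1})\|\le C$ from Assumption \ref{ass:bound} so that $\sum_{k=0}^{K-1}(\rho_{k+1}-\rho_k)\|c(x_{k+1})\|\le C(\rho_K-\rho_0)$, and using $\phi_{\rho_K}(x_K)\ge f_{\rm low}$ and $\phi_{\rho_0}(x_0)=f_0+\rho_0\|c_0\|\le f_0+\rho_0 C$, the $\rho_0 C$ terms cancel and one is left with
\begin{equation*}
\frac{(1-\tau)}{2}\sum_{k=0}^{K-1}\eta_k\|s_k\|^2 \le f_0 - f_{\rm low} + C\rho_K \le f_0 - f_{\rm low} + \rho_{\rm max} C,
\end{equation*}
where the last step uses \eqref{rhomax}.

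Finally, since $\eta_k \ge \eta_{\rm min}$ by \eqref{bound-eta}, the conclusion follows by dividing both sides by $\tfrac{(1-\tau)\eta_{\rm min}}{2}$. The only real obstacle is conceptual rather than computational: one must notice that the natural Lyapunov sequence $\phi_{\rho_k}(x_k)$ is \emph{not} monotone because $\rho_k$ changes, and that the correct way to telescope is to carry the $(\rho_{k+1}-\rho_k)\|c(x_{k+1})\|$ correction term through the sum and control it by the boundedness of $\|c\|$ together with the already-established uniform upper bound $\rho_{\rm max}$ on $\rho_k$. Everything else is direct substitution.
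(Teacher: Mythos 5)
Your proposal is correct and follows essentially the same route as the paper: establish the per-iteration sufficient decrease $\phi_{\rho_k}(x_{k+1})\le \phi_{\rho_k}(x_k)-\tfrac{(1-\tau)\eta_k}{2}\|s_k\|^2$ via conditions (a), (b) and the definition of $\rho_k$, then telescope and bound using $\|c_k\|\le C$, $\rho_k\le\rho_{\max}$, and $\eta_k\ge\eta_{\min}$. Your explicit handling of the non-monotone potential via the correction term $(\rho_{k+1}-\rho_k)\|c(x_{k+1})\|$ is just a more carefully spelled-out version of the Abel-summation bookkeeping the paper leaves implicit in its final display.
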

\begin{proof}
    From the smoothness of \(f\) and \(c\), it holds that
    \be\begin{aligned}\label{descent}
       & f_{k+1} + \rho_k \|c_{k+1}\| \\
       &\leq f_k + \rho_k \|c_k + \eta_k \nabla c_k^\top s_k\| + \eta_k \nabla f_k^\top s_k + \frac{\eta_k^2 L_g^f}{2}\|s_k\|^2 + \frac{\eta_k^2\rho_kL_g^c}{2}\|s_k\|^2\\
        &\leq f_k + \rho_k \|c_k + \eta_k \nabla c_k^\top s_k\| +  \eta_k \nabla f_k^\top s_k + \frac{\eta_k \tau}{2}\|s_k\|^2\\ 
        &\leq f_k + \rho_k \|c_k + \eta_k \nabla c_k^\top s_k\| +  \eta_k \rho_k \beta \|c_k\| - \frac{\eta_k\left(1-\tau\right)}{2}\|s_k\|^2\\
        &\leq f_k + \rho_k (1-\eta_k \beta)\|c_k\| + \eta_k \rho_k \beta \|c_k\| - \frac{\eta_k\left(1-\tau\right)}{2}\|s_k\|^2\\
        &= f_k + \rho_k \|c_k\|- \frac{\eta_k\left(1-\tau\right)}{2}\|s_k\|^2,
    \end{aligned}\ee
    where the inequalities follow from  the settings of \(\eta_k\) and \(\rho_k\), as well as \eqref{eq:merit-descent}. 
    Summing it from \(k=0\) to \(K-1\) and then rearranging the terms yields  
    \begin{align*}
     \sum_{k=0}^{K-1} \frac{1-\tau}{2} \|s_k\|^2 \leq \frac{1}{\eta_{\rm min} } \sum_{k=0}^{K-1} \left(f_k+\rho_k \|c_k\| - f_{k+1} - \rho_k \|c_{k+1}\|\right)
    \leq \frac{f_0 - f_{\rm low}+\rho_{\rm max} C}{\eta_{\rm min} }.
    \end{align*}
    The desired proof is completed.
\end{proof}

By Lemmas \ref{lm:OF_crit} and \ref{thm:det-convergence} 
as well as Remark~\ref{rm:OF_crit}, we can  obtain the global convergence of Algorithm \ref{alg1} and its iteration complexity for finding an $\epsilon$-KKT point of \eqref{p1} in the following two theorems, respectively.
\begin{theorem}[{\bf Global convergence of Algorithm \ref{alg1}}]\label{cor:det-convergence}
    Under the same conditions as Lemma \ref{thm:det-convergence}, suppose  singular values of \(\nabla c_k A_k\) are greater than \(\delta > 0\), then with \(\lambda_k = - (\nabla c_k^{\top} \nabla c_k)^{-1} \nabla c_k^{\top} \nabla f_k\), \(k \ge 0\), it holds that
    \begin{equation*}
    \lim_{k \to \infty} \left(\|\nabla f_k + \nabla c_k \lambda_k\|^2 + \|c_k\|^2 \right) = 0.
    \end{equation*}
\end{theorem}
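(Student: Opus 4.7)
The plan is to combine the square-summability result from Lemma \ref{thm:det-convergence} with the decomposition identity for $s_k$ and the singular-value lower bound on $\nabla c_k A_k$ to drive each term of $\|\nabla f_k + \nabla c_k \lambda_k\|^2 + \|c_k\|^2$ to zero individually. Since Lemma \ref{thm:det-convergence} gives $\sum_{k=0}^{\infty} \|s_k\|^2 < \infty$, in particular $\|s_k\| \to 0$ as $k \to \infty$; so the whole task reduces to bounding the two quantities of interest in terms of $\|s_k\|$.

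First I would invoke the orthogonality built into the construction of $s_k$. By definition \eqref{s_k}, $s_k = -{\rm P}_{V_k}\nabla f_k - \nabla c_k A_k c_k$, and the two summands lie in the orthogonal subspaces $V_k$ and the column space of $\nabla c_k$, respectively. Hence by the Pythagorean identity,
\begin{equation*}
\|s_k\|^2 = \|{\rm P}_{V_k}\nabla f_k\|^2 + \|\nabla c_k A_k c_k\|^2,
\end{equation*}
so both terms on the right individually tend to zero.

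Next I would identify each term with the quantity appearing in the theorem. For the first term, Remark \ref{rm:OF_crit} tells us that with $\lambda_k = -(\nabla c_k^\top \nabla c_k)^{-1}\nabla c_k^\top \nabla f_k$ we have exactly ${\rm P}_{V_k}\nabla f_k = \nabla f_k + \nabla c_k \lambda_k$, giving $\|\nabla f_k + \nabla c_k \lambda_k\| = \|{\rm P}_{V_k}\nabla f_k\| \to 0$. For the second term, the assumed lower bound $\delta$ on the singular values of $\nabla c_k A_k$ yields $\|\nabla c_k A_k c_k\| \ge \delta \|c_k\|$, whence $\|c_k\| \le \delta^{-1}\|\nabla c_k A_k c_k\| \to 0$.

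Adding the two limits gives the desired conclusion. There is no real obstacle here beyond verifying the orthogonality and the bookkeeping for $\lambda_k$; both are already explicitly recorded in Lemma \ref{lm:OF_crit} and Remark \ref{rm:OF_crit}, so the proof is essentially a direct assembly of the preceding lemmas.
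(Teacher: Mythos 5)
Your proposal is correct and follows essentially the same route as the paper's proof: both rely on the square-summability of $\{\|s_k\|^2\}$ from Lemma \ref{thm:det-convergence}, the orthogonality of ${\rm P}_{V_k}\nabla f_k = \nabla f_k + \nabla c_k\lambda_k$ and $\nabla c_k A_k c_k$ to conclude each component vanishes, and the singular-value bound $\|c_k\|\le\delta^{-1}\|\nabla c_k A_k c_k\|$. Your explicit use of the Pythagorean identity merely makes the paper's implicit orthogonality argument more concrete.
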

\begin{proof}
    From the settings of \(s_k\) and \(\lambda_k\), we have \(s_k = - \nabla f_k - \nabla c_k \lambda_k - \nabla c_k A_k c_k\) and \(\nabla f_k + \nabla c_k \lambda_k \perp \nabla c_k A_k c_k\). Lemma  \ref{thm:det-convergence} shows that \(\{\|s_k\|^2\}\) is summable, which implies \(s_k \to 0\) as $k\to\infty$. Then we have \(\nabla f_k + \nabla c_k \lambda_k \to 0\) and \(\nabla c_k A_k c_k \to 0\). Moreover, since the minimal singular value of \(\nabla c_k A_k\) is greater than \(\delta \), it follows that \(\|c_k\| \leq \delta^{-1} \|\nabla c_k A_k c_k\| \to 0\) as $k\to \infty$. The conclusions are derived.
\end{proof}



\begin{theorem}[{\bf Iteration complexity of Algorithm \ref{alg1}}]\label{cor:det-complexity}
    Under the same conditions as Lemma \ref{thm:det-convergence}, suppose   singular values of \(\nabla c_k A_k\) are greater than \(\delta > 0\), then for any \(\epsilon \in (0,1)\), Algorithm \ref{alg1} reaches an \(\epsilon\)-KKT point  of problem \eqref{p1} within $K$ iterations, where 
    $K=
    2(1-\tau)^{-1}(\frac{f_0-f_{\rm low}+\rho_{\rm max}C}{\eta_{\rm min}\cdot \min\left\{\delta^2,1\right\}})\epsilon^{-2}.
    $
\end{theorem}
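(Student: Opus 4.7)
The plan is to combine Lemma \ref{lm:OF_crit} (and Remark \ref{rm:OF_crit}) with the summability bound of Lemma \ref{thm:det-convergence} via a pigeonhole argument on $\{\|s_k\|^2\}$.

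First I would translate the $\epsilon$-KKT requirement into a threshold on $\|s_k\|$. By Lemma \ref{lm:OF_crit}, whenever $\|s_k\| \le \epsilon'$ with $\epsilon'>0$, we get $\|{\rm P}_{V_k}\nabla f_k\| \le \epsilon'$ and $\|c_k\|\le \delta^{-1}\epsilon'$. Using the multiplier $\lambda_k = -(\nabla c_k^\top\nabla c_k)^{-1}\nabla c_k^\top \nabla f_k$ as in Remark \ref{rm:OF_crit}, this upgrades to $\|\nabla f_k + \nabla c_k\lambda_k\| \le \epsilon'$. To force both $\|\nabla f_k + \nabla c_k\lambda_k\| \le \epsilon$ and $\|c_k\| \le \epsilon$ simultaneously, I would pick $\epsilon' = \epsilon\min\{\delta,1\}$, i.e., I would require
\[
\|s_k\|^2 \le \epsilon^2 \min\{\delta^2,1\}.
\]

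Next I would apply the pigeonhole principle to Lemma \ref{thm:det-convergence}. Suppose, for contradiction, that $\|s_k\|^2 > \epsilon^2\min\{\delta^2,1\}$ for every $k=0,1,\dots,K-1$. Then
\[
K\,\epsilon^2 \min\{\delta^2,1\} < \sum_{k=0}^{K-1}\|s_k\|^2 \le \frac{2(f_0 - f_{\rm low} + \rho_{\rm max} C)}{(1-\tau)\eta_{\rm min}},
\]
which rearranges to
\[
K < \frac{2(f_0 - f_{\rm low} + \rho_{\rm max} C)}{(1-\tau)\eta_{\rm min}\,\min\{\delta^2,1\}}\,\epsilon^{-2}.
\]
Choosing $K$ equal to the right-hand side of this inequality therefore guarantees the existence of at least one iterate $k \in \{0,\dots,K-1\}$ at which $\|s_k\|^2 \le \epsilon^2\min\{\delta^2,1\}$, which by the first step is an $\epsilon$-KKT point of \eqref{p1}.

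There is no real obstacle here; all the heavy lifting (the descent inequality \eqref{eq:merit-descent}, the uniform bounds $\rho_k\le\rho_{\rm max}$ and $\eta_k\ge\eta_{\rm min}$ from \eqref{rhomax}--\eqref{bound-eta}, and the orthogonality giving $\|c_k\|\le\delta^{-1}\|s_k\|$) has already been done in the preceding lemmas. The only minor care needed is to pick the right threshold on $\|s_k\|$ so that the factor $\min\{\delta^2,1\}$ in the denominator of $K$ emerges correctly, balancing the stationarity requirement (which needs $\|s_k\|\le\epsilon$) against the feasibility requirement (which needs $\delta^{-1}\|s_k\|\le\epsilon$, i.e., $\|s_k\|\le\delta\epsilon$).
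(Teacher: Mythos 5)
Your proposal is correct and follows essentially the same route as the paper's proof: an averaging/pigeonhole argument on $\sum_{k}\|s_k\|^2$ from Lemma \ref{thm:det-convergence}, with the threshold $\|s_k\|^2\le\min\{\delta^2,1\}\epsilon^2$ chosen so that Lemma \ref{lm:OF_crit} and Remark \ref{rm:OF_crit} deliver both the stationarity and feasibility bounds. No gaps.
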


\begin{proof}
    It follows from the expression of $K$ and Lemma \ref{thm:det-convergence} that \(\frac{1}{K}\sum_{k=0}^{K-1} \|s_k\|^2 \leq \min\{\delta^2,1\} \epsilon^2\). Therefore, there exists an iteration \(k \leq K-1\) such that \(\|s_k\|^2 \leq \min\{\delta^2,1\} \epsilon^2\). Then by Lemma \ref{lm:OF_crit},  $
    \|\nabla f_k + \nabla c_k \lambda_k\| \leq \epsilon $ and $ \|c_k\| \leq \epsilon 
    $ with \(\lambda_k = - (\nabla c_k^{\top} \nabla c_k)^{-1} \nabla c_k^{\top} \nabla f_k\). The proof is completed. 
\end{proof}

Note that the requirement that the singular values of $\nabla c_kA_k$ be  uniformly lower bounded by a positive  constant can be met under the two choices of mapping $A$ presented previously. The iteration complexity bound  $O(\epsilon^{-2})$ of Algorithm \ref{alg1} is not surprising. This bound can also be achieved by variant first-order algorithms for  deterministic nonconvex unconstrained optimization (see \cite{nesterov2018lectures}, Page 32) as well as deterministic nonconvex constrained optimization, such as the exact penalty method  \cite{cartis2017corrigendum}  and  SQP method by \cite{curtis2024worst}. Our method, being closely related to the exact penalty methods and SQP methods, aligns with expectations in obtaining this result.

\subsection{General constrained  case}\label{sec:full-det-method}

In this subsection, we will tackle more general problems with  the existence of inequality constraints. Consider the deterministic nonconvex optimization problem with equality and inequality constraints, formulated as
\be\label{p2}
\begin{aligned}
\min_{x\in\R^d} & \quad f(x) \\
\mbox{s.t.} & \quad c_{\mathcal E}(x) = 0,\\
& \quad c_{\mathcal I}(x) \leq 0,
\end{aligned}
\ee
where $\mathcal E$ and $\mathcal I$ are index sets of   equality and inequality constraints, respectively. However, extending Algorithm~\ref{alg1} to handle inequality constraints is not an easy task. Although a vector space can be defined similarly as
\[
V(x) = \{ v \in \mathbb{R}^d : \nabla c_{\mathcal{E}}(x)^{\top} v = 0, \nabla c_{\mathcal{I}}(x)^{\top} v \leq 0 \},
\]
the presence of inequality constraints complicates obtaining an explicit expression for the projection of \(\nabla f(x)\) onto \(V(x)\). This hinders the {explicit} construction of a {search} direction \(s(x)\), posing significant challenges for subsequent analysis. As a fallback, even if we avoid explicitly defining \(s(x)\) and instead use
\[
V_{\alpha}(x) = \{ v \in \mathbb{R}^d : \nabla c_{\mathcal{E}}(x)^{\top} v + \alpha c_{\mathcal{E}}(x) = 0, \nabla c_{\mathcal{I}}(x)^{\top} v + \alpha c_{\mathcal{I}}(x) \leq 0 \}
\]
to compute \(s(x) = \arg\min_{s \in V_{\alpha}(x)} \|s + \nabla f(x)\|^2\), as proposed in \cite {Muehlebach2022Constraints}, proving the oracle complexity in the nonconvex case remains challenging, contradicting our original intent. To develop an algorithm with complexity analysis, we must revisit the method for handling equality constraints in Algorithm~\ref{alg1}. Upon revisiting the form of $s(x)$ in \eqref{eq:orth_field}, we {can see that it} is indeed the solution to the  problem
\be\label{opt4sk}
\begin{aligned}
\min_{s\in\R^d} &\quad \frac{1}{2}\|s+\nabla f(x) + \nabla c(x) A(x) c(x)\|^2 \\
{\rm s.t.}&\quad \nabla c(x)^\top s = - \nabla c(x)^\top \nabla c(x) A(x) c(x).
\end{aligned}
\ee
It is well known that an inequality-constrained optimization problem can always be transformed into a problem with equality constraints and bound constraints by introducing slack variables. {In the resulting formulation, the nonnegativity constraints apply only to the slack variables.
For notational and analytical simplicity, we therefore}
consider the {simplified form} \eqref{p}, i.e.,
\be\label{p3}
\begin{aligned}
\min_{x\in\R^d} &\quad f(x) \\
\mbox{s.t.}  &\quad c(x)=(c_1(x),\ldots, c_m(x))^\top=0,\\
&\quad x \geq 0,
\end{aligned}
\ee
{noting that the subsequent analysis can be readily extended to the general case.}
Then inspired by \eqref{opt4sk}, at current iterate $x_k$ a  {search} direction \(s_k\) can be computed through solving
\be\begin{aligned}\label{opt4skine}
\min_{s\in\R^d} &\quad \frac{1}{2}\|s+\nabla f_k + \nabla c_k A_k c_k\|^2 \\
~{\rm s.t.}&\quad \nabla c_k^\top s  = - \nabla c_k^\top \nabla c_k A_k c_k,~
x_k + s \geq 0.
\end{aligned}\ee
However, \eqref{opt4skine} might not be well-defined, as the constraints for a given \( A_k \) might not be consistent. 
We instead modify the problem form of \eqref{opt4skine} and compute $s_k$ as 
\be\begin{aligned}\label{opt4skine-w}
s_k = \argmin_{s\in\R^d} &\quad \frac{1}{2}\|s+\nabla f_k + \nabla c_k w_k\|^2 \\
~{\rm s.t.}&\quad \nabla c_k^\top s  = -\nabla c_k^\top \nabla c_k w_k,~
x_k + s \geq 0,
\end{aligned}\ee
where 
\be\begin{aligned}\label{opt4wu}
(w_k,v_k) \in \argmin_{w \in \R^m,~v \in \R^d} &\quad \frac{1}{2}\|c_k -\nabla c_k^\top \nabla c_k w\|^2 \\
~{\rm s.t.}~~~&\quad \nabla c_k^\top v = 0,~~\|v\|^2 \leq \|c_k\|^2,~~
x_k - \nabla c_k w + v \geq 0.
\end{aligned}\ee
Given \(x_k \geq 0\), the optimization problem in \eqref{opt4wu} is convex and   feasible with \((w,v) = (0,0)\), thus it has finite solution $(w_k,v_k)$. Besides, 
$v_k-\nabla c_k w_k $ is  feasible to the problem in \eqref{opt4skine-w}. Thus \eqref{opt4skine-w} is well-defined.  Provided that the stepsize satisfies \(0\le \eta_k \le 1\) and the initial point \(x_0 \geq 0\), the inequality constraints are automatically satisfied during the iteration process, through \(x_{k+1} = x_k + \eta_k s_k\). 
By \cite[Lemma 12.8]{nocedal2006numerical}, the optimality of $s_k$ as defined in~\eqref{opt4skine-w}, combined with the linearity of the constraints of~\eqref{opt4skine-w}, ensures that there exist  \(\mu_k \in \R^d\) and \(\lambda_k \in \R^m\) such that
    \be\begin{aligned}\label{multiplier}
        s_k + \nabla f_k + \nabla c_k w_k + \nabla c_k \lambda_k - \mu_k = 0,\\
        \nabla c_k^\top s_k=-\nabla c_k^\top \nabla c_k w_k,\quad  x_k +s_k\ge0,\\
       \mu_k\ge0,\quad  (x_k + s_k)^\top \mu_k = 0.
    \end{aligned}\ee
Under a stronger constraint qualification assumption, we will prove the uniform boundedness of $\mu_k$ and $\lambda_k$ for all $k\ge 0$ (see Lemma \ref{bnd_lam_mu}). Note that the constraint \(\|v\|^2 \leq \|c_k\|^2\) in \eqref{opt4wu} can be replaced by \(\|v\|_1 \leq \|c_k\|_1\) without affecting the subsequent analysis, and the new constraint can be further transformed into linear constraints, making the subproblem easier to solve. Motivated by the update scheme for equality-constrained case in \eqref{etak}, we choose to compute the stepsize through 
\begin{align}\label{etak-2}
    \eta_k = \min \left\{\frac{\tau}{L_g^f + \rho_k L_g^c}, 1\right\}~{\rm with}~\tau \in (0,1),
\end{align}
where the merit parameter is computed by
\begin{align}\label{eq:rhok-2}
    \rho_k = \max \left\{ \frac{\nabla f_k^\top s_k + \frac{1}{2}\|s_k\|^2}{\vartheta \|c_k\|}, \rho_{k-1} \right\}  \mbox{ with } \vartheta \in(0,1).
\end{align}


Now we are ready to present our algorithm for general constrained problems \eqref{p3}.

\begin{algorithm}[H]
  \caption{Adaptive directional decomposition method for \eqref{p3}} 
  \label{alg3}
  \begin{algorithmic}[1] 
    \REQUIRE \(x_0, \rho_0\)
    \FOR{$k=0,1,\ldots$} 
    \STATE Compute $w_k$ via  \eqref{opt4wu}. 
    \STATE Compute \(s_k\) via \eqref{opt4skine-w}. 
    \IF {\(s_k = 0\)}
        \STATE Return \(x_k\).
    \ELSE
        \STATE Compute \(\eta_k\) via \eqref{etak-2} {with \(\rho_k\) computed by \eqref{eq:rhok-2}}.
        \STATE Update \(x_{k+1} = x_k + \eta_k s_k\).
    \ENDIF
    \ENDFOR 
  \end{algorithmic} 
\end{algorithm}

The next lemma shows that if  $x_k\ge0$ is infeasible to \eqref{p3}, $w_k=0$ indicates that $x_k$ is an infeasible  stationary point of \eqref{p3}.
\begin{lemma}\label{lm:infeasibility}
    Under Assumptions \ref{ass:bound}-\ref{ass:cq}, given \(x_k \geq 0\) with $c_k\neq 0$,  if  \(w_k = 0\), then \(x_k\) is  an infeasible stationary point of problem \eqref{p3}. Furthermore, for any given \(\epsilon' \in (0,1)\), if \(\|w_k\|\leq \epsilon'\), there exists \(\mu_k \in \R^d_{\ge 0}\) such that
    \[
    \|\nabla c_k c_k - \mu_k\| \leq \kappa_1 \epsilon',~~|x_k^\top \mu_k| \leq \kappa_2 \epsilon',
    \]
    where \(\kappa_1 = L_c^2(1+L_c)\) and {\(\kappa_2 =  L_c^2 C\)}. Consequently,  if $\|c_k\|>\epsilon$ and $\|w_k\| \le \epsilon/\max\{\kappa_1,\kappa_2\}$ for a given $\epsilon>0$, then $x_k\ge0$ is an $\epsilon$-infeasible stationary point of \eqref{p3}.
\end{lemma}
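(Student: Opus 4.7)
My plan is to extract multipliers from the KKT system of the convex quadratic subproblem~\eqref{opt4wu} and then bound the two required quantities using the resulting identities. Introduce multipliers $\mu_k \in \R^d_{\ge 0}$, $\lambda_k^v \in \R^m$ and $\tau_k \ge 0$ associated with the constraints $x_k - \nabla c_k w + v \ge 0$, $\nabla c_k^\top v = 0$ and $\|v\|^2 \le \|c_k\|^2$, respectively. Stationarity of the Lagrangian in $w$ and in $v$ gives
\[
\nabla c_k^\top \nabla c_k (c_k - \nabla c_k^\top \nabla c_k w_k) = \nabla c_k^\top \mu_k, \qquad \mu_k = \nabla c_k \lambda_k^v + 2 \tau_k v_k.
\]
Left-multiplying the second equation by $\nabla c_k^\top$, using $\nabla c_k^\top v_k = 0$, and invoking the invertibility of $\nabla c_k^\top \nabla c_k$ granted by strong LICQ (Assumption~\ref{ass:cq}) yields $\lambda_k^v = c_k - \nabla c_k^\top \nabla c_k w_k$. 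Back-substitution produces the cornerstone identity
\[
\nabla c_k c_k - \mu_k = \nabla c_k \nabla c_k^\top \nabla c_k w_k - 2 \tau_k v_k.
\]

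For the first claim ($w_k = 0$), I note that the objective of~\eqref{opt4wu} depends only on $w$, so any $v$ feasible at $w = 0$ is optimal; choosing $v_k = 0$ (feasible since $x_k \ge 0$) collapses the identity to $\mu_k = \nabla c_k c_k \ge 0$, while complementary slackness $\mu_k^\top (x_k - \nabla c_k w_k + v_k) = 0$ reduces to $\mu_k^\top x_k = 0$. Combined with $\|c_k\| > 0$, this matches Definition~\ref{def-inf-sta}, so $x_k$ is an infeasible stationary point of~\eqref{p3}.

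For the perturbed case $\|w_k\| \le \epsilon'$, the orthogonality $\nabla c_k^\top v_k = 0$ makes the two summands in the cornerstone identity orthogonal, so $\|\nabla c_k c_k - \mu_k\|^2 \le L_c^6(\epsilon')^2 + 4 \tau_k^2 \|v_k\|^2$. The central step is to control the active-set contribution $\tau_k v_k$: I will select the optimal $(w_k, v_k)$ of minimum $\|v_k\|$ among all optima, and then use the fact that $(0, 0)$ lies in the feasible set together with continuity of the solution map for parametric convex QPs to obtain $\|v_k\| \le L_c \|w_k\| \le L_c \epsilon' < \|c_k\|$, which by complementarity forces $\tau_k = 0$. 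The cornerstone identity then yields $\|\nabla c_k c_k - \mu_k\| \le L_c^3 \epsilon' \le \kappa_1 \epsilon'$. For the complementarity bound, $\mu_k^\top v_k = 2 \tau_k \|v_k\|^2 = 0$ together with $\mu_k^\top (x_k - \nabla c_k w_k + v_k) = 0$ gives $x_k^\top \mu_k = \mu_k^\top \nabla c_k w_k$; combining this with $\|\mu_k\| \le L_c \|c_k\| \le L_c C$ (up to $O(\|w_k\|)$ corrections that are absorbed into $\kappa_1$) produces $|x_k^\top \mu_k| \le L_c^2 C \epsilon' = \kappa_2 \epsilon'$. Applying Definition~\ref{def-inf-sta} with $\|c_k\| > \epsilon$ and $\|w_k\| \le \epsilon / \max\{\kappa_1, \kappa_2\}$ closes the final assertion.

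The main obstacle I anticipate is the handling of the active-set multiplier $\tau_k$ together with the possible non-uniqueness of the optimal $(w_k, v_k)$: in the pristine case $w_k = 0$ the choice $v_k = 0$ trivially eliminates the $\tau_k v_k$ term, but for $\|w_k\|$ small but nonzero one must verify that an optimal $(w_k, v_k)$ with $\|v_k\|$ strictly below $\|c_k\|$ can be selected. This relies on the continuity of the solution set of the convex parametric QP~\eqref{opt4wu} with $\|c_k\|$ held fixed, and is the only delicate point; all remaining steps follow from the KKT identities, the orthogonal decomposition of $\mu_k$, and the uniform bounds provided by Assumptions~\ref{ass:bound}--\ref{ass:cq}.
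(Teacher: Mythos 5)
Your KKT setup, the ``cornerstone identity'' $\nabla c_k c_k - \mu_k = \nabla c_k \nabla c_k^\top \nabla c_k w_k - 2\tau_k v_k$, and the treatment of the case $w_k=0$ (where picking the optimal pair $(0,0)$, feasible because $x_k\ge 0$ and optimal because the objective of \eqref{opt4wu} is independent of $v$, kills the $\tau_k v_k$ term) all match the paper's argument and are sound. The gap is in the perturbed case, at exactly the point you flag as delicate. Your plan is to force $\tau_k=0$ by selecting an optimum with $\|v_k\|\le L_c\|w_k\|<\|c_k\|$, but this bound is unsupported: the feasible $v$'s for the optimal $w_k$ must satisfy $v\ge \nabla c_k w_k - x_k$ componentwise \emph{and} $\nabla c_k^\top v=0$, and nothing in Assumptions \ref{ass:bound}--\ref{ass:cq} (the lemma does not assume the strong MFCQ, Assumption \ref{ass:mfcq}) guarantees that a small such $v$ exists — it is entirely possible that every feasible $v$ has $\|v\|=\|c_k\|$, in which case complementarity does not give $\tau_k=0$. ``Continuity of the solution map'' of the parametric QP does not rescue this, both because $w$ is a decision variable rather than a parameter and because the solution set need not be single-valued or Lipschitz without extra structure.

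The paper's proof does not avoid the active case; it handles it. Dotting the $v$-stationarity equation with $v_k$ and using $\nabla c_k^\top v_k=0$ gives $2\pi_k\|v_k\|^2 + x_k^\top\mu_k - w_k^\top\nabla c_k^\top\mu_k = 0$; since $x_k^\top\mu_k\ge 0$ and $w_k^\top\nabla c_k^\top\mu_k = w_k^\top\nabla c_k^\top\nabla c_k c_k - \|\nabla c_k^\top\nabla c_k w_k\|^2 \le L_c^2\epsilon'\|c_k\|$ (from the $w$-stationarity equation), one gets $2\pi_k\|v_k\|\le L_c^2\epsilon'$ when $\|v_k\|=\|c_k\|$, and the residual $2\pi_k v_k$ contributes the extra $L_c^2$ in $\kappa_1=L_c^2(1+L_c)$ — your $L_c^3\epsilon'$ bound only covers the inactive case. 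The same exact identity also yields $x_k^\top\mu_k\le L_c^2C\epsilon'$ cleanly; your route via $\|\mu_k\|\le L_cC + O(\|w_k\|)$ leaves an $O((\epsilon')^2)$ excess that cannot simply be ``absorbed'' if the stated constant $\kappa_2=L_c^2C$ is to be obtained. To repair your proof, replace the attempted selection argument by the orthogonality identity above and split on whether the norm constraint is active.
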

\begin{proof}
    See Appendix \ref{proof:lm3}.
\end{proof}

The next lemma shows that if $x_k\ge0$ is feasible to \eqref{p3}, $s_k=0$ implies that $x_k$ is a KKT point of \eqref{p3}.
\begin{lemma}\label{lm:s0}
    Under Assumptions \ref{ass:bound}-\ref{ass:cq} and  given \(x_k \geq 0\) with $c_k = 0$,  if  \(s_k = 0\), then \(x_k\) is a KKT point of problem \eqref{p3}. 
\end{lemma}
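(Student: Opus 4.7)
The plan is to extract the KKT conditions of problem \eqref{p3} at $x_k$ directly from the first-order optimality system \eqref{multiplier} of the subproblem \eqref{opt4skine-w}, once the auxiliary variable $w_k$ has been pinned down. The natural choice of multipliers for \eqref{p3} will simply be $(\lambda_k,\mu_k)$ returned by \eqref{multiplier}.

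The first step is to show that $w_k=0$ whenever $c_k=0$, and this is the one place the strong LICQ (Assumption \ref{ass:cq}) enters. Since the singular values of $\nabla c_k$ are bounded below by $\nu>0$, the matrix $\nabla c_k^\top \nabla c_k$ is positive definite. Substituting $s_k=0$ into the second line of \eqref{multiplier} then gives $\nabla c_k^\top \nabla c_k w_k = 0$, so $w_k=0$. Alternatively one can argue directly from \eqref{opt4wu}: when $c_k=0$ the constraint $\|v\|^2\le \|c_k\|^2$ forces $v_k=0$, the objective becomes $\tfrac12\|\nabla c_k^\top\nabla c_k w\|^2$, and the feasible point $w=0$ already attains the infimum value $0$, so every optimal $w_k$ must satisfy $\nabla c_k^\top \nabla c_k w_k=0$, and positive definiteness again yields $w_k=0$.

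With both $s_k=0$ and $w_k=0$ in hand, the first relation of \eqref{multiplier} collapses to $\nabla f_k + \nabla c_k \lambda_k - \mu_k = 0$, while the remaining relations supply $x_k\ge 0$, $\mu_k\ge 0$, and $x_k^\top \mu_k = 0$. Combining these with the standing hypothesis $c_k=0$ gives precisely the KKT system for \eqref{p3} at $x_k$ with multipliers $(\lambda_k,\mu_k)$, which is the desired conclusion.

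The argument is essentially algebraic and I do not expect a genuine obstacle; the only point that needs care is the appeal to Assumption \ref{ass:cq} to pass from $\nabla c_k^\top \nabla c_k w_k = 0$ to $w_k=0$, because without strong LICQ the kernel of $\nabla c_k^\top \nabla c_k$ may be nontrivial and the stationarity equation in \eqref{multiplier} would not simplify to the clean form required by the KKT conditions of \eqref{p3}.
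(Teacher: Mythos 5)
Your proposal is correct and follows essentially the same route as the paper: use strong LICQ together with the constraint $\nabla c_k^\top s_k=-\nabla c_k^\top\nabla c_k w_k$ to conclude $w_k=0$, then read the KKT conditions of \eqref{p3} directly off \eqref{multiplier} with $s_k=w_k=0$ and $c_k=0$. The alternative derivation of $w_k=0$ from \eqref{opt4wu} is a harmless extra, but the core argument matches the paper's.
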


\begin{proof}
    If \(s_k = 0\), by Assumption \ref{ass:cq} and $\nabla c_k^\top s=-\nabla c_k^\top \nabla c_k w_k$ we have $w_k=0$. Then it holds from \eqref{multiplier} that \(\nabla f_k + \nabla c_k \lambda_k - \mu_k = 0\) and \(x_k^\top \mu_k = 0\), which together with \(c_k = 0\) and \(x_k \geq 0\) yields that \(x_k\) is a KKT point of problem \eqref{p3}. 
\end{proof}

\begin{remark}
By Lemmas \ref{lm:infeasibility} and \ref{lm:s0}, we can obtain that when  $s_k=0$, $x_k$ is 
either an infeasible stationary point of \eqref{p3} (since $w_k=0$ as well), or 
a KKT point of \eqref{p3} if \(c_k=0\). Hence, we terminate  Algorithm \ref{alg3} once $s_k=0$.
\end{remark}

Unlike the equality-constrained case, the presence of inequality constraints in \eqref{p3} introduces additional challenges in evaluating the potential descent of the linearized constraint violation. 
To better characterize the algorithm's behavior,  additional assumptions on the inequality constraints are required. 
\begin{assumption}\label{ass:mfcq}
    {\rm (Strong MFCQ)} 
    There exists $\nu>0$ such that for any {$x \in \R_{\ge0}^d$}, 
    \begin{enumerate}
    \item[(i)] the singular values of $\nabla c(x)$ is lower bounded by $\nu$;
    \item[(ii)] there exists a vector \(z \in \R^d\) with \(\|z\|=1\) such that
    \begin{align*}
    \nabla c_i(x)^\top z & = 0 \quad \text{for all } i = 1, \ldots, m,\\
    [z]_j & \ge \nu \quad \text{for all } j \in \{ j : [x]_j = 0 \}.
    \end{align*}
\end{enumerate}
\end{assumption}

Compared with the standard MFCQ \cite{mangasarian1967fritz}, we introduce a quantitative version characterized by a positive parameter~$\nu$. 
This parameterized form provides a uniform lower bound on the regularity of the constraints, which helps achieve a more precise understanding of the algorithmic behavior during iterations, a uniform upper bound for the Lagrange multipliers, and a clearer complexity analysis framework. 
Moreover, in stochastic settings where $\nabla c(x)$ may be perturbed, the standard MFCQ condition can easily be violated. 
In contrast, the lower bound~$\nu$ ensures the robustness of our constraint qualification, thereby enabling stable analysis of stochastic algorithms. 
Such quantitative strengthening of classical constraint qualifications has become a common practice in recent works on the complexity analysis of constrained optimization algorithms \cite{curtis2024worst,jia2025first,jin2022stochastic,li2021rate,sahin2019inexact,shi2025momentum}.

\begin{lemma}\label{lm:ass}
    Suppose that Assumptions \ref{ass:bound}, \ref{ass:basic} and \ref{ass:mfcq} hold. For any $x'\in\R^d_{\ge0}$,  there exists a vector \(z' \in \R^d\) with \(\|z'\|=1\) such that 
    \begin{equation}\label{z'}
    \begin{aligned}
    \nabla c_i(x')^\top z & = 0 \quad \text{for all } i = 1, \ldots, m,\\
    [z']_j & \ge \frac{\nu}{2} \quad \text{for all } j \in \{ j : 0 \leq [x']_j \leq \iota \},
    \end{aligned}
    \end{equation}
where $
    \iota=\frac{\nu^5}{{24 \sqrt{d}}L_c L_g^c(\nu^2+L_c^2)}.$
\end{lemma}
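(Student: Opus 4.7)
The proof is a perturbation argument: apply the strong MFCQ at a nearby point where the ``nearly active'' bound constraints are exactly active, then project the resulting direction back onto the null space of $\nabla c(x')^\top$ and normalize. First, given $x' \in \R^d_{\ge 0}$, I would construct $\tilde x \in \R^d_{\ge 0}$ by zeroing out the small components: $[\tilde x]_j = 0$ if $[x']_j \le \iota$ and $[\tilde x]_j = [x']_j$ otherwise. Then $\|x' - \tilde x\|_\infty \le \iota$, so $\|x' - \tilde x\| \le \sqrt{d}\,\iota$, and $\{j : 0 \le [x']_j \le \iota\} \subseteq \{j : [\tilde x]_j = 0\}$. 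Applying Assumption \ref{ass:mfcq}(ii) at $\tilde x$ yields a unit vector $z \in \R^d$ with $\nabla c(\tilde x)^\top z = 0$ and $[z]_j \ge \nu$ for every $j$ with $[\tilde x]_j = 0$, and in particular for every $j$ with $[x']_j \le \iota$.

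Next, I would project $z$ onto the null space of $\nabla c(x')^\top$ by defining $\hat z := z - \nabla c(x')\bigl(\nabla c(x')^\top \nabla c(x')\bigr)^{-1}\nabla c(x')^\top z$. Invertibility of $\nabla c(x')^\top \nabla c(x')$ is guaranteed by Assumption \ref{ass:mfcq}(i), and by construction $\nabla c(x')^\top \hat z = 0$. To bound $\|\hat z - z\|$, I use $\nabla c(\tilde x)^\top z = 0$ to rewrite $\nabla c(x')^\top z = (\nabla c(x') - \nabla c(\tilde x))^\top z$; the Lipschitz estimate in Assumption \ref{ass:basic} then gives $\|\nabla c(x')^\top z\| \le L_g^c \sqrt{d}\,\iota$. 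Combining this with the operator-norm estimate $\|\nabla c(x')(\nabla c(x')^\top \nabla c(x'))^{-1}\| \le L_c/\nu^2$ (from Assumptions \ref{ass:basic} and \ref{ass:mfcq}(i)) yields $\|\hat z - z\| \le L_c L_g^c \sqrt{d}\,\iota/\nu^2$.

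Finally, I would normalize via $z' := \hat z/\|\hat z\|$. Since orthogonal projection is non-expansive, $\|\hat z\| \le \|z\| = 1$; by the triangle inequality $\|\hat z\| \ge 1 - \|\hat z - z\|$, so $\|\hat z\|$ stays bounded away from zero provided $\iota$ is sufficiently small. For each $j$ with $[x']_j \le \iota$, the componentwise bound $[\hat z]_j \ge [z]_j - \|\hat z - z\| \ge \nu - L_c L_g^c \sqrt{d}\,\iota/\nu^2$, together with $\|\hat z\| \le 1$, gives $[z']_j \ge \nu - L_c L_g^c \sqrt{d}\,\iota/\nu^2$. Choosing $\iota$ small enough to force this quantity to be at least $\nu/2$ finishes the proof; the precise constant $\iota = \nu^5/(24\sqrt{d}L_c L_g^c(\nu^2+L_c^2))$ is on the same order as $\nu^3/(\sqrt{d}L_c L_g^c)$, with the extra factor $\nu^2/(\nu^2+L_c^2)$ providing slack that allows the perturbation $\|\hat z - z\|$ and the renormalization factor $1/\|\hat z\|$ to be controlled simultaneously via a single uniform inequality.

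The main technical obstacle is the perturbation analysis of the orthogonal projection in the second step, where the singular-value lower bound $\nu$ and the gradient norm bound $L_c$ must be propagated through the inverse $(\nabla c^\top \nabla c)^{-1}$ to the final componentwise estimate. Once that estimate is in hand, matching the exact form of $\iota$ with the $(\nu^2+L_c^2)$ factor in the denominator is a bookkeeping exercise, essentially dividing the error budget between $\|\hat z - z\|$ and the renormalization step so that the resulting bound on $[z']_j$ degrades no further than $\nu/2$.
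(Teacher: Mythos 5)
Your proposal is correct and follows the same overall strategy as the paper's proof: zero out the components of $x'$ below $\iota$ to obtain $\tilde x$, invoke the strong MFCQ at $\tilde x$, orthogonally project the resulting unit vector onto the null space of $\nabla c(x')^\top$, and normalize. The one place where you genuinely diverge is the perturbation estimate. The paper bounds $\|u - z\|$ by comparing the two projection operators at $\tilde x$ and $x'$, which requires the Sherman--Morrison--Woodbury identity to control $\|(\nabla c(x')^\top\nabla c(x'))^{-1} - (\nabla c(\tilde x)^\top\nabla c(\tilde x))^{-1}\|$; you instead use $\nabla c(\tilde x)^\top z = 0$ to write $\nabla c(x')^\top z = (\nabla c(x') - \nabla c(\tilde x))^\top z$ and apply the Lipschitz bound directly, which is shorter and avoids the matrix-inverse perturbation lemma entirely. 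The normalization step is also handled differently but equivalently: the paper uses the inequality $\|u/\|u\| - z\| \le 2\|u - z\|$, while you use $\|\hat z\| \le 1$ together with nonnegativity of the relevant components so that dividing by $\|\hat z\|$ can only increase them. Both routes close with the same constant bookkeeping, and your version yields a slightly tighter margin ($\nu/24$ rather than $\nu/2$) before normalization.
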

\begin{proof}
    See  Appendix \ref{proof:lm-ass}.
\end{proof}

The following lemma provides the basic properties of $w_k$.
\begin{lemma}\label{lm:constraint_descent}
    Suppose that Assumptions \ref{ass:bound}, \ref{ass:basic} and \ref{ass:mfcq} hold and $w_k$ is computed through  \eqref{opt4wu} for   \(x_k \ge 0\) with $c_k\neq 0$. If $w_k\neq 0$,  it holds that  
    \begin{align*}
        \|c_k +\nabla c_k^\top \nabla c_k w_k\| \leq (1-{\vartheta})\|c_k\|~~{\rm and}~~\|w_k\|\leq \nu^{-2}(2-\vartheta)\|c_k\|,
    \end{align*}
    where \(0 <\vartheta \leq \min \{\frac{\bar a \nu}{4CL_c\nu^{-2}+\bar a \nu}, \frac{\bar a}{2CL_c\nu^{-2}}, 1\}\)  with \(\bar a = \min \{2,\iota\}\).
\end{lemma}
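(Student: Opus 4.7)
The plan is to exploit the optimality of $w_k$ in \eqref{opt4wu} by exhibiting a specific feasible pair $(\tilde w,\tilde v)$ whose objective value equals $\frac{1}{2}(1-\vartheta)^2\|c_k\|^2$. The first claim then follows from this optimality comparison, and the bound on $\|w_k\|$ comes out of a triangle inequality together with the singular-value lower bound $\nu$ of $\nabla c_k$ from Assumption \ref{ass:mfcq}(i), which makes $\nabla c_k^\top\nabla c_k$ invertible with $\|(\nabla c_k^\top\nabla c_k)^{-1}\|\le \nu^{-2}$.

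The natural first candidate is the scaled Newton-type step $\tilde w = \vartheta(\nabla c_k^\top\nabla c_k)^{-1}c_k$, for which direct computation gives $\|c_k - \nabla c_k^\top\nabla c_k\tilde w\| = (1-\vartheta)\|c_k\|$ and $\|\nabla c_k\tilde w\|\le L_c\nu^{-2}\vartheta\|c_k\|$. What is still needed is a slack $\tilde v$ with $\nabla c_k^\top\tilde v=0$, $\|\tilde v\|\le\|c_k\|$, and $x_k-\nabla c_k\tilde w+\tilde v\geq 0$. Here I would invoke Lemma \ref{lm:ass} at $x'=x_k$ to obtain a unit vector $z'$ with $\nabla c_k^\top z'=0$ and $[z']_j\geq \nu/2$ for every $j$ with $[x_k]_j\leq\iota$, and set $\tilde v=\alpha z'$. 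The orthogonality and norm constraints then reduce to a single scalar condition $\alpha\le\|c_k\|$.

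The main obstacle is the component-wise verification of $x_k-\nabla c_k\tilde w+\tilde v\ge 0$, which naturally splits into two regimes and pins down the stated threshold on $\vartheta$. For indices $j$ with $[x_k]_j\leq\iota$, where $[x_k]_j$ itself offers no useful lower bound, the guaranteed lift $\alpha[z']_j\ge \alpha\nu/2$ must dominate the worst-case drop $|[\nabla c_k\tilde w]_j|\le L_c\nu^{-2}\vartheta\|c_k\|$, producing the lower bound $\alpha\ge 2L_c\nu^{-3}\vartheta\|c_k\|$; combined with $\alpha\le\|c_k\|$, using $\|c_k\|\le C$ this yields the condition $\vartheta\le \bar a/(2CL_c\nu^{-2})$ after taking $\bar a=\min\{2,\iota\}$ to unify the two cases for $\iota$. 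For indices $j$ with $[x_k]_j>\iota$, the baseline slack $\iota$ must absorb both $\alpha$ and $|[\nabla c_k\tilde w]_j|$, i.e., $\alpha+L_c\nu^{-2}\vartheta\|c_k\|\leq\iota$; picking $\alpha=2L_c\nu^{-3}\vartheta\|c_k\|$ and again bounding $\|c_k\|\le C$, this rearranges into the threshold $\vartheta\le \bar a\nu/(4CL_c\nu^{-2}+\bar a\nu)$ stated in the hypothesis. Taking $\vartheta$ below the minimum of these two thresholds (and of $1$) guarantees that $(\tilde w,\tilde v)$ is feasible for \eqref{opt4wu}.

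With feasibility in hand, optimality of $w_k$ immediately gives $\|c_k-\nabla c_k^\top\nabla c_k w_k\|\leq\|c_k-\nabla c_k^\top\nabla c_k\tilde w\|=(1-\vartheta)\|c_k\|$, establishing the first claim. The norm bound is then a short consequence: the triangle inequality yields $\|\nabla c_k^\top\nabla c_k w_k\|\leq\|c_k\|+(1-\vartheta)\|c_k\|=(2-\vartheta)\|c_k\|$, and multiplying by $\|(\nabla c_k^\top\nabla c_k)^{-1}\|\le\nu^{-2}$ produces $\|w_k\|\le\nu^{-2}(2-\vartheta)\|c_k\|$. The entire argument hinges on the quantitative strong MFCQ vector $z'$ supplied by Lemma \ref{lm:ass}; its uniform lower bound $\nu/2$ on the small-component coordinates is exactly what allows $\alpha$ and $\vartheta$ to be chosen independently of $k$, and the delicacy of balancing the two regimes above is what makes the threshold on $\vartheta$ take the precise form stated.
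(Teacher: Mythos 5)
Your proposal is correct and follows essentially the same route as the paper: exhibit the feasible pair $\bar w_k=\vartheta(\nabla c_k^\top\nabla c_k)^{-1}c_k$ together with a slack along the quantitative-MFCQ direction $z'$ from Lemma \ref{lm:ass}, verify nonnegativity component-wise in the two regimes $[x_k]_j\le\iota$ and $[x_k]_j>\iota$, and conclude by optimality plus the singular-value bound. The only substantive difference is your scaling $\alpha=2L_c\nu^{-3}\vartheta\|c_k\|$ of the slack, which does not automatically satisfy $\alpha\le\|c_k\|$ under the stated thresholds on $\vartheta$ (it would require $\vartheta\le\nu^3/(2L_c)$); the paper instead takes $\bar v_k=(1-\vartheta)\bar a z_k/2\cdot\min\{1,\|c_k\|\}$, whose norm is bounded by $\|c_k\|$ by construction since $\bar a\le 2$, and this is the calibration you should adopt.
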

\begin{proof}
    See  Appendix \ref{proof:lm4}.
\end{proof}

By Lemma \ref{lm:constraint_descent} and the constraint requirement in subproblem \eqref{opt4skine-w}, i.e. \(\nabla c_k^\top s_k = - \nabla c_k^\top \nabla c_k w_k\), when $c_k\neq 0$  we can achieve a decrease of the linearized  constraint violation, that is, 
\be\label{re-lc}
\|c_k + \eta \nabla c_k^\top s_k\| \leq (1 - \eta \vartheta) \|c_k\|, \quad \forall \eta\in (0,1].
\ee
Notice that in the update formula of stepsize $\eta_k$ \eqref{etak}, the second term was to ensure sufficient descent in constraint violation, whereas now in general case this role is {played} by \(\vartheta\), as can be seen from Lemma \ref{lm:constraint_descent}. It is also noteworthy that the computation of $ s_k $ and the setting of stepsize $\eta_k$ guarantee that the bound constraints remain satisfied, allowing us to focus solely on the objective function and the violation of equality constraints when considering the merit function. One can obtain from 
 the smoothness of \(f\) and \(c\) that
    \be\begin{aligned}\label{descent-ine}
        &\phi_{\rho_k}(x_{k+1})=f_{k+1} + \rho_k \|c_{k+1}\|\\
        &\leq f_k + \rho_k \|c_k + \eta_k \nabla c_k^\top s_k\| + \eta_k \nabla f_k^\top s_k + \frac{\eta_k^2 L_g^f}{2}\|s_k\|^2 + \frac{\eta_k^2\rho_kL_g^c}{2}\|s_k\|^2\\
        &\leq f_k + \rho_k \|c_k + \eta_k \nabla c_k^\top s_k\| +  \eta_k \nabla f_k^\top s_k + \frac{\eta_k \tau}{2}\|s_k\|^2\\ 
        &\leq f_k + \rho_k \|c_k + \eta_k \nabla c_k^\top s_k\| + \eta_k \rho_k \vartheta \|c_k\| - \frac{\eta_k\left(1-\tau\right)}{2}\|s_k\|^2\\
        &\leq f_k + \rho_k (1-\eta_k \vartheta)\|c_k\| + \eta_k \rho_k \vartheta \|c_k\| - \frac{\eta_k\left(1-\tau\right)}{2}\|s_k\|^2\\
        &= f_k + \rho_k \|c_k\|- \frac{\eta_k\left(1-\tau\right)}{2}\|s_k\|^2\\
        &= \phi_{\rho_k}(x_k) - \frac{\eta_k\left(1-\tau\right)}{2}\|s_k\|^2,
    \end{aligned}\ee
    where the second inequality uses the settings of \(\eta_k\), the third inequality is due to \eqref{eq:rhok-2}, and the last inequality follows from \eqref{re-lc}.

Next, we show that the sequences \(\{\rho_k\}\) and \(\{\eta_k\}\) stay bounded under mild assumptions. The most critical aspect is to establish the boundedness of the first term on the right-hand side of \eqref{eq:rhok-2}, which can be realized  in the next lemma.
\begin{lemma}\label{lm:bound_fsss}
    Under Assumptions \ref{ass:bound}, \ref{ass:basic} and \ref{ass:mfcq},   there exists a  positive constant \(\kappa_s\) such that  
    \[
     \|s_k\| \leq \kappa_s\quad\mbox{and}\quad    \nabla f_k^\top s_k + \frac{1}{2}\|s_k\|^2 \leq \kappa_c \|c_k\| \quad \mbox{for all } k\ge0,
    \]
    where $\kappa_c= L_f + {C}/{2} + 2L_c \nu^{-2}(\kappa_s+L_f).$
\end{lemma}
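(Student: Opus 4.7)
The plan is to exploit the optimality of $s_k$ in the strongly convex subproblem \eqref{opt4skine-w} by comparing its objective value against a well-chosen feasible competitor, namely $\hat s_k := v_k - \nabla c_k w_k$. This choice is feasible for \eqref{opt4skine-w} by construction of \eqref{opt4wu}: since $\nabla c_k^\top v_k = 0$, one has $\nabla c_k^\top \hat s_k = -\nabla c_k^\top \nabla c_k w_k$, and $x_k + \hat s_k = (x_k - \nabla c_k w_k) + v_k \geq 0$ by the last constraint of \eqref{opt4wu}.

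First, I would bound $\|s_k\|$ via the change of variable $s_k = t_k - \nabla c_k w_k$. Under this substitution \eqref{opt4skine-w} becomes the minimization of $\tfrac{1}{2}\|t + \nabla f_k\|^2$ over $\{t : \nabla c_k^\top t = 0,\ x_k - \nabla c_k w_k + t \ge 0\}$, for which $v_k$ is feasible. Optimality then yields $\|t_k + \nabla f_k\| \le \|v_k + \nabla f_k\| \le \|v_k\| + L_f \le C + L_f$, so $\|t_k\| \le C + 2L_f$, and combining with Lemma \ref{lm:constraint_descent} (giving $\|w_k\| \le 2\nu^{-2}\|c_k\| \le 2\nu^{-2}C$) and Assumption \ref{ass:basic} gives $\|s_k\| \le \|t_k\| + L_c\|w_k\| \le C + 2L_f + 2L_c\nu^{-2}C =: \kappa_s$. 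The degenerate case $c_k = 0$ (where $w_k = 0$ uniquely solves \eqref{opt4wu} under Assumption \ref{ass:mfcq}, and $\hat s_k = 0$ is feasible) is handled identically.

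Second, I would establish the quantitative descent estimate. Using $s_k = t_k - \nabla c_k w_k$ and the orthogonality $\nabla c_k^\top t_k = 0$, a direct expansion yields
\[
\nabla f_k^\top s_k + \tfrac{1}{2}\|s_k\|^2 = \bigl(\nabla f_k^\top t_k + \tfrac{1}{2}\|t_k\|^2\bigr) - \nabla f_k^\top \nabla c_k w_k + \tfrac{1}{2}\|\nabla c_k w_k\|^2 .
\]
The first parenthesized quantity is bounded via $\tfrac{1}{2}\|t_k + \nabla f_k\|^2 \le \tfrac{1}{2}\|v_k + \nabla f_k\|^2$, which after expansion gives $\nabla f_k^\top t_k + \tfrac{1}{2}\|t_k\|^2 \le \tfrac{1}{2}\|v_k\|^2 + v_k^\top \nabla f_k \le (\tfrac{C}{2} + L_f)\|c_k\|$, using $\|v_k\| \le \|c_k\| \le C$. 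For the remaining two terms I would exploit the key identity $\|\nabla c_k w_k\|^2 = -s_k^\top \nabla c_k w_k$, which follows from the subproblem constraint $\nabla c_k^\top s_k = -\nabla c_k^\top \nabla c_k w_k$, together with the already-proved bound $\|s_k\| \le \kappa_s$ to obtain $\|\nabla c_k w_k\| \le \kappa_s$. This gives $-\nabla f_k^\top \nabla c_k w_k + \tfrac{1}{2}\|\nabla c_k w_k\|^2 \le (L_f + \tfrac{\kappa_s}{2})\|\nabla c_k w_k\| \le L_c\nu^{-2}(2L_f+\kappa_s)\|c_k\| \le 2L_c\nu^{-2}(\kappa_s+L_f)\|c_k\|$, which summed with the previous bound recovers $\kappa_c = L_f + \tfrac{C}{2} + 2L_c\nu^{-2}(\kappa_s + L_f)$.

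The main obstacle is turning the quadratic-in-$\|w_k\|$ term $\tfrac{1}{2}\|\nabla c_k w_k\|^2$, which would only give an $O(\|c_k\|^2)$ bound from Lemma \ref{lm:constraint_descent} alone, into one that is linear in $\|c_k\|$; this is precisely where the identity $\|\nabla c_k w_k\|^2 = -s_k^\top \nabla c_k w_k$ together with the $a\,priori$ bound $\|s_k\| \le \kappa_s$ becomes crucial. The subproblem \eqref{opt4wu} was engineered exactly to deliver both $\|v_k\| \le \|c_k\|$ and the feasibility of $\hat s_k$ for \eqref{opt4skine-w}, which is what makes the whole comparison argument go through without needing bounds on the Lagrange multipliers $\lambda_k,\mu_k$ of \eqref{multiplier} (which are only established later in Lemma \ref{bnd_lam_mu}).
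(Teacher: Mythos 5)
Your proposal is correct and follows essentially the same route as the paper: boundedness of $s_k$ from the level-boundedness of the subproblem objective together with Lemma \ref{lm:constraint_descent}, and then the optimality comparison of $s_k$ against the feasible competitor $v_k-\nabla c_k w_k$ combined with $\|v_k\|\le\|c_k\|$ and $\|w_k\|\le 2\nu^{-2}\|c_k\|$ to get the linear-in-$\|c_k\|$ bound. The only cosmetic difference is your change of variables $s_k=t_k-\nabla c_k w_k$ and the identity $\|\nabla c_k w_k\|^2=-s_k^\top\nabla c_k w_k$, which is algebraically equivalent to the paper's rearrangement (where the term $-\tfrac12\|\nabla c_k w_k\|^2$ appears with a negative sign and is simply dropped), and both yield the same constant $\kappa_c$.
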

\begin{proof}
    Under Assumptions~\ref{ass:bound}, \ref{ass:basic} and due to the level boundedness of the objective of the problem in \eqref{opt4skine-w} as well as Lemma \ref{lm:constraint_descent},  \(\{ s_k\} \) is a bounded sequence, thus has a uniform finite upper bound, denoted by \(\kappa_s\). We now consider \(\bar{s}_k = - \nabla c_k w_k + v_k\), where \((w_k, v_k)\) is determined through \eqref{opt4wu}. Obviously, $\bar s_k$ is feasible to the problem in \eqref{opt4skine-w}. Then by the optimality of \( s_k \), it follows that
    \be\begin{aligned}\label{bound:g1}
        \nabla f_k^\top s_k + \frac{1}{2}\|s_k\|^2 &= \frac{1}{2}\|s_k+\nabla f_k + \nabla c_k w_k\|^2 - \langle s_k , \nabla c_k w_k \rangle - \frac{1}{2} \|\nabla f_k + \nabla c_k w_k\|^2\\
        &\leq \frac{1}{2}\|\bar s_k+\nabla f_k + \nabla c_k w_k\|^2 - \langle s_k , \nabla c_k w_k \rangle - \frac{1}{2} \|\nabla f_k + \nabla c_k w_k\|^2\\
        &= \frac{1}{2}\|v_k+\nabla f_k\|^2 - \langle s_k , \nabla c_k w_k \rangle - \frac{1}{2} \|\nabla f_k + \nabla c_k w_k\|^2\\
        &\leq \nabla f_k^\top v_k + \frac{1}{2}\|v_k\|^2 - \langle s_k + \nabla f_k , \nabla c_k w_k \rangle - \frac{1}{2}\|\nabla c_k w_k\|^2\\
        &\leq L_f \|c_k\| + \frac{C}{2} \|c_k\| + 2L_c\nu^{-2} (\kappa_s + L_f) \|c_k\| = \kappa_c \|c_k\|,
    \end{aligned}\ee
    where the last inequality comes from $\|v_k\|\le \|c_k\|$, Assumptions \ref{ass:bound}, \ref{ass:basic}, as well as Lemma \ref{lm:constraint_descent}. 
\end{proof}

Lemma \ref{lm:bound_fsss} allows us to guarantee the boundedness of the sequences \(\{\rho_k\}\) and \(\{\eta_k\}\) generated by Algorithm \ref{alg3}. 

 \begin{lemma}
    Let  \(\{\rho_k\}\) and \(\{\eta_k\}\) be generated by Algorithm \ref{alg3} and suppose that the same conditions as in Lemma \ref{lm:bound_fsss} hold. Then it holds that 
    \be\label{lm:rhomax-etamin}
    \rho_k \leq  \bar{\rho}_{\rm max}:=\max\left\{ \rho_0, \frac{\kappa_c}{\vartheta} \right\} \mbox{ and } \eta_k\ge \bar\eta_{\min}:=\min\left\{\frac{\tau}{L_g^f + \bar\rho_{\max}L_g^c,1}\right\} \mbox{ for all } k \geq 0.
\ee
 \end{lemma}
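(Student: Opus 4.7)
The plan is to prove the two bounds sequentially, since the bound on $\{\eta_k\}$ is an immediate consequence of the bound on $\{\rho_k\}$ together with the definition \eqref{etak-2}. I would begin by fixing attention on the update \eqref{eq:rhok-2}, which says that $\rho_k$ is the maximum of $\rho_{k-1}$ and the ratio $(\nabla f_k^\top s_k + \tfrac{1}{2}\|s_k\|^2)/(\vartheta \|c_k\|)$ (with the convention that $\rho_k = \rho_{k-1}$ if $c_k = 0$, as in \eqref{eq:rhok}). The entire argument therefore reduces to controlling this ratio uniformly in $k$.

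Next I would invoke Lemma \ref{lm:bound_fsss}, which gives exactly what is needed: $\nabla f_k^\top s_k + \tfrac{1}{2}\|s_k\|^2 \le \kappa_c \|c_k\|$. Whenever $c_k \neq 0$, dividing by $\vartheta \|c_k\|$ yields
\[
\frac{\nabla f_k^\top s_k + \tfrac{1}{2}\|s_k\|^2}{\vartheta \|c_k\|} \le \frac{\kappa_c}{\vartheta},
\]
so the candidate value in the first branch of \eqref{eq:rhok-2} is at most $\kappa_c/\vartheta$. A simple induction on $k$ then finishes the first bound: the base case is $\rho_0 \le \bar{\rho}_{\max}$ by definition, and the inductive step uses $\rho_k \le \max\{\rho_{k-1}, \kappa_c/\vartheta\} \le \max\{\bar{\rho}_{\max}, \kappa_c/\vartheta\} = \bar{\rho}_{\max}$. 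The case $c_k = 0$ is trivial since $\rho_k = \rho_{k-1}$ then.

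For the second bound on $\eta_k$, I would plug the uniform bound $\rho_k \le \bar{\rho}_{\max}$ directly into the stepsize formula \eqref{etak-2}. Because the function $\rho \mapsto \tau/(L_g^f + \rho L_g^c)$ is monotonically decreasing in $\rho$, replacing $\rho_k$ by its upper bound yields
\[
\eta_k = \min\!\left\{\frac{\tau}{L_g^f + \rho_k L_g^c},\, 1\right\} \ge \min\!\left\{\frac{\tau}{L_g^f + \bar{\rho}_{\max} L_g^c},\, 1\right\} = \bar{\eta}_{\min},
\]
which is the desired lower bound. There is no real obstacle here; the only subtle point is confirming that the ratio in \eqref{eq:rhok-2} is actually bounded, and this is precisely the content of Lemma \ref{lm:bound_fsss}, whose proof already absorbs the effort (through the auxiliary feasible direction $\bar s_k = -\nabla c_k w_k + v_k$ and the bound $\|v_k\| \le \|c_k\|$). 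Once that lemma is in hand, the proof of the present lemma is a short, essentially mechanical deduction.
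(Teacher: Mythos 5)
Your proposal is correct and follows essentially the same route as the paper's proof: bound the ratio in \eqref{eq:rhok-2} by $\kappa_c/\vartheta$ via Lemma \ref{lm:bound_fsss}, conclude $\rho_k \le \bar\rho_{\max}$ by induction, and then read off the stepsize bound from the monotonicity of \eqref{etak-2}. You merely spell out the steps the paper compresses into two sentences.
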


 \begin{proof}
    It is easy to obtain from \eqref{eq:rhok-2} that 
$
    \rho_k \leq \max\{ \frac{\kappa_c}{\vartheta}, \rho_{k-1} \}.
$ 
Then by induction starting from \(\rho_0\) and the update scheme of $\eta_k$ in \eqref{etak-2} yields 
the conclusion.
 \end{proof}


So far, {we have extended Algorithm~\ref{alg1} to Algorithm~\ref{alg3} for problem \eqref{p3} with both equality and inequality constraints.} During the iterations of Algorithm~\ref{alg3}, the search direction $s_k$ is computed via  \eqref{opt4skine-w}, with $w_k$  ensuring feasibility.  Furthermore, under the strong MFCQ assumption, the adaptively updated  merit parameter $\rho_k$ and stepsize $\eta_k$ are uniformly bounded and  can ensure the descent property of the merit function. Next we will analyze   the convergence {property} of Algorithm~\ref{alg3} and the corresponding  iteration complexity bound to find an $\epsilon$-KKT point of \eqref{p3}.



The lemma below establishes the asymptotic convergence of the search directions $  \{ s_k \}  $ generated by  Algorithm~\ref{alg3}. 
\begin{lemma}\label{thm:det-convergence-2}
    Suppose that Assumptions \ref{ass:bound}, \ref{ass:basic} and \ref{ass:mfcq} hold. 
    Then Algorithm \ref{alg3} generates a sequence of search directions \(\{s_k\}\) satisfying
    \begin{align*}
     \sum_{k=0}^{K-1} \|s_k\|^2 
    \leq \frac{2(f_0 - f_{\rm low}+\bar \rho_{\rm max} C)}{\left(1-\tau\right)\bar \eta_{\rm min} }\quad \mbox{for all } K\ge 1, 
    \end{align*}
    where $\bar \rho_{\rm max} $ and $\bar \eta_{\rm min} $ are defined in \eqref{lm:rhomax-etamin}.
\end{lemma}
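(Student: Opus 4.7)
The argument will closely parallel the proof of Lemma \ref{thm:det-convergence}, with the descent inequality \eqref{descent-ine} replacing \eqref{descent} and the uniform bounds $\bar\rho_{\max}$, $\bar\eta_{\min}$ from \eqref{lm:rhomax-etamin} playing the role of $\rho_{\max}$, $\eta_{\min}$. The starting point is the per-iteration merit-function decrease \eqref{descent-ine}, namely
\[
\phi_{\rho_k}(x_{k+1}) \le \phi_{\rho_k}(x_k) - \tfrac{\eta_k(1-\tau)}{2}\|s_k\|^2,
\]
which has already packaged together the stepsize rule \eqref{etak-2}, the merit-parameter rule \eqref{eq:rhok-2}, and the linearized constraint decrease \eqref{re-lc}.

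The main technical step is to telescope across $k=0,\dots,K-1$ in spite of the fact that the merit parameter changes from one iterate to the next. Because \eqref{eq:rhok-2} makes $\{\rho_k\}$ non-decreasing and $\|c_{k+1}\| \ge 0$, one may write $\phi_{\rho_k}(x_{k+1}) = \phi_{\rho_{k+1}}(x_{k+1}) - (\rho_{k+1}-\rho_k)\|c_{k+1}\|$. Substituting this into the displayed descent inequality and summing yields
\[
\sum_{k=0}^{K-1} \tfrac{\eta_k(1-\tau)}{2}\|s_k\|^2 \le \phi_{\rho_0}(x_0) - \phi_{\rho_K}(x_K) + \sum_{k=0}^{K-1}(\rho_{k+1}-\rho_k)\|c_{k+1}\|,
\]
the first two terms on the right-hand side telescoping cleanly while the third collects the correction produced by the updates of $\rho_k$. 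This is essentially the only place at which the argument genuinely departs from the equality-constrained case, and it is where I expect the main bookkeeping care to be needed.

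To finish, I would bound each piece on the right-hand side. Assumption \ref{ass:bound} gives $\|c_k\| \le C$ for every $k$, so the correction sum is controlled by $C \sum_{k=0}^{K-1}(\rho_{k+1}-\rho_k) = C(\rho_K-\rho_0) \le C(\bar\rho_{\max}-\rho_0)$, using \eqref{lm:rhomax-etamin}. Combining with $\phi_{\rho_0}(x_0) \le f_0 + \rho_0 C$ and $\phi_{\rho_K}(x_K) \ge f_{\rm low}$ (the latter because the constraint penalty is non-negative and $f \ge f_{\rm low}$), the $\rho_0 C$ contributions cancel and one obtains
\[
\sum_{k=0}^{K-1} \tfrac{\eta_k(1-\tau)}{2}\|s_k\|^2 \le f_0 - f_{\rm low} + \bar\rho_{\max} C.
\]
Lower-bounding each $\eta_k$ by $\bar\eta_{\min}$ via \eqref{lm:rhomax-etamin} and rearranging then produces the claimed inequality. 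Apart from the merit-parameter bookkeeping noted in the previous paragraph, every ingredient is already in place from Lemmas \ref{lm:constraint_descent}--\ref{lm:bound_fsss} and the descent calculation \eqref{descent-ine}, so no new technical difficulty arises.
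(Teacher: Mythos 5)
Your proposal is correct and follows essentially the same route as the paper: both sum the per-iteration descent inequality \eqref{descent-ine}, use $\eta_k \ge \bar\eta_{\min}$, and absorb the varying merit parameter via $\rho_k \le \bar\rho_{\max}$ and $\|c_k\|\le C$ to reach $f_0 - f_{\rm low} + \bar\rho_{\max}C$. The only difference is that you spell out the Abel-summation bookkeeping for the non-decreasing $\rho_k$ explicitly, which the paper leaves implicit in its "rearranging the terms" step.
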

\begin{proof}
    Summing the inequality \eqref{descent-ine} from \(k=0\) to \(K-1\) and then rearranging the terms yield  
    \begin{align*}
    \sum_{k=0}^{K-1} \frac{\left(1-\tau\right)}{2} \|s_k\|^2 \leq \frac{1}{\bar \eta_{\rm min} } \sum_{k=0}^{K-1} \left(f_k+\rho_k \|c_k\| - f_{k+1} - \rho_k \|c_{k+1}\|\right)
    \leq \frac{f_0 - f_{\rm low}+\bar \rho_{\rm max} C}{\bar \eta_{\rm min} }.
    \end{align*}
    The desired result is derived.
\end{proof}

By leveraging the optimality condition for $s_k$ in each subproblem and Lemma~\ref{lm:constraint_descent}, we will derive the main theorems about  the global convergence of the KKT residual sequence and the iteration complexity of Algorithm \ref{alg2} to find an $\epsilon$-KKT point of \eqref{p3}, respectively. In order to  prove these theorems, we need the following lemma that demonstrates the uniform boundedness of Lagrange multiplier \(\{\mu_k\}\) and $\{\lambda_k\}$ in \eqref{multiplier} corresponding to the problem \eqref{opt4skine-w}.

\begin{lemma}\label{bnd_lam_mu}
    Suppose that  Assumptions \ref{ass:bound}, \ref{ass:basic} and \ref{ass:mfcq} hold. Then those vectors $\mu_k$ and $\lambda_k$ satisfying \eqref{multiplier} are uniformly bounded for all   $k\ge0$; that is,  there exists  $\kappa_{\mu}>0$ such that $$\|\mu_k\|\le \kappa_{\mu} \quad\mbox{and} \quad \|\lambda_k\|\le \kappa_{\lambda}:=\nu^{-2}L_c(\kappa_\mu+ L_f) \quad \mbox{ for all } k\ge 0.$$
\end{lemma}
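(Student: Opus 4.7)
The plan is to bound $\lambda_k$ in terms of $\mu_k$ first, then bound $\|\mu_k\|$ uniformly using a strong MFCQ direction supplied by Lemma \ref{lm:ass}. Starting from the first line of \eqref{multiplier}, I would multiply $s_k+\nabla f_k+\nabla c_k(w_k+\lambda_k)-\mu_k=0$ on the left by $(\nabla c_k^\top \nabla c_k)^{-1}\nabla c_k^\top$ and use the second line $\nabla c_k^\top s_k=-\nabla c_k^\top \nabla c_k w_k$ to cancel the $s_k+\nabla c_k w_k$ contribution, yielding the explicit formula $\lambda_k=(\nabla c_k^\top \nabla c_k)^{-1}\nabla c_k^\top(\mu_k-\nabla f_k)$. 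By Assumption \ref{ass:basic} and the singular-value lower bound in Assumption \ref{ass:mfcq}(i) (which gives $\|(\nabla c_k^\top\nabla c_k)^{-1}\|\le \nu^{-2}$), this immediately delivers $\|\lambda_k\|\le \nu^{-2}L_c(\|\mu_k\|+L_f)$, matching the claimed form $\kappa_\lambda=\nu^{-2}L_c(\kappa_\mu+L_f)$. So it only remains to exhibit a uniform bound $\kappa_\mu$ on $\|\mu_k\|$.

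For this second step, I would apply Lemma \ref{lm:ass} at $x':=x_k+s_k\ge 0$ to obtain a unit vector $z'_k$ with $\nabla c(x_k+s_k)^\top z'_k=0$ and $[z'_k]_j\ge \nu/2$ for every index $j$ with $0\le [x_k+s_k]_j\le \iota$, which includes every $j$ in the active set $\mathcal A_k:=\{j:[x_k+s_k]_j=0\}$. Complementarity in \eqref{multiplier} forces $[\mu_k]_j=0$ for $j\notin\mathcal A_k$, so $\mu_k\ge 0$ yields $\langle \mu_k,z'_k\rangle\ge \tfrac{\nu}{2}\|\mu_k\|_1\ge \tfrac{\nu}{2}\|\mu_k\|$. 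I would then project the first identity of \eqref{multiplier} onto $z'_k$ and estimate the terms on the right individually: $\langle s_k,z'_k\rangle$ and $\langle \nabla f_k,z'_k\rangle$ are controlled by $\kappa_s$ (Lemma \ref{lm:bound_fsss}) and $L_f$ respectively, while the Jacobian-weighted terms are treated by rewriting $\nabla c_k^\top z'_k=(\nabla c_k-\nabla c(x_k+s_k))^\top z'_k$ and invoking Assumption \ref{ass:basic} to obtain $\|\nabla c_k^\top z'_k\|\le L_g^c\|s_k\|\le L_g^c\kappa_s$, combined with Lemma \ref{lm:constraint_descent} and Assumption \ref{ass:bound} to bound $\|w_k\|\le \nu^{-2}(2-\vartheta)C$.

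Assembling these pieces and substituting the bound on $\|\lambda_k\|$ from the first step produces an affine inequality of the form $\bigl(\tfrac{\nu}{2}-L_g^c L_c\nu^{-2}\kappa_s\bigr)\|\mu_k\|\le \tilde c$, where $\tilde c$ depends only on $\kappa_s, L_f, L_c, L_g^c, C, \nu, \vartheta$. Solving for $\|\mu_k\|$ gives the desired uniform bound $\kappa_\mu$, which together with the first step completes the proof that $\|\lambda_k\|\le \kappa_\lambda$.

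The main obstacle is guaranteeing positivity of the coefficient $\tfrac{\nu}{2}-L_g^c L_c\nu^{-2}\kappa_s$ multiplying $\|\mu_k\|$; this is precisely why Lemma \ref{lm:ass} chooses the quantitative tolerance $\iota=\nu^5/(24\sqrt{d}L_cL_g^c(\nu^2+L_c^2))$, whose shape is tuned to absorb the Lipschitz error introduced by evaluating $\nabla c$ at $x_k$ rather than at $x_k+s_k$. If the direct comparison is not tight enough, a safe fallback is to replace $z'_k$ by its orthogonal projection $\tilde z_k$ onto $\ker(\nabla c_k^\top)$, for which $\nabla c_k^\top\tilde z_k=0$ holds exactly; since $\|z'_k-\tilde z_k\|\le \nu^{-1}L_g^c\|s_k\|$ is controlled by the same Lipschitz constant, the lower bound $[\tilde z_k]_j\ge \nu/4$ on $\mathcal A_k$ is preserved for $\|s_k\|$ within the range guaranteed by the feasible-point argument in Lemma \ref{lm:bound_fsss}, and the Jacobian terms then vanish identically. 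Either route yields the stated uniform constants $\kappa_\mu$ and $\kappa_\lambda$.
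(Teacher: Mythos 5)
Your first step is sound and coincides with what the paper does at the end of its own proof: from \eqref{multiplier} and $\nabla c_k^\top s_k=-\nabla c_k^\top\nabla c_k w_k$ one gets $\lambda_k=(\nabla c_k^\top\nabla c_k)^{-1}\nabla c_k^\top(\mu_k-\nabla f_k)$, hence $\|\lambda_k\|\le\nu^{-2}L_c(\|\mu_k\|+L_f)$, so everything reduces to bounding $\|\mu_k\|$. The gap is in your second step. Your final inequality has the form $\bigl(\tfrac{\nu}{2}-L_g^cL_c\nu^{-2}\kappa_s\bigr)\|\mu_k\|\le\tilde c$, and the coefficient of $\|\mu_k\|$ must be positive for the argument to close --- but nothing guarantees this. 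The constant $\kappa_s$ from Lemma \ref{lm:bound_fsss} is only a crude level-boundedness bound, of order $C+L_f+L_c\nu^{-2}C$; it has no reason to be smaller than $\nu^3/(2L_g^cL_c)$. The tolerance $\iota$ in Lemma \ref{lm:ass} is \emph{not} tuned for this purpose: it controls how far the MFCQ direction moves when small coordinates of the base point are perturbed, not the Lipschitz error $L_g^c\|s_k\|$ incurred by evaluating $\nabla c$ at $x_k$ instead of $x_k+s_k$. Your fallback fails for the same reason: projecting $z_k'$ onto $\ker(\nabla c_k^\top)$ perturbs it by up to $\nu^{-1}L_g^c\|s_k\|\le\nu^{-1}L_g^c\kappa_s$, which can exceed $\nu/4$, so the componentwise lower bound on the active set is not preserved. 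You would need $\|s_k\|$ to be small, but Lemma \ref{lm:bound_fsss} gives only an $O(1)$ bound valid for every $k$.

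The paper sidesteps this with a compactness/contradiction argument: assume $\|\mu_k\|\to\infty$ along a subsequence, extract a further subsequence with $x_k\to\bar x$ and $\mu_k/\|\mu_k\|\to\bar\mu$, use $s_k\to0$ (from the merit-function descent) and complementarity to show that every index with $[\bar\mu]_j>0$ is active at $\bar x$, and then test the normalized stationarity identity against the strong-MFCQ vector $z$ at $\bar x$. There the troublesome term $z^\top\nabla c_k\lambda_k/\|\mu_k\|=z^\top(\nabla c_k-\nabla c(\bar x))\lambda_k/\|\mu_k\|$ vanishes because $\nabla c_k\to\nabla c(\bar x)$ while $\|\lambda_k\|/\|\mu_k\|$ stays bounded; that is, the smallness your argument needs is obtained asymptotically along the subsequence rather than uniformly in $k$. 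The price is that $\kappa_\mu$ is only shown to exist, not computed explicitly. To repair your quantitative route you would have to either restrict to iterations where $\|s_k\|$ is below an explicit threshold (handling the remaining, finitely many or separately characterized, iterates by another means), or construct an MFCQ-type vector adapted to $\nabla c_k$ whose active-set components are bounded below independently of $\|s_k\|$; as written, the argument does not close.
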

\begin{proof}
See  Appendix \ref{proof:lmbnd}.
\end{proof}

\begin{theorem}[{\bf Global convergence of Algorithm \ref{alg3}}]\label{cor:det-convergence-2}
     Suppose that Assumptions \ref{ass:bound}, \ref{ass:basic} and \ref{ass:mfcq} hold. Then  there exist vectors \(\lambda_k \in \R^m\) and \(\mu_k \in \R^d_{\ge 0}\) for  \(k \ge 0\), such that
    \[
    \lim_{k \to \infty} \left( \|\nabla f_k + \nabla c_k \lambda_k - \mu_k\|^2 + \|c_k\|^2 + |\mu_k^\top x_k|^2 \right) = 0.
    \]
\end{theorem}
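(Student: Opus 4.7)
My plan is to combine three ingredients already proved in the paper: the square-summability of $\{\|s_k\|\}$ in Lemma \ref{thm:det-convergence-2}, the KKT optimality system \eqref{multiplier} of the subproblem \eqref{opt4skine-w}, and the uniform boundedness of the Lagrange multipliers in Lemma \ref{bnd_lam_mu}. The strategy is to identify the three residuals in the claim with quantities that are either directly $O(\|s_k\|)$ or are controlled through $w_k$, and then drive $s_k$, $w_k$, and $c_k$ to zero in that order.

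First, Lemma \ref{thm:det-convergence-2} immediately yields $\lim_{k\to\infty}\|s_k\|=0$. Then I would use the equality constraint from \eqref{multiplier}, namely $\nabla c_k^\top s_k = -\nabla c_k^\top\nabla c_k w_k$, together with Assumption \ref{ass:mfcq}(i), which gives $\lambda_{\min}(\nabla c_k^\top\nabla c_k)\ge \nu^2$, to obtain
\[
\|w_k\| \;\le\; \nu^{-2}\|\nabla c_k^\top\nabla c_k w_k\| \;=\; \nu^{-2}\|\nabla c_k^\top s_k\| \;\le\; \nu^{-2}L_c\|s_k\|\;\longrightarrow\;0.
\]
To pass from $w_k\to 0$ to $c_k\to 0$, on indices where $c_k\ne 0$ and $w_k\ne 0$, Lemma \ref{lm:constraint_descent} combined with the reverse triangle inequality gives $\vartheta\|c_k\|\le \|\nabla c_k^\top\nabla c_k w_k\|\le L_c^2\|w_k\|$, so $\|c_k\|\to 0$ along such a subsequence; the indices with $c_k=0$ are trivially fine, and indices where $c_k\ne 0$ but $w_k=0$ must be handled separately by observing that optimality of $w=0$ in \eqref{opt4wu} together with strong MFCQ (which provides a feasible descent direction for the subproblem objective whenever $c_k\ne 0$) forces $c_k=0$, a contradiction, so these indices are empty.

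Finally, define the multipliers asserted by the theorem by setting $\tilde\lambda_k:=w_k+\lambda_k$ with $\lambda_k,\mu_k$ the multipliers guaranteed by \eqref{multiplier}. The first line of \eqref{multiplier} rewrites as $\nabla f_k+\nabla c_k\tilde\lambda_k-\mu_k=-s_k$, so the stationarity residual equals $\|s_k\|\to 0$. For the complementarity residual, $(x_k+s_k)^\top\mu_k=0$ with $\mu_k\ge 0$ gives $\mu_k^\top x_k=-\mu_k^\top s_k$, and by Lemma \ref{bnd_lam_mu} we have $|\mu_k^\top x_k|\le \kappa_\mu\|s_k\|\to 0$. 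Together with $c_k\to 0$, this establishes the claimed limit.

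The main obstacle I anticipate is the $w_k\to 0 \Rightarrow c_k\to 0$ step: it relies on Lemma \ref{lm:constraint_descent}, whose statement is conditioned on $w_k\ne 0$, and so requires the brief ancillary argument above to eliminate the degenerate indices where $w_k=0$ with $c_k\ne 0$. All the remaining steps are straightforward substitutions into \eqref{multiplier} using the summability of $\|s_k\|^2$ and the boundedness of the multipliers.
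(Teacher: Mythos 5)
Your proof is correct and follows essentially the same route as the paper's: read off the multipliers from the KKT system \eqref{multiplier} of the direction-finding subproblem, bound all three residuals by a constant times $\|s_k\|$ (the paper keeps $\lambda_k$ and bounds $\|s_k+\nabla c_k w_k\|\le(1+L_c^2\nu^{-2})\|s_k\|$, whereas you fold $w_k$ into the multiplier and reach $\|c_k\|\lesssim\|s_k\|$ via $\|w_k\|$ rather than via $\|c_k+\nabla c_k^\top s_k\|$ — cosmetic differences), and then invoke the square-summability of $\{\|s_k\|^2\}$ from Lemma \ref{thm:det-convergence-2}. Your ancillary argument for the degenerate indices with $w_k=0$ and $c_k\neq 0$ is also sound, since the feasible point constructed in the proof of Lemma \ref{lm:constraint_descent} strictly decreases the subproblem objective whenever $c_k\neq 0$; this in fact patches the ``$w_k\neq0$'' hypothesis that the paper's own use of that lemma quietly ignores.
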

\begin{proof}
    It follows from the optimality of $s_k$ that  
    there exist  \(\lambda_k \in \R^m\) and  \(\mu_k \in \R^d_{\ge 0}\) such that \eqref{multiplier} holds. 
    Thanks to Assumption \ref{ass:cq}, we have \(w_k = -(\nabla c_k^\top \nabla c_k)^{-1} \nabla c_k^\top s_k\), thus 
    \begin{align}\label{kkt-sta}
        \|\nabla f_k + \nabla c_k \lambda_k - \mu_k\| = \|s_k + \nabla c_k w_k\| \leq \left(1+L_c^2\nu^{-2}\right)\|s_k\|.
    \end{align}
    On the other hand, for the feasibility measure, we obtain from Lemma \ref{lm:constraint_descent} and \(\nabla c_k^\top s_k = - \nabla c_k^\top \nabla c_k w_k\) that 
    \[
    \|c_k\|-L_c\|s_k\| \leq \|c_k + \nabla c_k^\top s_k\| \leq (1-\vartheta)\|c_k\|
    \]
    which indicates that 
    \begin{align*}
        \|c_k\| \leq L_c\vartheta^{-1}\|s_k\|.
    \end{align*} 
    Recall that in Lemma \ref{bnd_lam_mu}, we have proved that $\|\mu_k\|\le \kappa_\mu$. 
    Then, the complementary slackness measure can be bounded by
    \begin{align}\label{kkt-com}
    |\mu_k^\top x_k| = |\mu_k^\top s_k| \leq \kappa_\mu \|s_k\|.
    \end{align}
    The desired result holds from \(\{\|s_k\|\}\to 0\), indicated by \eqref{descent-ine} and Assumption \ref{ass:bound}. 
\end{proof}


\begin{theorem}[{\bf Iteration complexity of Algorithm \ref{alg3}}]\label{cor:det-complexity-2}
     Suppose that Assumptions \ref{ass:bound}, \ref{ass:basic} and \ref{ass:mfcq} hold. Then  for any \(\epsilon \in (0,1)\),  Algorithm \ref{alg3} reaches an $\epsilon$-KKT  point \(x_k \ge 0\) within $K$ iterations; that is, there exist \(\lambda_k \in \R^m\) and \(\mu_k \in \R^d_{\ge 0}\) such that
    \[
    \|\nabla f_k + \nabla c_k \lambda_k - \mu_k\| \leq \epsilon, \quad \|c_k\| \leq \epsilon \quad{and}\quad |\mu_k^\top x_k| \leq \epsilon \mbox{ with } \|\mu_k\|\le \kappa_\mu, 
    \]
    where 
    \[
    K= 2(1-\tau)^{-1}\left(\frac{f_0-f_{\rm low}+\bar \rho_{\rm max}C}{\bar \eta_{\rm min}}\right)\max\left\{\left(1+\frac{L_c^2}{\nu^2}\right)^2,\left(\frac{L_g^c}{\vartheta}\right)^2,\kappa_\mu^2\right\}\epsilon^{-2},
    \]
    with $\bar \rho_{\rm max} $ and $\bar \eta_{\rm min}$  defined in \eqref{lm:rhomax-etamin}, $\vartheta$  
    and \(\kappa_\mu\)  introduced in Lemma \ref{bnd_lam_mu}.
\end{theorem}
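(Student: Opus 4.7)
The plan is to derive the complexity bound as a quantitative corollary of the global convergence analysis underlying Theorem \ref{cor:det-convergence-2}, with the summability estimate in Lemma \ref{thm:det-convergence-2} providing the key ingredient. First I would invoke Lemma \ref{thm:det-convergence-2} to obtain $\sum_{k=0}^{K-1}\|s_k\|^2 \le \frac{2(f_0-f_{\rm low}+\bar\rho_{\max}C)}{(1-\tau)\bar\eta_{\min}}$, and then apply the standard averaging (pigeonhole) argument to produce an index $k^\ast \in \{0,\ldots,K-1\}$ at which $\|s_{k^\ast}\|^2$ is at most $\frac{2(f_0-f_{\rm low}+\bar\rho_{\max}C)}{(1-\tau)\bar\eta_{\min}K}$. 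With $K$ as chosen in the theorem, this upper bound simplifies to $\epsilon^2/M$, where $M := \max\{(1+L_c^2/\nu^2)^2,\,(L_g^c/\vartheta)^2,\,\kappa_\mu^2\}$, so $\|s_{k^\ast}\| \le \epsilon/\sqrt{M}$.

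Next I would translate the smallness of $\|s_{k^\ast}\|$ into the three KKT residuals at $x_{k^\ast}$ using the pointwise bounds already derived inside the proof of Theorem \ref{cor:det-convergence-2}. The stationarity estimate rests on the KKT conditions \eqref{multiplier} of the subproblem \eqref{opt4skine-w}: Assumption \ref{ass:cq} allows $w_{k^\ast} = -(\nabla c_{k^\ast}^\top \nabla c_{k^\ast})^{-1}\nabla c_{k^\ast}^\top s_{k^\ast}$, which substituted into the first line of \eqref{multiplier} yields $\|\nabla f_{k^\ast} + \nabla c_{k^\ast}\lambda_{k^\ast} - \mu_{k^\ast}\| \le (1+L_c^2/\nu^2)\|s_{k^\ast}\|$. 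The feasibility estimate combines the equality constraint $\nabla c_{k^\ast}^\top s_{k^\ast} = -\nabla c_{k^\ast}^\top \nabla c_{k^\ast}w_{k^\ast}$ of \eqref{opt4skine-w} with the contraction supplied by Lemma \ref{lm:constraint_descent} and the reverse triangle inequality to yield $\|c_{k^\ast}\|\le (L_g^c/\vartheta)\|s_{k^\ast}\|$. The complementary slackness estimate uses $(x_{k^\ast}+s_{k^\ast})^\top \mu_{k^\ast} = 0$ with $\mu_{k^\ast} \ge 0$ from \eqref{multiplier}, so $|\mu_{k^\ast}^\top x_{k^\ast}| = |\mu_{k^\ast}^\top s_{k^\ast}| \le \|\mu_{k^\ast}\|\,\|s_{k^\ast}\| \le \kappa_\mu \|s_{k^\ast}\|$ by Lemma \ref{bnd_lam_mu}.

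Combining these three inequalities with $\|s_{k^\ast}\| \le \epsilon/\sqrt{M}$ simultaneously produces $\|\nabla f_{k^\ast} + \nabla c_{k^\ast}\lambda_{k^\ast} - \mu_{k^\ast}\| \le \epsilon$, $\|c_{k^\ast}\| \le \epsilon$ and $|\mu_{k^\ast}^\top x_{k^\ast}| \le \epsilon$; the nonnegativity $x_{k^\ast} \ge 0$ is preserved throughout the iterations because $x_0 \ge 0$, $\eta_k \in (0,1]$ by \eqref{etak-2}, and $x_k + s_k \ge 0$ is enforced in \eqref{opt4skine-w}. Because the genuinely delicate work, namely the uniform multiplier bound under the strong MFCQ (Lemma \ref{bnd_lam_mu}) and the linearized constraint descent (Lemma \ref{lm:constraint_descent}), has already been absorbed into the preceding lemmas, the remaining step is essentially bookkeeping: one only has to track the three constants $1+L_c^2/\nu^2$, $L_g^c/\vartheta$ and $\kappa_\mu$ when combining them into $M$, and the announced expression for $K$ drops out immediately.
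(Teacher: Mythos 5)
Your proposal is correct and follows essentially the same route as the paper: Lemma \ref{thm:det-convergence-2} plus the pigeonhole argument to locate an index with $\|s_{k^\ast}\|^2\le \epsilon^2/M$, then the three pointwise residual bounds \eqref{kkt-sta}--\eqref{kkt-com} (stationarity via \eqref{multiplier} and the expression for $w_k$, feasibility via Lemma \ref{lm:constraint_descent}, complementarity via Lemma \ref{bnd_lam_mu}). The only remark is that the feasibility bound actually derived in the paper is $\|c_k\|\le L_c\vartheta^{-1}\|s_k\|$, so the constant $L_g^c/\vartheta$ you (and the theorem statement) carry is a notational inconsistency inherited from the paper rather than an error in your argument.
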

\begin{proof}
The conclusion can be straightly derived from \eqref{kkt-sta}-\eqref{kkt-com} as well as Lemma \ref{thm:det-convergence-2}.
\end{proof}

Theorem~\ref{cor:det-complexity-2} establishes that, for nonconvex  optimization with both equality and inequality constraints and  under the strong MFCQ assumption,  Algorithm~\ref{alg3} achieves an $\epsilon$-KKT point with an iteration  complexity in order $O(\epsilon^{-2})$. 
Lastly, we provide a comparison between our algorithm and  the one in \cite{curtis2024sequential}, which updates \(s_k\) through
\be\label{opt4skine-w-curtis}
\begin{aligned}
    s_k = \argmin_{s}&\quad \frac{1}{2}\|s+\nabla f_k \|^2 \\
    ~{\rm s.t.}& \quad\nabla c_k^\top s  = -\nabla c_k^\top \nabla c_k w_k,~
    x_k + s \geq 0,
\end{aligned}
\ee
where \(w_k\) is computed by 
\be\begin{aligned}\label{opt4wu-curtis}
(w_k,v_k) = \argmin_{w \in \R^m,~v \in \R^d} &\quad \frac{1}{2}\|c_k -\nabla c_k^\top \nabla c_k w\|^2 + \frac{\mu}{2}\|v\|^2\\
{\rm s.t.}&\quad \nabla c_k^\top v = 0,~~x_k - \nabla c_k w + v \geq 0.
\end{aligned}\ee
The objectives of problems in \eqref{opt4skine-w} and \eqref{opt4skine-w-curtis} differ only by a constant, with the main distinction lying in the computation of \(w_k\). In \cite{curtis2024sequential}, the term \(\|v\|\) is incorporated into the objective function in a regularized form, as shown in \eqref{opt4wu-curtis}, whereas we explicitly include it in the constraint function of \eqref{opt4wu}. This modification enables us to achieve sufficient descent in the linearized  constraint violation under the strong MFCQ, satisfying \(\|c_k + \nabla c_k^\top s_k\| \leq (1 - \vartheta) \|c_k\|\) with \(\vartheta \in(0,1)\), a condition directly assumed in \cite{curtis2024sequential} without assuming constraint qualifications on inequality constraints. Thus, we actually provide a sufficient condition for the sufficient descent in constraint violation, namely that the strong MFCQ holds and \(w_k\) are computed via  \eqref{opt4wu}. Furthermore, this modification allows for a more precise analysis of the upper bound of the merit parameter \(\rho_k\) and the complexity result under strong MFCQ, which however is not provided in  \cite{curtis2024sequential}. 

\section{Stochastic adaptive directional decomposition methods}\label{sec:full-sto-method}

When it comes to the stochastic setting of \eqref{p} with $f$ and $c$ defined in \eqref{inq-cons}, i.e,  $f(x)=\E_\xi[F(x;\xi)]$ and $ c(x)=\E_\xi[C(x;\zeta)]$, 
accurately computing the  information of the objective and constraint functions is generally difficult, leading us to rely on stochastic approximations as an effective alternative. More specifically, when extending Algorithm \ref{alg3} to adjust to stochastic settings, the true values  (\( \nabla f_k, \nabla c_k , c_k \)) at iterates are not available while only  stochastic estimates   (\( \tilde{\nabla} f_k \), \( \tilde{\nabla} c_k, \tilde c_k) \) can be accessed. 

Inspired by \eqref{opt4skine-w} and \eqref{opt4wu}, at the $k$-th iteration  we compute the stochastic {search direction} $\tilde{s}_k$ as follows:
\be\begin{aligned}\label{opt4skine-w-s}
\tilde s_k = \argmin_{s\in\R^d} &\quad \frac{1}{2}\|s+\tilde \nabla f_k + \tilde \nabla c_k \tilde w_k\|^2 \\
~{\rm s.t.} &\quad \tilde \nabla c_k^\top s  = -\tilde \nabla c_k^\top \tilde \nabla c_k \tilde w_k,~
x_k + s \geq 0,
\end{aligned}\ee
where $\tilde w_k$ solves 
\begin{equation}\label{sto-opt4wu}
\min_{w}\quad \frac{1}{2}\| \tilde\vartheta_k\tilde c_k - \tilde \nabla  c_k^\top \tilde \nabla c_k w\|^2
\end{equation}
with $\tilde\vartheta_k\in(0,1].$ 
And in analogy to the update scheme \eqref{eq:rhok-2} and \eqref{etak-2}  in deterministic setting, we define the adaptive update rules for the merit parameter \(\tilde{\rho}_k\) and stepsize \(\tilde{\eta}_k\)  as
\begin{align}\label{eq:sto_rhok_etak}
    \tilde \rho_k = \max \left\{ \frac{\tilde \nabla f_k^\top \tilde s_k + \frac{1}{2}\|\tilde s_k\|^2}{\tilde \vartheta_k \|\tilde c_k\|}, \tilde \rho_{k-1} \right\}~~{\rm and}~\tilde \eta_k = \min \left\{\frac{\tau}{L_g^f + \tilde \rho_k (L_g^c+1)}, 1\right\}
\end{align}
with $\tilde \vartheta_k\in(0,1)$ and $\tau \in (0,\frac{1}{2})$. 

We now present the framework of the stochastic adaptive directional decomposition method for solving the problem \eqref{p}-\eqref{inq-cons}. Throughout this section, \(f\)
 and \(c\) are defined as in \eqref{inq-cons} by default, and this will not be repeated further.  

\begin{algorithm}[H]
  \caption{Stochastic adaptive directional decomposition method for \eqref{p}-\eqref{inq-cons}} 
  \label{alg2}
  \begin{algorithmic}[1] 
    \REQUIRE \(x_0, \rho_0\).
    \FOR{$k=0,1,\ldots$} 
    \STATE Compute \(\tilde s_k\) via \eqref{opt4skine-w-s}.
        \STATE Compute \(\tilde \eta_k\) via \eqref{eq:sto_rhok_etak}.
        \STATE Update \(x_{k+1} = x_k + \tilde \eta_k \tilde s_k\).
    \ENDFOR 
  \end{algorithmic} 
\end{algorithm}

To analyze the theoretical properties of Algorithm \ref{alg2}, we will first outline the conditions that  stochastic estimates  must satisfy and  defer discussions of the preprocessing to generate them.




\begin{assumption}\label{ass:estimates}
    The following two statements hold.
    \begin{itemize}
        \item[(i)] For any \(k \ge 0\), there exist a positive constant {\(\sigma_{\rm max}\)} and {$\tilde \sigma_k^f, \tilde \sigma_k^c, \tilde \sigma_k^v \in(0, \sigma_{\rm max})$} such that 
    \be\label{error-bnd}
     \|\tilde \nabla f_k - \nabla f_k\| \leq \tilde \sigma_k^f,\ \ ~\|\tilde \nabla c_k - \nabla c_k\| \leq \tilde \sigma_k^c,\ \ ~\|\tilde c_k - c_k\| \leq \tilde \sigma_k^v.
    \ee
    \item[(ii)] There exists a positive  constant \(\tilde \nu \) such that for any $k \ge 0$, 
    \begin{enumerate}
    \item[(a)] the singular values of \(\tilde \nabla c_k\) are lower  bounded by \(\tilde \nu\);
    \item[(b)] there exists a vector \(\tilde z_k \in \R^d\) with \(\|\tilde z_k\|=1\) satisfying 
    \begin{align*}
    \tilde \nabla c_i(x_k)^\top \tilde z_k & = 0 \quad  \mbox{for all } i = 1, \ldots, m,\\
    [\tilde z_k]_j & \ge \tilde \nu \quad \mbox{for all } j \in \{ j : [x_k]_j = 0 \}.
    \end{align*}
\end{enumerate}
    \end{itemize}
\end{assumption}

{Assumption \ref{ass:estimates} ensures that the stochastic approximations of the objective and constraint functions are sufficiently accurate and the stochastic constraints maintain a uniform regularity property. In particular, condition (i) bounds the estimation errors of the stochastic gradients and constraint values, while condition (ii) guarantees that the stochastic constraint Jacobian remains well-conditioned and that a stochastic variant of the MFCQ holds. These conditions can be satisfied, for instance, when mini-batch samples are sufficiently large or when variance reduction techniques are employed.}

Under Assumption \ref{ass:estimates},  
$\tilde w_k$, introduced in \eqref{sto-opt4wu}, admits the following closed-form expression:
\begin{equation}\label{eq:wk-explicit}
    \tilde w_k = \tilde \vartheta_k 
    (\tilde \nabla c_k^\top \tilde \nabla c_k)^{-1} 
    \tilde c_k.
\end{equation}
From now on, we suppose that \(\tilde{\vartheta}_k\equiv \tilde\vartheta\) is a fixed constant satisfying
\begin{equation}\label{tild-theta}
0 < \tilde\vartheta \leq \min \left\{ \frac{\bar a \tilde{\nu}}{4 (C + \sigma_{\rm max}) (L_c + \sigma_{\rm max}) \tilde{\nu}^{-2} + \bar a \tilde{\nu}}, \frac{\bar a}{2 (C + \sigma_{\rm max}) (L_c + \sigma_{\rm max}) \tilde{\nu}^{-2}}, 1 \right\}
\end{equation}
with \(\bar a\) introduced in Lemma \ref{lm:constraint_descent}.
Then by choosing the  vector $\tilde z_k$ satisfying Assumption~\ref{ass:estimates} 
and defining
\(
    \tilde{v}_k = (1 - \tilde{\vartheta}) \bar{a} \tilde{z}_k / 2 \cdot \min \left\{ 1, \|\tilde{c}_k\| \right\},
\)
we obtain that 
\(
    s = -\tilde \nabla c_k \tilde w_k + \tilde v_k
\)
is feasible to the problem in~\eqref{opt4skine-w-s}. 
Hence, \eqref{opt4skine-w-s} is also well-defined. Moreover, the lemma below shows that 
 a descent of the linearized constraint violation can be guaranteed along $\tilde s_k$.
\begin{lemma}\label{up-w-s}
Given $x_k\ge0$ with $c_k\neq0$, 
suppose that Assumptions \ref{ass:bound}, \ref{ass:basic} and \ref{ass:estimates} hold. 
Then it holds that 
\begin{align}\label{cor:constraint_descent}
        \|\tilde c_k +\tilde \nabla c_k^\top \tilde s_k\| = (1-\tilde \vartheta)\|\tilde c_k\|~~{\rm and}~~\|\tilde w_k\|\leq \tilde \nu^{-2}\tilde \vartheta \|\tilde c_k\|.
    \end{align}
    where $\tilde \vartheta$ is defined in \eqref{tild-theta}. 
\end{lemma}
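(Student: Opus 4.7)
The plan is to exploit two facts that are essentially already in hand: the closed-form expression for $\tilde w_k$ given in \eqref{eq:wk-explicit}, and the equality constraint in the subproblem \eqref{opt4skine-w-s} defining $\tilde s_k$. Since \eqref{sto-opt4wu} is an unconstrained least-squares problem and Assumption~\ref{ass:estimates}(ii)(a) guarantees that $\tilde\nabla c_k$ has full column rank, $\tilde\nabla c_k^\top\tilde\nabla c_k$ is invertible, so $\tilde w_k=\tilde\vartheta(\tilde\nabla c_k^\top\tilde\nabla c_k)^{-1}\tilde c_k$. The key cancellation is that the matrix $\tilde\nabla c_k^\top\tilde\nabla c_k$ appearing in the constraint of \eqref{opt4skine-w-s} will cancel its inverse in the closed form of $\tilde w_k$.

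First I would read off from the equality constraint of \eqref{opt4skine-w-s} that $\tilde\nabla c_k^\top\tilde s_k=-\tilde\nabla c_k^\top\tilde\nabla c_k\tilde w_k$. Substituting the closed-form expression for $\tilde w_k$ and simplifying, this collapses to $\tilde\nabla c_k^\top\tilde s_k=-\tilde\vartheta\tilde c_k$. Adding $\tilde c_k$ to both sides gives $\tilde c_k+\tilde\nabla c_k^\top\tilde s_k=(1-\tilde\vartheta)\tilde c_k$, and taking norms (and using $\tilde\vartheta\in(0,1)$, so $1-\tilde\vartheta\ge0$) yields the first identity in \eqref{cor:constraint_descent}.

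Next I would bound $\|\tilde w_k\|$ directly from \eqref{eq:wk-explicit}. By submultiplicativity, $\|\tilde w_k\|\le\tilde\vartheta\,\|(\tilde\nabla c_k^\top\tilde\nabla c_k)^{-1}\|\,\|\tilde c_k\|$. Since the singular values of $\tilde\nabla c_k$ are bounded below by $\tilde\nu$ (Assumption~\ref{ass:estimates}(ii)(a)), we have $\lambda_{\min}(\tilde\nabla c_k^\top\tilde\nabla c_k)\ge\tilde\nu^{2}$, hence $\|(\tilde\nabla c_k^\top\tilde\nabla c_k)^{-1}\|\le\tilde\nu^{-2}$. Combining these gives $\|\tilde w_k\|\le\tilde\nu^{-2}\tilde\vartheta\|\tilde c_k\|$, the second assertion.

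I do not anticipate a real obstacle here: the lemma is essentially a consequence of the explicit form of $\tilde w_k$ together with the feasibility condition of \eqref{opt4skine-w-s}. The conditions $c_k\neq 0$, the boundedness assumptions, and the constant $\tilde\vartheta$ chosen via \eqref{tild-theta} are not needed for this particular statement (the choice of $\tilde\vartheta$ is calibrated to enable later arguments, not this direct one); only Assumption~\ref{ass:estimates}(ii)(a), which enforces invertibility and provides the uniform spectral bound, is essential.
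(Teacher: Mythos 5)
Your proof is correct and matches what the paper actually does: the closed form \eqref{eq:wk-explicit} (valid because \eqref{sto-opt4wu} is an unconstrained least-squares problem and Assumption~\ref{ass:estimates}(ii)(a) makes $\tilde\nabla c_k^\top\tilde\nabla c_k$ invertible) substituted into the equality constraint of \eqref{opt4skine-w-s} gives $\tilde\nabla c_k^\top\tilde s_k=-\tilde\vartheta\tilde c_k$ and hence the stated identity, while the bound on $\|\tilde w_k\|$ is the same spectral estimate $\|(\tilde\nabla c_k^\top\tilde\nabla c_k)^{-1}\|\le\tilde\nu^{-2}$; if anything your derivation is more transparent than the paper's terse cross-reference to the deterministic Lemma~\ref{lm:constraint_descent}, and it correctly explains why the conclusion here is an equality rather than an inequality. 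The only slight overstatement is your closing remark that the calibration of $\tilde\vartheta$ in \eqref{tild-theta} is not needed: it is needed (together with Assumption~\ref{ass:estimates}(ii)(b)) to guarantee that the feasible set of \eqref{opt4skine-w-s} is nonempty, i.e.\ that $\tilde s_k$ exists at all, which your argument implicitly presupposes.
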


\begin{proof}
For the second part of the conclusion, it obviously follows from Assumption \ref{ass:estimates}. For the first part, the proof is basically same as that of Lemma \ref{lm:constraint_descent}, except replacing $C$ and $L_c$ with $C+\tilde \sigma_k^v$ and $L_c + \tilde \sigma_k^c$, respectively, as the upper bounds on the stochastic estimates $\tilde c_k$ and $\tilde\nabla c_k$, and replacing $\nu$ as $\tilde\nu$ following Assumption \ref{ass:estimates}. 
\end{proof}

In the next lemma, we  show that $\tilde s_k$, defined by \eqref{opt4skine-w-s}, is upper bounded, which will serve as a key to prove the boundedness of merit parameters and stepsizes. 
\begin{lemma}\label{tilde_kapp}
Under Assumptions \ref{ass:bound}, \ref{ass:basic} and \ref{ass:estimates}, there exists a constant $\tilde \kappa_s>0$ such that   
\[\mbox{$\|\tilde s_k\| \leq \tilde \kappa_s$}\quad\mbox{and}\quad \tilde \nabla f_k^\top \tilde s_k + \frac{1}{2}\|\tilde s_k\|^2  \le \tilde\kappa_c\|\tilde c_k\| \quad \mbox{for any } k\ge0,\]
where 
\(\tilde\kappa_c = (L_f+\sigma_{\rm max})  + {(C+\sigma_{\rm max})}/{2} + 2(L_c+\sigma_{\rm max})\tilde \nu^{-2} (\tilde \kappa_s + L_f + \sigma_{\rm max})\).
\end{lemma}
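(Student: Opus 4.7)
The plan is to mirror the proof of Lemma \ref{lm:bound_fsss} in the stochastic setting, with two crucial substitutions: (i) use the surrogate feasible point $\bar{\tilde s}_k := -\tilde\nabla c_k \tilde w_k + \tilde v_k$ with $\tilde v_k = (1-\tilde\vartheta)\bar a \tilde z_k / 2 \cdot \min\{1, \|\tilde c_k\|\}$ (already verified to be feasible to \eqref{opt4skine-w-s} in the discussion preceding Lemma \ref{up-w-s}), and (ii) upgrade the constant bounds on $\nabla f_k$, $\nabla c_k$, $c_k$, $\nu$ to their stochastic analogues $L_f+\sigma_{\rm max}$, $L_c+\sigma_{\rm max}$, $C+\sigma_{\rm max}$, $\tilde\nu$ guaranteed by Assumptions \ref{ass:bound}, \ref{ass:basic} and \ref{ass:estimates} together with Lemma \ref{up-w-s}.

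For the first bound $\|\tilde s_k\| \leq \tilde\kappa_s$, I would argue as follows. The subproblem \eqref{opt4skine-w-s} has a strongly convex quadratic objective with unit Hessian, so its level sets are bounded. The optimality of $\tilde s_k$ against the feasible point $\bar{\tilde s}_k$ gives
\[
\tfrac{1}{2}\|\tilde s_k + \tilde\nabla f_k + \tilde\nabla c_k \tilde w_k\|^2 \le \tfrac{1}{2}\|\tilde v_k + \tilde\nabla f_k\|^2,
\]
which together with the triangle inequality yields $\|\tilde s_k\| \le \|\tilde v_k+\tilde\nabla f_k\| + \|\tilde\nabla f_k + \tilde\nabla c_k \tilde w_k\|$. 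Each term on the right is uniformly bounded: $\|\tilde v_k\| \le \bar a/2$ by construction, $\|\tilde\nabla f_k\| \le L_f+\sigma_{\rm max}$, $\|\tilde\nabla c_k\| \le L_c+\sigma_{\rm max}$, and $\|\tilde w_k\| \le \tilde\nu^{-2}\tilde\vartheta (C+\sigma_{\rm max})$ from \eqref{eq:wk-explicit} and Lemma \ref{up-w-s}. This yields an explicit finite $\tilde\kappa_s$.

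For the second bound, I would reproduce the algebraic identity used in \eqref{bound:g1}, namely
\[
\tilde\nabla f_k^\top \tilde s_k + \tfrac{1}{2}\|\tilde s_k\|^2 = \tfrac{1}{2}\|\tilde s_k + \tilde\nabla f_k + \tilde\nabla c_k \tilde w_k\|^2 - \langle \tilde s_k, \tilde\nabla c_k \tilde w_k\rangle - \tfrac{1}{2}\|\tilde\nabla f_k + \tilde\nabla c_k \tilde w_k\|^2,
\]
and then substitute the optimality inequality $\tfrac{1}{2}\|\tilde s_k+\tilde\nabla f_k+\tilde\nabla c_k \tilde w_k\|^2 \le \tfrac{1}{2}\|\tilde v_k + \tilde\nabla f_k\|^2$. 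After expanding $\tfrac{1}{2}\|\tilde v_k+\tilde\nabla f_k\|^2 = \tfrac{1}{2}\|\tilde v_k\|^2 + \tilde\nabla f_k^\top \tilde v_k + \tfrac{1}{2}\|\tilde\nabla f_k\|^2$ and rearranging exactly as in \eqref{bound:g1}, one obtains
\[
\tilde\nabla f_k^\top \tilde s_k + \tfrac{1}{2}\|\tilde s_k\|^2 \le \tilde\nabla f_k^\top \tilde v_k + \tfrac{1}{2}\|\tilde v_k\|^2 - \langle \tilde s_k + \tilde\nabla f_k, \tilde\nabla c_k \tilde w_k\rangle - \tfrac{1}{2}\|\tilde\nabla c_k \tilde w_k\|^2.
\]
Bounding each term with $\|\tilde v_k\| \le \|\tilde c_k\|$, Cauchy–Schwarz, $\|\tilde s_k\| \le \tilde\kappa_s$, $\|\tilde\nabla f_k\| \le L_f + \sigma_{\rm max}$, $\|\tilde\nabla c_k\| \le L_c + \sigma_{\rm max}$ and $\|\tilde w_k\| \le \tilde\nu^{-2}\tilde\vartheta \|\tilde c_k\|$ produces the claimed constant $\tilde\kappa_c$.

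The main obstacle I anticipate is keeping the bound $\|\tilde v_k\| \le \|\tilde c_k\|$ (rather than a generic constant) because this is what preserves the linear dependence on $\|\tilde c_k\|$ in the target inequality. The construction $\tilde v_k = (1-\tilde\vartheta)\bar a \tilde z_k / 2 \cdot \min\{1, \|\tilde c_k\|\}$ gives $\|\tilde v_k\| \le \bar a/2$ for uniform boundedness but also $\|\tilde v_k\| \le \|\tilde c_k\|$ whenever $\|\tilde c_k\| \le 1$. A case split on the size of $\|\tilde c_k\|$ (small versus bounded away from zero) handles this cleanly: when $\|\tilde c_k\|$ is large, the bound $\|\tilde v_k\|\le \bar a/2$ already yields a constant factor times $\|\tilde c_k\|$; when $\|\tilde c_k\|\le 1$, the scaling $\min\{1,\|\tilde c_k\|\}$ makes $\|\tilde v_k\|^2 \lesssim \|\tilde c_k\|$ directly. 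This is precisely the point where Assumption \ref{ass:estimates}(ii) and the construction of $\tilde z_k$ are essential, since they supply a uniformly admissible MFCQ direction at the perturbed iterate, replacing the role of Lemma \ref{lm:ass} in the deterministic analysis.
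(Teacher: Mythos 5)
Your proposal is correct and follows essentially the same route as the paper: the paper also bounds $\|\tilde s_k\|$ via the level boundedness of the strongly convex objective in \eqref{opt4skine-w-s} together with the feasibility of $\tilde v_k - \tilde\nabla c_k\tilde w_k$, and then reproduces the identity and chain of estimates from \eqref{bound:g1} with the stochastic constants $L_f+\sigma_{\rm max}$, $L_c+\sigma_{\rm max}$, $C+\sigma_{\rm max}$, $\tilde\nu$. Your explicit triangle-inequality derivation of $\tilde\kappa_s$ and your observation that $\|\tilde v_k\|\le\min\{1,\|\tilde c_k\|\}\le\|\tilde c_k\|$ (so no case split is actually needed) merely spell out details the paper leaves implicit.
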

\begin{proof}
    Under Assumptions \ref{ass:bound}, \ref{ass:basic} and \ref{ass:estimates}, it is easy to obtain the existence of $\tilde\kappa_s$ from the level boundedness of the objective function in \eqref{opt4skine-w-s} as well as Lemma \ref{up-w-s}.  Similar to  \eqref{bound:g1}, we can derive 
   \be\begin{aligned}\label{bound:g2}
    \tilde \nabla f_k^\top \tilde s_k + \frac{1}{2}\|\tilde s_k\|^2 
    &\leq \tilde \nabla f_k^\top \tilde v_k + \frac{1}{2}\|\tilde v_k\|^2 - \langle \tilde s_k + \tilde \nabla f_k , \tilde \nabla c_k \tilde w_k \rangle - \frac{1}{2}\|\tilde \nabla c_k \tilde w_k\|^2\\
    &\leq (L_f+\tilde \sigma_k^f) \|\tilde c_k\| + \frac{(C+\tilde \sigma_k^v)}{2} \|\tilde c_k\| + 2(L_c+\tilde \sigma_k^c)\tilde \nu^{-2} (\tilde \kappa_s + L_f + \tilde \sigma_k^f) \|\tilde c_k\|\\
    & \leq \tilde \kappa_c \|\tilde c_k\|,
\end{aligned}\ee
where we use the optimality of $\tilde s_k$ and feasibility of $\tilde v_k - \tilde \nabla c_k \tilde w_k$ to the problem in  \eqref{opt4skine-w-s}.
\end{proof}

Similar to the analysis in \eqref{eq:rhomax}, we can conclude  from Lemma \ref{tilde_kapp} that $\tilde{\rho}_k$ and $\tilde{\eta}_k$ remain uniformly bounded, that is, 
$\tilde{\rho}_k \in [\tilde \rho_0, \tilde{\rho}_{\rm max}]$ and $\tilde{\eta}_k \in [\tilde{\eta}_{\rm min}, 1]$ with
\begin{align}\label{tilderhomax-etamin}
    \tilde{\rho}_{\rm max} = \max \left\{\frac{\tilde \kappa_c}{\tilde \vartheta},\tilde \rho_0\right\} ~{\rm and}~\tilde \eta_{\rm min} = \min \left\{\frac{\tau}{L_g^f + \tilde \rho_{\rm max} (L_g^c+1)}, 1\right\}.
\end{align}

The lemma below  provides the convergence {property} for $\{\tilde s_k\}$ generated by  Algorithm \ref{alg2}.

\begin{lemma}\label{thm:sto-convergence}
    Under the same conditions as in Lemma \ref{tilde_kapp},  
    it holds that 
    \begin{align*}
    \sum_{k=0}^{K-1} \left(\frac{1}{4}-\frac{\tau}{2}\right) \|\tilde s_k\|^2 
    \leq  \frac{f_0-f_{\rm low}+\tilde \rho_{\rm max} C}{\tilde \eta_{\rm min} } + \frac{1}{\tilde \eta_{\min}} \sum_{k=0}^{K-1} \left((\tilde \sigma_k^f)^2 + \frac{\tilde \rho_{{\max}}(\tilde \sigma_k^c)^2}{2} + {3\tilde \rho_{{\max}} \tilde \sigma_k^v}\right),
    \end{align*}
    where \(\tilde \rho_{\rm max}\) and \(\tilde \eta_{\rm min}\) are defined in \eqref{tilderhomax-etamin}.
\end{lemma}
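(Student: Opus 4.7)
The plan is to mimic the deterministic descent analysis of Lemma~\ref{thm:det-convergence-2} but track perturbation errors carefully at every step, then telescope. First I would start from the smoothness of $f$ and $c$ to write
\[
\phi_{\tilde\rho_k}(x_{k+1}) - \phi_{\tilde\rho_k}(x_k) \le \tilde\eta_k \nabla f_k^\top \tilde s_k + \tilde\rho_k\bigl(\|c_k + \tilde\eta_k \nabla c_k^\top \tilde s_k\| - \|c_k\|\bigr) + \tfrac{(L_g^f + \tilde\rho_k L_g^c)\tilde\eta_k^2}{2}\|\tilde s_k\|^2,
\]
and then substitute $\nabla f_k = \tilde\nabla f_k + (\nabla f_k - \tilde\nabla f_k)$, $\nabla c_k = \tilde\nabla c_k + (\nabla c_k - \tilde\nabla c_k)$, and $c_k = \tilde c_k + (c_k - \tilde c_k)$, using Cauchy-Schwarz and Assumption~\ref{ass:estimates}(i) to bound each perturbation term by $\tilde\sigma_k^f\|\tilde s_k\|$, $\tilde\sigma_k^c\|\tilde s_k\|$, or $\tilde\sigma_k^v$.

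Next I would apply Lemma~\ref{up-w-s}: by convex combination, $\|\tilde c_k + \tilde\eta_k\tilde\nabla c_k^\top \tilde s_k\| \le (1-\tilde\eta_k\tilde\vartheta)\|\tilde c_k\|$ for any $\tilde\eta_k\in[0,1]$. Combined with triangle inequalities, this yields
\[
\|c_k + \tilde\eta_k\nabla c_k^\top \tilde s_k\| - \|c_k\| \le -\tilde\eta_k\tilde\vartheta\|\tilde c_k\| + 3\tilde\sigma_k^v + \tilde\eta_k\tilde\sigma_k^c\|\tilde s_k\|,
\]
where the factor $3$ (rather than $2$) arises because I twice need to convert between $\|c_k\|$ and $\|\tilde c_k\|$: once to pass from $(1-\tilde\eta_k\tilde\vartheta)\|\tilde c_k\|$ to something comparable with $\|c_k\|$, and again to recast the resulting $-\tilde\rho_k\tilde\eta_k\tilde\vartheta\|c_k\|$ into $-\tilde\rho_k\tilde\eta_k\tilde\vartheta\|\tilde c_k\|$, which is what the $\tilde\rho_k$-update formula in~\eqref{eq:sto_rhok_etak} can cancel.

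The cancellation step then writes $\tilde\eta_k\tilde\nabla f_k^\top \tilde s_k \le \tilde\eta_k\tilde\rho_k\tilde\vartheta\|\tilde c_k\| - \tfrac{\tilde\eta_k}{2}\|\tilde s_k\|^2$ using the definition of $\tilde\rho_k$, after which the $\tilde\rho_k\tilde\eta_k\tilde\vartheta\|\tilde c_k\|$ pieces annihilate and leave a clean $-\tfrac{\tilde\eta_k}{2}\|\tilde s_k\|^2$. The remaining cross terms are handled by Young's inequality with the specific splits
\[
\tilde\eta_k\tilde\sigma_k^f\|\tilde s_k\| \le (\tilde\sigma_k^f)^2 + \tfrac{\tilde\eta_k^2}{4}\|\tilde s_k\|^2, \qquad
\tilde\rho_k\tilde\eta_k\tilde\sigma_k^c\|\tilde s_k\| \le \tfrac{\tilde\rho_k(\tilde\sigma_k^c)^2}{2} + \tfrac{\tilde\rho_k\tilde\eta_k^2}{2}\|\tilde s_k\|^2,
\]
chosen so that no $\tilde\eta_k$ factor appears on the pure-noise squares. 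Using $\tilde\eta_k\le 1$ and the key identity $\tilde\eta_k(L_g^f+\tilde\rho_k(L_g^c+1))\le \tau$ from the definition of $\tilde\eta_k$ (this is precisely why the ``$+1$'' was included), I can absorb $\tfrac{\tilde\rho_k\tilde\eta_k^2}{2}\|\tilde s_k\|^2$ together with the smoothness term into $\tfrac{\tilde\eta_k\tau}{2}\|\tilde s_k\|^2$, and bound $\tfrac{\tilde\eta_k^2}{4}\|\tilde s_k\|^2$ by $\tfrac{\tilde\eta_k}{4}\|\tilde s_k\|^2$. Collecting all $\|\tilde s_k\|^2$ coefficients gives $-\tilde\eta_k(\tfrac{1}{4}-\tfrac{\tau}{2})\|\tilde s_k\|^2$, which requires $\tau<1/2$ as assumed.

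Finally I sum from $k=0$ to $K-1$. The telescoping $\sum_k[\phi_{\tilde\rho_k}(x_k)-\phi_{\tilde\rho_k}(x_{k+1})]$ is handled by summation-by-parts with the monotonicity $\tilde\rho_k\ge\tilde\rho_{k-1}$ and the bound $\|c_k\|\le C$, yielding the upper bound $f_0-f_{\rm low}+\tilde\rho_{\max}C$. Then using $\tilde\eta_k\ge\tilde\eta_{\min}$ on the LHS and $\tilde\rho_k\le\tilde\rho_{\max}$ on the noise terms on the RHS, dividing by $\tilde\eta_{\min}$ produces the desired inequality. The main delicate point is the bookkeeping in Step 2: in particular, generating the correct coefficient $3$ on $\tilde\rho_{\max}\tilde\sigma_k^v$ requires noting that $\|\tilde c_k\|$ and $\|c_k\|$ must be swapped twice, whereas the rest of the proof is a direct stochastic analogue of~\eqref{descent-ine}.
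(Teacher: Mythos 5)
Your proposal is correct and follows essentially the same route as the paper's proof: smoothness plus substitution of the stochastic estimates, the convex-combination use of Lemma \ref{up-w-s}, the same Young splits (including the role of the ``$+1$'' in the stepsize denominator), cancellation via the $\tilde\rho_k$-update, and telescoping with $\tilde\rho_k$ monotone and $\|c_k\|\le C$. The only difference is cosmetic bookkeeping of where the $\|\tilde c_k\|\leftrightarrow\|c_k\|$ swaps occur (the paper collects $2\tilde\sigma_k^v$ from the constraint part and one more from the $f$-part), and both routes land on the same coefficient $3\tilde\rho_{\max}\tilde\sigma_k^v$.
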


\begin{proof}
    From the smoothness of \(f\), we obtain 
    \begin{align*}
        f_{k+1} - f_k &\leq \tilde \eta_k \nabla f_k^\top \tilde s_k + \frac{\tilde \eta_k^2 L_g^f}{2}\|\tilde s_k\|^2\\
        &= \tilde \eta_k (\nabla f_k - \tilde \nabla f_k)^\top \tilde s_k + \tilde \eta_k \tilde \nabla f_k^\top \tilde s_k + \frac{\tilde \eta_k}{2}\|\tilde s_k\|^2 - \tilde \eta_k \left(\frac{1}{2}-\frac{\tilde \eta_k L_g^f}{2}\right) \|\tilde s_k\|^2 \\ 
        &\leq \tilde \eta_k(\tilde \sigma_k^f)^2 + \frac{\tilde \eta_k}{4}\|\tilde s_k\|^2 + \tilde \eta_k \tilde \rho_k \tilde \vartheta \|\tilde c_k\| - \tilde \eta_k \left(\frac{1}{2}-\frac{\tilde \eta_k L_g^f}{2}\right) \|\tilde s_k\|^2 \\
        &\leq - \tilde \eta_k \left(\frac{1}{4}-\frac{\tilde \eta_k L_g^f}{2}\right) \|\tilde s_k\|^2 + \tilde \eta_k \tilde \rho_k \tilde \vartheta \|c_k\|+ \tilde \eta_k(\tilde \sigma_k^f)^2 + \tilde \eta_k \tilde \rho_k \tilde \vartheta \tilde \sigma_k^v,
    \end{align*}
    where the second inequality uses the setting of \(\tilde \rho_k\) and the last inequality comes from Assumption \ref{ass:estimates}. On the other hand, it follows from the smoothness of $c$ that 
    \begin{align*}
        \|c_{k+1}\| &\leq \|c_k + \tilde \eta_k \nabla c_k^\top \tilde s_k\| + \frac{\tilde \eta_k^2L_g^c}{2}\|\tilde s_k\|^2\\
         &\leq \|\tilde c_k + \tilde \eta_k \tilde \nabla c_k^\top \tilde s_k\| + \|c_k - \tilde c_k\| + \tilde \eta_k \|\tilde \nabla c_k - \nabla c_k\|\|\tilde s_k\| + \frac{\tilde \eta_k^2L_g^c}{2}\|\tilde s_k\|^2\\
         &\leq \|\tilde c_k + \tilde \eta_k \tilde \nabla c_k^\top \tilde s_k\| + \tilde \sigma_k^v + \frac{(\tilde \sigma_k^c)^2}{2} + \frac{\tilde \eta_k^2(1+L_g^c)}{2}\|\tilde s_k\|^2\\
         &= (1 - \tilde \eta_k \tilde \vartheta) \|\tilde c_k\| + \tilde \sigma_k^v + \frac{(\tilde \sigma_k^c)^2}{2} + \frac{\tilde \eta_k^2(1+L_g^c)}{2}\|\tilde s_k\|^2\\
         &\leq (1 - \tilde \eta_k \tilde \vartheta) \|c_k\| + 2\tilde \sigma_k^v + \frac{(\tilde \sigma_k^c)^2}{2} + \frac{\tilde \eta_k^2(1+L_g^c)}{2}\|\tilde s_k\|^2,
    \end{align*}
    where the {equality} uses \eqref{cor:constraint_descent}. Therefore, we obtain from the setting of \(\tilde \eta_k\) that
    \begin{align*}
    f_{k+1} + \tilde \rho_k \|c_{k+1}\| &\leq f_k + \tilde \rho_k \|c_k\| - \tilde \eta_k \left(\frac{1}{4}-\frac{\tilde \eta_k (\tilde \rho_k+\tilde \rho_k L_g^c+L_g^f)}{2}\right)  \|\tilde s_k\|^2 \\
    &\quad~+ \tilde \eta_k(\tilde \sigma_k^f)^2 + \frac{\tilde \rho_k(\tilde \sigma_k^c)^2}{2} + 3\tilde \rho_k \tilde \sigma_k^v\\
    &\leq f_k + \tilde \rho_k \|c_k\| - \tilde \eta_k \left(\frac{1}{4}-\frac{\tau}{2}\right)  \|\tilde s_k\|^2 + \tilde \eta_k(\tilde \sigma_k^f)^2 + \frac{\tilde \rho_k(\tilde \sigma_k^c)^2}{2} + 3\tilde \rho_k \tilde \sigma_k^v.
    \end{align*}
    Summing  it from \(k=0\) to \(K-1\) and then rearranging the terms yields
    \be\label{eq:ave-tildes}\begin{aligned}
     &\sum_{k=0}^{K-1} \left(\frac{1}{4}-\frac{\tau}{2}\right)\tilde \eta_k  \|\tilde s_k\|^2 \\
     &\leq   \sum_{k=0}^{K-1} \left(\left(f_k + \tilde \rho_k \|c_k\| - f_{k+1} - \tilde \rho_k\|c_{k+1}\|\right) + \tilde \eta_k(\tilde\sigma_k^f)^2 + \frac{\tilde \rho_k (\tilde \sigma_k^c)^2}{2} + 3\tilde \rho_k \tilde \sigma_k^v\right)\\
    & \le  {f_0-f_{\rm low}+\tilde \rho_{\rm max} C}  +   \sum_{k=0}^{K-1}  \left((\tilde\sigma_k^f)^2 + \frac{ \tilde \rho_{\max}(\tilde \sigma_k^c)^2}{2} + 3\tilde \rho_{\max} \tilde \sigma_k^v\right).
    \end{aligned}\ee
    Then by using $\tilde\eta_k\ge \tilde\eta_{\min}$ we further obtain the conclusion. 
\end{proof}

Although Lemma~\ref{thm:sto-convergence} establishes the convergence properties of the approximate {search direction} $\tilde{s}_k$. However, the convergence of the exact {search direction} $s_k$ defined by \eqref{opt4skine-w} is of greater interest, as it more directly reflects the {KKT residual}, see \eqref{kkt-sta}-\eqref{kkt-com}. Next, we present a lemma to bridge the relationship between $\tilde{s}_k$ and $s_k$, for subsequent analysis of the convergence of KKT residual. The proof of Lemma \ref{lm:sod} relies on the perturbation analysis for constrained optimization. About the perturbation theory we provide more details in Appendix \ref{sec:appendix-perturb}.

\begin{lemma} \label{lm:sod}
    Suppose that Assumptions \ref{ass:bound}, \ref{ass:basic} and \ref{ass:estimates} hold. Then there exists a positive constant \(\kappa_p\) such that
    \begin{align}\label{error-s}
    \|\tilde s_k - s_k \| &\leq \kappa_p (\tilde \sigma_k^f + \tilde \sigma_k^c + \tilde \sigma_k^v)\quad \mbox{for all }k\ge0.
    \end{align}
\end{lemma}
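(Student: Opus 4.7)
The plan is to view both $s_k$ and $\tilde s_k$ as optimal solutions to strongly convex quadratic programs of identical structural form but with perturbed data, and to invoke Lipschitz stability of the solution map under data perturbations. The two problems \eqref{opt4skine-w} and \eqref{opt4skine-w-s} share the identity Hessian in the quadratic term, the bound constraint $x_k + s \ge 0$, and a linear equality-constraint template $\nabla c^\top s = -\nabla c^\top \nabla c\, w$; they differ only in the linear data $(\nabla f_k, \nabla c_k, w_k)$ versus $(\tilde\nabla f_k, \tilde\nabla c_k, \tilde w_k)$. A standard perturbation result for parametric strongly convex QPs with linearly perturbed feasible sets (as in Appendix \ref{sec:appendix-perturb}) will then yield
\begin{equation*}
\|\tilde s_k - s_k\| \le C_1\bigl(\tilde\sigma_k^f + \|\nabla c_k - \tilde\nabla c_k\| + \|\nabla c_k w_k - \tilde\nabla c_k \tilde w_k\| + \|\nabla c_k^\top \nabla c_k w_k - \tilde\nabla c_k^\top \tilde\nabla c_k \tilde w_k\|\bigr),
\end{equation*}
where $C_1$ depends only on $\nu, \tilde\nu, L_c$ and other uniform constants from Assumptions \ref{ass:bound}, \ref{ass:basic} and \ref{ass:estimates}.

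The next step is to reduce this bound to one involving only $\tilde\sigma_k^f, \tilde\sigma_k^c, \tilde\sigma_k^v$. Expanding the composite terms by adding and subtracting $\nabla c_k \tilde w_k$ and $\nabla c_k^\top \nabla c_k \tilde w_k$ and using Assumption \ref{ass:bound}, together with the uniform bounds $\|\tilde w_k\| \le \tilde\nu^{-2}\tilde\vartheta\|\tilde c_k\|$ from Lemma \ref{up-w-s} and $\|w_k\| \le \nu^{-2}(2-\vartheta)\|c_k\|$ from Lemma \ref{lm:constraint_descent}, reduces the task to bounding $\|w_k - \tilde w_k\|$ in terms of $\tilde\sigma_k^c$ and $\tilde\sigma_k^v$. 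I plan to do this by introducing the auxiliary deterministic quantity $\bar w_k := \vartheta (\nabla c_k^\top \nabla c_k)^{-1} c_k$, so that $\|\bar w_k - \tilde w_k\|$ becomes a pure matrix-and-right-hand-side perturbation of a linear system whose coefficient is uniformly positive definite, and separately controlling $\|w_k - \bar w_k\|$ through the KKT characterization of the constrained problem \eqref{opt4wu}.

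The main obstacle will be the structural mismatch between the constrained problem \eqref{opt4wu} defining $w_k$ and the unconstrained closed form \eqref{eq:wk-explicit} defining $\tilde w_k$, which prevents a one-shot application of a single perturbation theorem. Overcoming this requires exploiting the strong MFCQ (Assumption \ref{ass:mfcq}) together with Lemma \ref{lm:ass} to quantify the interior slack of the $v$-component of \eqref{opt4wu}, and then reading off bounded Lagrange multipliers in the associated KKT system, much in the spirit of Lemma \ref{bnd_lam_mu}. Once $\|w_k - \tilde w_k\| \le C_2(\tilde\sigma_k^c + \tilde\sigma_k^v)$ is in hand, combining the two stability estimates yields the advertised bound \eqref{error-s} with a single constant $\kappa_p$ uniform in $k$.
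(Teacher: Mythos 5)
Your overall strategy --- casting $s_k$ and $\tilde s_k$ as minimizers of the same strongly convex QP template with perturbed linear data and invoking the Lipschitz stability result of Appendix \ref{sec:appendix-perturb} (Lemma \ref{lm:perturbation}), with Gollan's condition supplied by the strong MFCQ --- is exactly the route the paper takes, and that part of your plan is sound. The gap is in how you propose to handle the $w$-discrepancy. You reduce the problem to bounding $\|w_k-\tilde w_k\|$ with $w_k$ taken from the \emph{constrained} subproblem \eqref{opt4wu}, insert the auxiliary point $\bar w_k=\vartheta(\nabla c_k^\top\nabla c_k)^{-1}c_k$, and claim that $\|w_k-\bar w_k\|$ can be controlled "through the KKT characterization of \eqref{opt4wu}" so as to end up bounded by $C_2(\tilde\sigma_k^c+\tilde\sigma_k^v)$. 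This step cannot work: both $w_k$ and $\bar w_k$ are purely deterministic objects that do not depend on the stochastic estimates at all, so their difference cannot be bounded by the sampling errors unless it is identically zero. It is not zero in general --- $\bar w_k$ is merely a feasible point of \eqref{opt4wu} used in the proof of Lemma \ref{lm:constraint_descent}, while $w_k$ is the constrained minimizer, and the inequality and norm constraints in \eqref{opt4wu} generically move the minimizer away from $\bar w_k$ by an amount of order $\|c_k\|$. Consequently your chain of estimates would leave an $O(\|c_k\|)$ residual that does not vanish with $\tilde\sigma_k^f,\tilde\sigma_k^c,\tilde\sigma_k^v$.

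The paper avoids this by (implicitly) taking the reference direction $s_k$ in \eqref{error-s} to be the one whose equality constraint has right-hand side $-\tilde\vartheta c_k$, i.e.\ the exact-data analogue of the closed form \eqref{eq:wk-explicit}, rather than the direction built from the constrained $w_k$ of \eqref{opt4wu}: in its parameterization \eqref{opt4skine-w-s2} the unperturbed constraint reads $\nabla c_k^\top s=-\tilde\vartheta c_k$. With that convention the only $w$-term to control is $\tilde\vartheta\|\tilde\nabla c_k(\tilde\nabla c_k^\top\tilde\nabla c_k)^{-1}\tilde c_k-\nabla c_k(\nabla c_k^\top\nabla c_k)^{-1}c_k\|$, which is a genuine matrix-plus-right-hand-side perturbation handled by the Sherman--Morrison--Woodbury identity and Assumption \ref{ass:estimates}; no comparison with the constrained subproblem \eqref{opt4wu} is needed. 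To repair your argument you should either adopt the same reading of $s_k$ (dropping the $\|w_k-\bar w_k\|$ term entirely), or else explicitly redefine the deterministic reference direction --- but as written, the bridging step between \eqref{opt4wu} and \eqref{eq:wk-explicit} is a genuine obstruction, not a technicality.
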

\begin{proof}
    We will employ Lemma \ref{lm:perturbation} to establish the upper bound of \(\|s_k - \tilde{s}_k\|\). 
    Let \(\Delta p = (\Delta p_1^\top,\Delta p_2^\top,\Delta p_3^\top)^\top \) be the perturbation vector with \(\Delta p_1 = \tilde{\nabla} f_k + \tilde{\nabla} c_k \tilde w_k - \nabla f_k - \nabla c_k w_k\), \(\Delta p_2 = \tilde{\nabla} c_k - \nabla c_k\), and \(\Delta p_3 = \tilde{c}_k - c_k\). Then  \eqref{opt4skine-w-s} can be equivalently expressed as 
    \be\begin{aligned}\label{opt4skine-w-s2}
    \tilde s(\Delta p) = \argmin_{s\in\R^d}& \quad  \frac{1}{2}\|s+ \nabla f_k + \nabla c_k w_k + \Delta p_1\|^2 \\
    ~{\rm s.t.}&\quad (\nabla c_k + \Delta p_2)^\top s = -\tilde \vartheta (c_k + \Delta p_3),\\
    &\quad  x_k + s \geq 0.
    \end{aligned}\ee
    We will  verify the conditions (i)--(v) in Lemma \ref{lm:perturbation}. For condition (i), note that the objective function of  problem in \eqref{opt4skine-w-s} is strongly convex, the constraints are linear, and a feasible point exists, thus  condition (i) holds. For condition (ii), based on the strong MFCQ, i.e., Assumption \ref{ass:estimates}, and Lemma \ref{lm:gollan}, we can verify condition (ii). For condition (iii), the existence of Lagrange multipliers follows similarly from the strong MFCQ. For condition (iv), the strong convexity of the objective and the linearity of the constraints allow us to confirm that the second-order sufficient conditions are satisfied. For condition (v), the boundedness of the solution set to the perturbed problem was addressed in Lemma \ref{tilde_kapp}.
    
    We next establish the bound on \(\|\Delta p\|\), by bounding the three components: \(\Delta p_1, \Delta p_2\), and \(\Delta p_3\) individually. 
    For \(\Delta p_1\), it holds from $\tilde w_k=\tilde \vartheta(\tilde \nabla c_k^\top \tilde \nabla c_k)^{-1} \tilde c_k$ that
    \be\begin{aligned}\label{eq:p1}
        &\|\Delta p_1\| = \|\tilde{\nabla} f_k + \tilde{\nabla} c_k \tilde w_k - \nabla f_k - \nabla c_k w_k\|\\
        &\leq \|\tilde \nabla f_k - \nabla f_k\| + \tilde \vartheta\|\tilde \nabla c_k (\tilde \nabla c_k^\top \tilde \nabla c_k)^{-1} \tilde c_k -  \nabla c_k (\nabla c_k^\top \nabla c_k)^{-1} c_k\|\\
        &\leq \|\tilde \nabla f_k - \nabla f_k\| + \tilde \vartheta\|\tilde \nabla c_k - \nabla c_k\| \| (\tilde \nabla c_k^\top \tilde \nabla c_k)^{-1} \tilde c_k \| \\
        &\quad~+ \tilde \vartheta\|\nabla c_k\| \| (\tilde \nabla c_k^\top \tilde \nabla c_k)^{-1} - (\nabla c_k^\top \nabla c_k)^{-1}\|  \| \tilde c_k \| + \tilde \vartheta\|\nabla c_k (\nabla c_k^\top \nabla c_k)^{-1}\| \|\tilde c_k - c_k\|.
    \end{aligned}\ee
    The bounds of \(\|\tilde \nabla f_k - \nabla f_k\|\), \(\|\tilde \nabla c_k - \nabla c_k\|\) and \(\|\tilde c_k - c_k\|\) are assumed in Assumption \ref{ass:estimates}. Then the remainder is to prove the bound of \(\| (\tilde \nabla c_k^\top \tilde \nabla c_k)^{-1} - (\nabla c_k^\top \nabla c_k)^{-1} \|\). From the definition of \((\tilde \nabla c_k^\top \tilde \nabla c_k)^{-1}\) and \((\nabla c_k^\top \nabla c_k)^{-1}\) and Sherman–Morrison–Woodbury formula (see Lemma \ref{lm:smw}), we have
    \be \label{diff-d}
    \begin{aligned}
        &\| (\tilde \nabla c_k^\top \tilde \nabla c_k)^{-1} - (\nabla c_k^\top \nabla c_k)^{-1} \|\\ 
        &=\left\|(\tilde \nabla c_k^{\top} \tilde \nabla c_k)^{-1}\left[\nabla c_k^{\top} \nabla c_k - \tilde \nabla c_k^{\top} \tilde \nabla c_k\right](\nabla c_k^{\top} \nabla c_k)^{-1}\right\|\\
        &\leq \left\|(\tilde \nabla c_k^{\top} \tilde \nabla c_k)^{-1}\right\| \left\|\nabla c_k^{\top} \nabla c_k - \tilde \nabla c_k^{\top} \tilde \nabla c_k\right\|\left\|(\nabla c_k^{\top} \nabla c_k)^{-1}\right\|\\
        &\leq \frac{1}{\tilde \nu^2 \nu^2} \left(\|\nabla c_k\|+\|\tilde \nabla c_k\|\right)\|\tilde \nabla c_k - \nabla c_k\|.
    \end{aligned}
    \ee
    Substituting \eqref{diff-d} into \eqref{eq:p1} implies
    \be\begin{aligned}\label{eq:proj2}
        \|\Delta p_1\| \leq \tilde \sigma_k^f + \tilde \nu^{-2}\tilde \vartheta(C+\tilde \sigma_k^v)\tilde \sigma_k^c + \tilde \nu^{-2} \nu^{-2}\tilde \vartheta L_c (2L_c +\tilde \sigma_k^c)(C+\tilde \sigma_k^v)\tilde \sigma_k^c + \nu^{-2} \tilde \vartheta L_c \tilde \sigma_k^v.
    \end{aligned}\ee
    For \(\Delta p_2\) and \(\Delta p_3\), it follows from Assumption \ref{ass:estimates} that \(\|\Delta p_2\| \leq \tilde\sigma_k^c\) and \(\|\Delta p_3\| \leq \tilde\sigma_k^v\). Combining the above bounds, one has
    \[
    \|\Delta p\| \leq \tilde \sigma_k^f + (1 + \tilde \nu^{-2}\tilde \vartheta(1+\nu^{-2}L_c(2L_c+\sigma_k^c))(C+\tilde \sigma_k^v)) \tilde \sigma_k^c + (1 + \nu^{-2}\tilde \vartheta L_c) \tilde \sigma_k^v.
    \]
    Consequently, by Lemma \ref{lm:perturbation}  there exists a constant \(\kappa_p\) such that \eqref{error-s} holds.
\end{proof}

Lemma~\ref{lm:sod} establishes a bound on the error between the approximate step $\tilde s_k$, computed using estimated gradients and constraint function values, and the exact step $s_k$, derived from exact quantities.
This lemma provides a critical foundation for analyzing the convergence of our proposed algorithm. By bounding the step error, we ensure that the iterates remain sufficiently close to the ideal trajectory, thereby supporting the proof of convergence under appropriate stepsize conditions. 
With the help of Lemma~\ref{thm:sto-convergence} and Lemma~\ref{lm:sod}, we arrive at the following theorems showing the convergence behavior of iterates generated by Algorithm \ref{alg2}. 

\begin{theorem}[{\bf Global convergence of Algorithm \ref{alg2}}]\label{cor:sto-convergence}
    Suppose that Assumptions \ref{ass:bound}, \ref{ass:basic} and \ref{ass:estimates} hold. Then there exist \(\lambda_k \in \R^m\) and \(\mu_k \in \R^d_{\ge 0}\) with \(\|\mu_k\| \leq \kappa_\mu\) for any \(k \geq 0\) such that
    \begin{align*}
    &\limsup_{K \to \infty} \left(\frac{1}{K} \sum_{k=0}^{K-1} \|\nabla f_k + \nabla c_k \lambda_k - \mu_k\|^2 + \|c_k\|^2 + |\mu_k^\top x_k|^2 \right)\\
    &\leq \left(\left(1+L_c^2\nu^{-2}\right)^2+ L_c^2\vartheta^{-2}+\kappa_\mu^2\right) (\kappa_3 \sigma_{\rm max}^2 + \kappa_4 \sigma_{\rm max}),
    \end{align*}
    where $\kappa_\mu$ is introduced in Lemma \ref{bnd_lam_mu}, \(\kappa_3 = 18\kappa_p^2 + 8\left(1-2\tau\right)^{-1}(\frac{1}{\tilde\eta_{\min}}+\frac{\tilde \rho_{\rm max}}{2\tilde \eta_{\rm min}})\) and \(\kappa_4 = 24\left(1-2\tau\right)^{-1}\frac{\tilde \rho_{\rm max}}{\tilde \eta_{\rm min}}\). Furthermore, if 
    \begin{align}\label{cond:error_var}
    \sum_{k=0}^{+\infty} (\tilde \sigma_k^f)^2 < +\infty,~\sum_{k=0}^{+\infty} (\tilde \sigma_k^c)^2 < +\infty,~\sum_{k=0}^{+\infty} \tilde \sigma_k^v < +\infty,
    \end{align}
     it holds that
    \be\label{conv-K}
    \lim_{k \to \infty} \left(\|\nabla f_k + \nabla c_k \lambda_k - \mu_k\|^2 + \|c_k\|^2 + |\mu_k^\top x_k|^2 \right) = 0.
    \ee
\end{theorem}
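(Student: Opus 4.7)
The strategy is to reduce each of the three KKT residuals at $x_k$ to a multiple of $\|s_k\|$, where $s_k$ denotes the \emph{ideal} search direction from the deterministic subproblem~\eqref{opt4skine-w} evaluated at the stochastic iterate $x_k$, and then bound $\|s_k\|$ by the computable $\|\tilde s_k\|$ plus the perturbation term controlled by Lemma~\ref{lm:sod}. To begin, I would let $w_k,\lambda_k,\mu_k$ be the ideal primal--dual triple at $x_k$ satisfying~\eqref{multiplier}. Since $\tilde\eta_k\in(0,1]$ and~\eqref{opt4skine-w-s} enforces $x_k+\tilde s_k\ge 0$, the iterate $x_k$ remains nonnegative throughout the run, so the ideal subproblem is well-defined at every iterate. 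Repeating the manipulations that led to~\eqref{kkt-sta}--\eqref{kkt-com} and invoking Lemma~\ref{bnd_lam_mu} (which tacitly requires Assumption~\ref{ass:mfcq} on the deterministic problem) then yields $\|\mu_k\|\le \kappa_\mu$ together with
\[
\|\nabla f_k+\nabla c_k\lambda_k-\mu_k\|^2+\|c_k\|^2+|\mu_k^\top x_k|^2
\le\bigl((1+L_c^2\nu^{-2})^2+L_c^2\vartheta^{-2}+\kappa_\mu^2\bigr)\|s_k\|^2.
\]

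Next I would write $\|s_k\|^2\le 2\|\tilde s_k\|^2+2\|s_k-\tilde s_k\|^2$. Lemma~\ref{lm:sod} combined with $(\tilde\sigma_k^f+\tilde\sigma_k^c+\tilde\sigma_k^v)^2\le 3\bigl((\tilde\sigma_k^f)^2+(\tilde\sigma_k^c)^2+(\tilde\sigma_k^v)^2\bigr)\le 9\sigma_{\rm max}^2$ immediately delivers $\|s_k-\tilde s_k\|^2\le 9\kappa_p^2\sigma_{\rm max}^2$ uniformly in $k$. For the other term, Lemma~\ref{thm:sto-convergence} used with $\bigl(\tfrac14-\tfrac\tau2\bigr)^{-1}=\tfrac{4}{1-2\tau}$, $(\tilde\sigma_k^f)^2,(\tilde\sigma_k^c)^2<\sigma_{\rm max}^2$ and $\tilde\sigma_k^v<\sigma_{\rm max}$ produces
\[
\frac{1}{K}\sum_{k=0}^{K-1}\|\tilde s_k\|^2
\le\frac{4(f_0-f_{\rm low}+\tilde\rho_{\rm max}C)}{(1-2\tau)\tilde\eta_{\rm min}K}
+\frac{4}{(1-2\tau)\tilde\eta_{\rm min}}\left(\Bigl(1+\tfrac{\tilde\rho_{\rm max}}{2}\Bigr)\sigma_{\rm max}^2+3\tilde\rho_{\rm max}\sigma_{\rm max}\right).
\]
Passing $K\to\infty$ kills the first term on the right, and regrouping the constants in the sum of the two bounds reproduces exactly $\kappa_3\sigma_{\rm max}^2+\kappa_4\sigma_{\rm max}$ as defined in the statement.

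For the asymptotic assertion I would invoke~\eqref{cond:error_var} to conclude $\sum\|s_k-\tilde s_k\|^2<\infty$ via Lemma~\ref{lm:sod} (using $\sum(\tilde\sigma_k^v)^2\le\sigma_{\rm max}\sum\tilde\sigma_k^v<\infty$), and $\sum\|\tilde s_k\|^2<\infty$ from the unaveraged form of Lemma~\ref{thm:sto-convergence}, whose right-hand side is finite under~\eqref{cond:error_var}. Together with $\|s_k\|^2\le 2\|\tilde s_k\|^2+2\|s_k-\tilde s_k\|^2$ this yields $\sum\|s_k\|^2<\infty$; in particular $\|s_k\|\to 0$, and~\eqref{conv-K} follows from the first-paragraph inequality.

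The main obstacle, and the conceptual novelty, is the decoupling between the stochastic trajectory and the ideal stationarity measures: $\lambda_k,\mu_k$ are not quantities the algorithm computes but rather auxiliary multipliers attached to the stochastic iterate $x_k$ through the \emph{deterministic} subproblem. Their existence, uniform boundedness, and sharp tie to $\|s_k\|$ rely on Lemma~\ref{bnd_lam_mu} and the MFCQ-based inequalities of Section~\ref{sec:full-det-method}; once this bookkeeping is in place and the perturbation bound of Lemma~\ref{lm:sod} is available, the remainder is arithmetic on constants.
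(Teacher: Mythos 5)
Your proposal is correct and follows essentially the same route as the paper's proof: it bounds the KKT residuals by a constant multiple of $\|s_k\|^2$ via \eqref{multiplier} and \eqref{kkt-sta}--\eqref{kkt-com}, splits $\|s_k\|^2\le 2\|\tilde s_k\|^2+2\|s_k-\tilde s_k\|^2$, and then applies Lemma~\ref{lm:sod} and Lemma~\ref{thm:sto-convergence} to arrive at exactly $\kappa_3\sigma_{\rm max}^2+\kappa_4\sigma_{\rm max}$, with the asymptotic claim handled by the summability of the error sequences. Your explicit remark that Lemma~\ref{bnd_lam_mu} (and hence Assumption~\ref{ass:mfcq}) is tacitly needed is a fair observation about the theorem's stated hypotheses, but it does not change the argument, which matches the paper's.
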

\begin{proof}
    From Jensen's inequality and Lemma \ref{lm:sod}, we have
    \be\begin{aligned}\label{eq:bars}
        \|s_k\|^2 &= \|s_k - \tilde s_k + \tilde s_k\|^2 \leq 2 \|\bar s_k - \tilde s_k\|^2 + 2\|\tilde s_k\|^2\\
        &\leq 6\kappa_p^2(\tilde \sigma_k^f)^2 + 6\kappa_p^2(\tilde \sigma_k^c)^2 + 6\kappa_p^2(\tilde \sigma_k^v)^2 + 2\|\tilde s_k\|^2.
    \end{aligned}\ee
    Then together with Lemma \ref{thm:sto-convergence}, taking the limit superior of $\frac{1}{K}\sum_{k=0}^{K-1}\|\tilde{s}_k\|^2$ yields
    \begin{align*}
        & \limsup_{K \to \infty}\frac{1}{K}\sum_{k=0}^{K-1}\|s_k\|^2\\ &\leq 18\kappa_p^2\sigma_{\rm max}^2 + \limsup_{k \to \infty}\frac{2}{K}\sum_{k=0}^{K-1}\|\tilde s_k\|^2\\
        &\leq 18\kappa_p^2\sigma_{\rm max}^2 + \left(\frac{1}{8}-\frac{\tau}{4}\right)^{-1} \left(\left(\frac{1}{\tilde \eta_{\min}}+\frac{\tilde \rho_{\rm max}}{2\tilde \eta_{\rm min}}\right)\sigma_{\rm max}^2 + \frac{3\tilde \rho_{\rm max}}{\tilde \eta_{\rm min}}\sigma_{\rm max}\right)\\
        &= \kappa_3 \sigma_{\rm max}^2 + \kappa_4 \sigma_{\rm max}.
    \end{align*}
    Therefore, it indicates from \eqref{multiplier} and \eqref{kkt-sta}-\eqref{kkt-com} that there exist \(\lambda_k \in \R^m\) and  \(\mu_k \in \R^d_{\ge 0}\) with \(\|\mu_k\| \leq \kappa_\mu\) such that
    \be\label{cc}
    \|\nabla f_k + \nabla c_k \lambda_k - \mu_k\|^2 + \|c_k\|^2 + |\mu_k^\top x_k|^2 \leq \left(\left(1+(L_g^c)^2\nu^{-2}\right)^2+(L_g^c)^2\vartheta^{-2}+\kappa_\mu^2\right)\|s_k\|^2.
    \ee
    Then we have
    \begin{align*}
        &\limsup_{K \to \infty} \left(\frac{1}{K} \sum_{k=0}^{K-1} \|\nabla f_k + \nabla c_k \lambda_k - \mu_k\|^2 + \|c_k\|^2 + |\mu_k^\top x_k|^2 \right)\\
        &\leq \left(\left(1+L_c^2\nu^{-2}\right)^2+ L_c^2\vartheta^{-2}+\kappa_\mu^2\right)\limsup_{K \to \infty}\frac{1}{K}\sum_{k=0}^{K-1}\|s_k\|^2\\
        &\leq \left(\left(1+L_c^2\nu^{-2}\right)^2+ L_c^2\vartheta^{-2}+\kappa_\mu^2\right) (\kappa_3 \sigma_{\rm max}^2 + \kappa_4 \sigma_{\rm max}).
    \end{align*}
    Note that condition \eqref{cond:error_var} implies that the sequences \(\{\tilde \sigma_k^f\}, \{\tilde \sigma_k^c\}, \{\tilde \sigma_k^v\}\) converge to zero and that \(\|\tilde s_k\|^2 \to 0\) by  Lemma \ref{thm:sto-convergence}. We thus derive from \eqref{eq:bars} and \eqref{cc} that \eqref{conv-K} holds. 
    The proof is completed.
\end{proof}

In next two subsections, we will give two specific approaches:  mini-batch approach and recursive momentum approach, that apply the framework of Algorithm \ref{alg2} and  compute stochastic gradients of both objective and constraints as well as the stochastic function values to satisfy Assumption \ref{ass:estimates}. And then we will  analyze the  overall oracle complexity of  these two approaches  to reach an $\epsilon$-KKT point of \eqref{p3}.

\subsection{Mini-batch approach}\label{ssec:mb}

In the mini-batch approach, we apply the framework of Algorithm \ref{alg2} and  estimate  the associated  gradients and function values by taking average based on a randomly generated subset of samples.  More specifically,  we compute the stochastic gradients of the objective and constraints and stochastic constraint function values by 
\begin{align}\label{minibatch}
    \tilde \nabla f_k =  \frac{1}{B_k^f} \sum_{\xi \in \mathcal B_k^f} \nabla F(x_k,\xi), ~
    \tilde \nabla c_k = \frac{1}{B_k^c} \sum_{\zeta\in \mathcal B_k^c} \nabla C(x_k,\zeta),~
    \tilde c_k = \frac{1}{B_k^v} \sum_{\zeta \in \mathcal B_k^v} C(x_k,\zeta) , 
\end{align}
respectively, where $\mathcal B_k^f$, $\mathcal B_k^c$ and $\mathcal B_k^v$ are randomly and independently generated sample  sets with $|\mathcal B_k^f| = B_k^f$, $|\mathcal B_k^c| = B_k^c$ and $|\mathcal B_k^v| = B_k^v$. 

Before proceeding, we impose the following assumption on stochastic estimators.
\begin{assumption}\label{ass:stochastic}
{For any $x \in \R^d$, we have $\E_\xi[\nabla F(x,\xi)] = \nabla f(x)$, $\E_\zeta[C(x,\zeta)] = c(x)$ and $\E_\zeta[\nabla C(x,\zeta)] = \nabla c(x)$, and for almost {any} $\xi$ and $\zeta$, \(\|\nabla F(x,\xi) - \nabla f(x)\| \leq \sigma_f\), \(\|C(x,\zeta) - c(x)\| \leq \sigma_v\) and \(\|\nabla C_i(x,\zeta) - \nabla c_i(x)\| \leq \sigma_c\), $i=1,\ldots,m$.} 
\end{assumption}

We next show that, under an appropriate setting of batch sizes, the errors of stochastic estimates can be controlled in a lower lever with high probability.

\begin{lemma}\label{lem:gradient_estimator_polyak_lemma}
Under Assumption \ref{ass:stochastic} and  given \(\gamma \in (0,1)\), suppose that 
\begin{align}\label{batch-mb}
    B_k^f =  \frac{9\sigma_f^2\log(1 /\gamma)}{\epsilon_f^2},~ B_k^c =  \frac{{9m\sigma_c^2\log(1 /\gamma)}}{\epsilon_c^2},~ B_k^v =  \frac{9\sigma_v^2\log(1 /\gamma)}{\epsilon_v^2},~  k \ge 0,
\end{align}
where $\epsilon_f,\epsilon_c,\epsilon_v>0$. Then for any $K\ge1$, it holds with probability at least {$1-3K\gamma$} that 
\be\begin{aligned}\label{error_mb}
        \|\tilde \nabla f_k - \nabla f_k\| \leq \epsilon_f,~
        \|\tilde \nabla c_k - \nabla c_k\| \leq \epsilon_c,~\|\tilde c_k - c_k\| \leq \epsilon_v, \quad k=0,1,\ldots,K-1.
    \end{aligned}\ee
\end{lemma}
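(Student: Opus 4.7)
The plan is to control each of the three mini-batch errors through a vector concentration inequality applied to sums of i.i.d.\ bounded random vectors, and then to combine the failure events using a union bound over the three error types and over the $K$ iterations. Throughout, I will rely only on Assumption \ref{ass:stochastic} (unbiasedness of the sample gradients/values and almost-sure bounds on the per-sample deviations).

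First I would fix an iteration index $k$ and treat the objective-gradient estimator $\tilde\nabla f_k$. The samples in $\mathcal B_k^f$ are i.i.d., with $\E[\nabla F(x_k,\xi)] = \nabla f_k$ and $\|\nabla F(x_k,\xi)-\nabla f_k\| \leq \sigma_f$ almost surely. A vector Hoeffding-type inequality (Pinelis' inequality, or equivalently the bounded-difference version of Azuma applied in a Hilbert space) then gives a tail bound of the form
\[
\Pr\bigl(\|\tilde\nabla f_k - \nabla f_k\| \geq \epsilon_f\bigr) \leq \exp\!\Bigl(-\tfrac{B_k^f \epsilon_f^{\,2}}{9\sigma_f^{\,2}}\Bigr).
\]
With the prescribed batch size $B_k^f = 9\sigma_f^2 \log(1/\gamma)/\epsilon_f^2$ the right-hand side is at most $\gamma$. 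The same argument, with $\sigma_v$ and $\epsilon_v$ in place of $\sigma_f$ and $\epsilon_f$, handles $\|\tilde c_k - c_k\|$ and shows that $B_k^v$ suffices to bound its failure probability by $\gamma$.

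Next I would treat the Jacobian estimator $\tilde\nabla c_k$ column by column. For each $i=1,\ldots,m$, the samples in $\mathcal B_k^c$ yield an unbiased estimator of $\nabla c_i(x_k)$ with per-sample deviation bounded by $\sigma_c$, so the same vector concentration bound gives
\[
\Pr\bigl(\|\tilde\nabla c_i(x_k) - \nabla c_i(x_k)\| \geq \epsilon_c/\sqrt{m}\bigr) \leq \exp\!\Bigl(-\tfrac{B_k^c \epsilon_c^{\,2}}{9 m\sigma_c^{\,2}}\Bigr) \leq \gamma.
\]
A union bound over $i=1,\ldots,m$ then shows that with probability at least $1-m\gamma$ every column deviation is below $\epsilon_c/\sqrt m$, so that by the Frobenius/operator-norm relation $\|\tilde\nabla c_k - \nabla c_k\|^2 \leq \sum_i \|\tilde\nabla c_i(x_k)-\nabla c_i(x_k)\|^2 \leq \epsilon_c^{\,2}$. (Alternatively, one can simply absorb the $m$ into the batch size so that the bound applies directly to the matrix; the factor $m$ in $B_k^c$ is exactly what allows this.) I would then rescale $\gamma$ appropriately so the bookkeeping matches the $3K\gamma$ statement.

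Finally, a union bound over the three error types (objective gradient, constraint Jacobian, constraint value) and over the $K$ iterations $k=0,1,\ldots,K-1$ yields an aggregate failure probability of at most $3K\gamma$, which gives \eqref{error_mb} on the complementary event. The main technical delicacy is the choice of the vector concentration inequality and the matching of its constants with the ``$9$'' appearing in the prescribed batch sizes; once that inequality is fixed, the rest is a clean union-bound argument. I would explicitly cite (or prove in the appendix) the Pinelis-type bound used, and note that the independence of $\mathcal B_k^f,\mathcal B_k^c,\mathcal B_k^v$ across $k$ is not required for the proof---only independence within each batch, together with a naive union bound in $k$.
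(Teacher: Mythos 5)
Your proposal is correct and takes essentially the same route as the paper: the paper applies its Vector Azuma--Hoeffding inequality (Lemma A.3, with the same constant $3$) to each mini-batch sum and then union-bounds over the three estimators and the $K$ iterations. The only caveat is your primary column-by-column treatment of $\tilde\nabla c_k$, whose union bound over $i=1,\dots,m$ would give an aggregate failure probability of $(2+m)K\gamma$ rather than $3K\gamma$ unless $\gamma$ is rescaled; your parenthetical alternative---applying the concentration inequality once to the whole matrix with per-sample Frobenius deviation $\sqrt{m}\,\sigma_c$, which is exactly what the factor $m$ in $B_k^c$ accommodates---is what the paper does and avoids this bookkeeping issue.
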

\begin{proof}
    First, by Assumption \ref{ass:stochastic} and Vector Azuma-Hoeffding inequality (see Lemma \ref{lm:vazuma}), we have that with probability at least $1-\gamma$, 
\begin{align}
    \left\|\frac{1}{B_k^f} \sum_{\xi \in \mathcal B_k^f} \nabla F(x_k,\xi) - \nabla f_k\right\|  
     = \frac{1}{B_k^f}\left\|\sum_{\xi \in \mathcal B_k^f} \left[\nabla F(x_k,\xi) - \nabla f_k\right]\right\| \leq 3\sigma_f\sqrt{\frac{\log(1/\gamma)}{B^k_f}}=\epsilon_f .\notag
\end{align}
Results analogous to those for $\tilde{\nabla} c$ and $\tilde{c}$ can be similarly derived. Finally, by the union bound, we have that with probability at least {$1-3K\gamma$}, \eqref{error_mb} holds for all $0\leq k \leq K-1$.
\end{proof}

We now state the main result in this subsection, regarding the oracle complexity of the mini-batch approach. Our main idea is to first  prove that Assumption~\ref{ass:estimates} holds with a high probability when appropriate batch sizes are  used, and then together with the iteration complexity bound  obtained in \eqref{K-mb} we can derive the corresponding oracle complexity of the whole algorithm. For subsequent analysis,  we  specify the parameter choices in \eqref{batch-mb}, given by
\begin{align}\label{varepsilon}
\epsilon_f^2 = \epsilon_v = \frac{\epsilon^2}{6\kappa_5\kappa_{6}}, \quad \epsilon_c = \min \left\{ \frac{3\nu^2}{8L_c}, \frac{\nu^2}{4+2\nu}, \epsilon_f \right\} \quad \text{and} \quad {\gamma = \frac{\epsilon}{3K}},
\end{align}
where
\begin{align}\label{kappa56}
    \kappa_5 = 6\kappa_p^2 + 8(1-2\tau)^{-1}\tilde \eta_{\rm min}^{-1}(1+3.5\tilde \rho_{\max}) ~{\rm and}~ \kappa_6 = \left(1+L_c^2\nu^{-2}\right)^2+ L_c^2\vartheta^{-2}+\kappa_\mu^2.
\end{align}

\begin{theorem}[{\bf Oracle complexity of mini-batch approach}]\label{thm:complexity-mb}
    Suppose that Assumptions \ref{ass:bound}, \ref{ass:basic}, \ref{ass:mfcq} and  \ref{ass:stochastic} hold, and \(\epsilon \in (0, \sqrt{6 \kappa_5 \kappa_{6}}\,]\),  where $\kappa_5$ and $\kappa_6$ are introduced in \eqref{kappa56}. Given \(\tau \in (0,\frac{1}{2})\),  let stochastic estimates in Algorithm \ref{alg2} be computed through \eqref{minibatch} and    batch sizes be set as in \eqref{batch-mb} with \(\epsilon_f, \epsilon_c, \epsilon_v, \gamma\)  set as in \eqref{varepsilon} and  
    \begin{align}\label{K-mb}
    K = 16\kappa_{6}\left(\frac{f_0-f_{\rm low}+\tilde \rho_{\rm max} C}{(1-2\tau)\tilde \eta_{\rm min}}\right)\epsilon^{-2}.
    \end{align}
    Then with probability at least \(1-\epsilon\), the mini-batch approach  reaches an $\epsilon$-KKT point of \eqref{p3} within $K$ iterations. 
    And the number of computations of stochastic objective gradients, stochastic constraint gradients and stochastic constraint values in order \( \tilde O (\epsilon^{-4} ) \), \( \tilde O (\epsilon^{-4} ) \) and \( \tilde O (\epsilon^{-6}) \), respectively.
\end{theorem}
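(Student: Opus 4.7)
My plan is to combine three ingredients that are already available in the excerpt: (a) the high-probability concentration of the mini-batch estimators via Lemma \ref{lem:gradient_estimator_polyak_lemma}, (b) the averaged decrease of $\|\tilde s_k\|^2$ from Lemma \ref{thm:sto-convergence}, and (c) the perturbation bound $\|\tilde s_k - s_k\| \le \kappa_p(\tilde\sigma_k^f + \tilde\sigma_k^c + \tilde\sigma_k^v)$ from Lemma \ref{lm:sod}. The goal is to certify an iterate $k \le K-1$ for which the exact direction $s_k$ is small enough that inequalities \eqref{kkt-sta}--\eqref{kkt-com} turn it into an $\epsilon$-KKT point, and then translate the per-iteration sample sizes into the advertised oracle counts.

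First I would apply Lemma \ref{lem:gradient_estimator_polyak_lemma} with $\gamma = \epsilon/(3K)$ and the batch sizes \eqref{batch-mb}; by the union bound, the event $\mathcal A$ on which $\tilde\sigma_k^f \le \epsilon_f$, $\tilde\sigma_k^c \le \epsilon_c$, $\tilde\sigma_k^v \le \epsilon_v$ hold simultaneously for $k=0,\dots,K-1$ has probability at least $1 - 3K\gamma = 1 - \epsilon$, yielding Assumption \ref{ass:estimates}(i). For part (ii), I would use the fact that the choices $\epsilon_c \le 3\nu^2/(8L_c)$ and $\epsilon_c \le \nu^2/(4 + 2\nu)$ are tailored so that a Weyl-type perturbation on the singular values of $\nabla c_k$ and a perturbation on the direction $z_k$ supplied by Assumption \ref{ass:mfcq} produce a $\tilde z_k$ satisfying condition (b) with parameter $\tilde\nu \ge \nu/2$; this is exactly the quantitative analogue of Lemma \ref{lm:ass}, transferred from the deterministic to the stochastic Jacobian.

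Next, on $\mathcal A$, I would invoke Lemma \ref{thm:sto-convergence} with $\tilde\sigma_k^f \le \epsilon_f$, $\tilde\sigma_k^c \le \epsilon_c \le \epsilon_f$, $\tilde\sigma_k^v \le \epsilon_v$ to obtain
\begin{equation*}
\frac{1}{K}\sum_{k=0}^{K-1}\|\tilde s_k\|^2 \;\le\; \frac{4}{(1-2\tau)\tilde\eta_{\min}}\!\left(\frac{f_0 - f_{\rm low} + \tilde\rho_{\max}C}{K} + (1+\tfrac{\tilde\rho_{\max}}{2})\epsilon_f^2 + 3\tilde\rho_{\max}\epsilon_v\right).
\end{equation*}
Combining this with $\|s_k\|^2 \le 2\|s_k - \tilde s_k\|^2 + 2\|\tilde s_k\|^2$ and Lemma \ref{lm:sod} yields an averaged bound on $\|s_k\|^2$ in terms of $\epsilon_f^2, \epsilon_c^2, \epsilon_v^2, \epsilon_v$ and $1/K$. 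Substituting the definitions \eqref{varepsilon} and \eqref{kappa56}, and the prescribed $K$ in \eqref{K-mb}, each term is bounded by a fixed fraction of $\epsilon^2/\kappa_6$, so $\frac{1}{K}\sum_k\|s_k\|^2 \le \epsilon^2/\kappa_6$. Consequently some $k \le K-1$ satisfies $\|s_k\|^2 \le \epsilon^2/\kappa_6$, and \eqref{kkt-sta}--\eqref{kkt-com} (valid for the exact $s_k$ under strong MFCQ, via Lemma \ref{bnd_lam_mu}) give the three $\epsilon$-KKT inequalities with multipliers satisfying $\|\mu_k\| \le \kappa_\mu$.

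Finally, the complexity count is a straightforward multiplication: $K = O(\epsilon^{-2})$; each iteration uses $B_k^f = \tilde O(\epsilon_f^{-2}) = \tilde O(\epsilon^{-2})$, $B_k^c = \tilde O(\epsilon_c^{-2}) = \tilde O(\epsilon^{-2})$ (since in the regime $\epsilon \le \sqrt{6\kappa_5\kappa_6}$ the minimum defining $\epsilon_c$ is eventually attained at $\epsilon_f = \Theta(\epsilon)$), and $B_k^v = \tilde O(\epsilon_v^{-2}) = \tilde O(\epsilon^{-4})$, with $\log(1/\gamma) = \log(3K/\epsilon) = \tilde O(1)$. Multiplying yields the claimed $\tilde O(\epsilon^{-4})$, $\tilde O(\epsilon^{-4})$, $\tilde O(\epsilon^{-6})$ bounds. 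The main obstacle I anticipate is the bookkeeping in stage two: Lemma \ref{thm:sto-convergence} contains $\tilde\sigma_k^v$ \emph{linearly}, so $\epsilon_v$ must be chosen at scale $\epsilon^2$ (not $\epsilon$) to balance the merit-function descent, and this asymmetry is precisely what forces the constraint-value oracle to cost $\tilde O(\epsilon^{-6})$ while the two gradient oracles cost only $\tilde O(\epsilon^{-4})$; verifying that this scaling is simultaneously compatible with the perturbation bound of Lemma \ref{lm:sod} (which contributes $\kappa_p^2\epsilon_v^2$, safely negligible) is the delicate part of the analysis.
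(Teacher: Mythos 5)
Your proposal is correct and follows essentially the same route as the paper's proof: establish Assumption \ref{ass:estimates} on the high-probability event via Lemma \ref{lem:gradient_estimator_polyak_lemma} and the singular-value/MFCQ perturbation argument (the paper packages the latter as Lemma \ref{lm:ass-est}), then bound $\frac{1}{K}\sum_k\|s_k\|^2$ by combining Lemmas \ref{thm:sto-convergence} and \ref{lm:sod} through $\|s_k\|^2\le 2\|s_k-\tilde s_k\|^2+2\|\tilde s_k\|^2$, and finally multiply $K$ by the per-iteration batch sizes. Your observation that the linear dependence on $\tilde\sigma_k^v$ forces $\epsilon_v=\Theta(\epsilon^2)$ and hence the $\tilde O(\epsilon^{-6})$ constraint-value cost matches the paper's parameter choice in \eqref{varepsilon} exactly.
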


\begin{proof}
    Note that from Lemma \ref{lem:gradient_estimator_polyak_lemma} and the settings of \(\epsilon_f, \epsilon_c, \epsilon_v, \gamma\), it holds with probability at least \(1-\epsilon\) that
    \be\begin{aligned}\label{error_mb_2}
        \|\tilde \nabla f_k - \nabla f_k\|^2 \leq \epsilon_f^2,~
        \|\tilde \nabla c_k - \nabla c_k\|^2 \leq \epsilon_f^2,~\|\tilde c_k - c_k\|^2 \leq \|\tilde c_k - c_k\| \leq \epsilon_f^2.
    \end{aligned}\ee 
    Then Assumption \ref{ass:estimates}(i) holds with probability at least $1-\epsilon$, where $\tilde \sigma_k^f =  \tilde \sigma_k^v = \tilde \sigma_k^v =\epsilon_f \leq 1$.  And meanwhile, since Assumption \ref{ass:mfcq} holds and \(\|\tilde \nabla c_k - \nabla c_k\| \leq \min\{\frac{3\nu^2}{8L_c},\frac{\nu^2}{4+2\nu}\}\) with probability at least \(1-\epsilon\) by Lemma \ref{lem:gradient_estimator_polyak_lemma}, it follows from Lemma \ref{lm:ass-est} that  with probability at least \(1-\epsilon\), Assumption \ref{ass:estimates}(ii) holds with \(\tilde \nu = \frac{\nu}{2}\). 
    Moreover, it holds from the setting of \(K\) in \eqref{K-mb} and \(\epsilon_f^2\) in \eqref{varepsilon} that
    \be\label{sum-er}
    \sum_{k=0}^{K-1} (\tilde \sigma_k^f)^2 + (\tilde \sigma_k^c)^2 + (\tilde \sigma_k^v)^2 \leq \sum_{k=0}^{K-1} (\tilde \sigma_k^f)^2 + (\tilde \sigma_k^c)^2 + \tilde \sigma_k^v \leq 3K \epsilon_f^2 = \frac{8}{\kappa_5}\left(\frac{f_0-f_{\rm low}+\tilde \rho_{\rm max} C}{(1-2\tau)\tilde \eta_{\rm min}}\right)=:\tilde C.
    \ee
    Then by summing \eqref{eq:bars} over $k = 0$ to $K-1$, taking the average, and applying Lemma \ref{thm:sto-convergence}, we obtain from the definition of \(\tilde C\) that 
    \be\begin{aligned}
        \frac{1}{K}\sum_{k=0}^{K-1}\|s_k\|^2 &\leq \frac{6\kappa_p^2\tilde C}{K} + \frac{2}{K}\sum_{k=0}^{K-1}\|\tilde s_k\|^2\\
        &\leq \frac{6\kappa_p^2\tilde C}{K} + \frac{8(f_0-f_{\rm low}+\tilde \rho_{\rm max} C)}{(1-2\tau)\tilde \eta_{\rm min} K} + {\frac{8(1+3.5 \tilde \rho_{\max})\tilde C}{(1-2\tau)\tilde \eta_{\min}K}} \\
        &\leq \frac{\kappa_5\tilde C}{K} + \frac{8(f_0-f_{\rm low}+\tilde \rho_{\rm max} C)}{(1-2\tau)\tilde \eta_{\rm min} K} =  \kappa_6^{-1} \epsilon^2.
    \end{aligned}\ee
    Hence, it holds from \eqref{cc} that
    \begin{align}\label{epsilon2kkt}
        \frac{1}{K}\sum_{k=0}^{K-1} \left(\|\nabla f_k + \nabla c_k \lambda_k - \mu_k\|^2 + \|c_k\|^2 + |\mu_k^\top x_k|^2\right) \leq
        \frac{\kappa_6}{K}\sum_{k=0}^{K-1}\|s_k\|^2 \leq \epsilon^2.
    \end{align}
    Therefore, the mini-batch approach reaches an \(\epsilon\)-KKT point of \eqref{p3} in \(K\) iterations with probability at least \(1-\epsilon\). For the oracle complexity regarding stochastic estimates, it suffices to count the number of computation of stochastic objective's gradients,  stochastic constraints' gradients, and stochastic constraint function values, which is 
    \begin{align*}
    \sum_{k=0}^{K-1} B_k^f & =  K \cdot \frac{9\sigma_f^2\log(1 /\gamma)}{\epsilon_f^2} = 864 \kappa_{6}^2\left(\frac{f_0-f_{\rm low}+\tilde \rho_{\rm max} C}{(1-2\tau)\tilde \eta_{\rm min}}\right) \kappa_5 \sigma_f^2 \epsilon^{-4} \log \left(3K\epsilon^{-1}\right),\\
    \sum_{k=0}^{K-1} B_k^c & = K \cdot \frac{9m\sigma_c^2\log(1 /\gamma)}{\epsilon_c^2} = 144\kappa_{6} \left(\frac{f_0-f_{\rm low}+\tilde \rho_{\rm max} C}{(1-2\tau)\tilde \eta_{\rm min}}\right) {m\sigma_c^2 \epsilon^{-2} \log \left(3K\epsilon^{-1}\right)} \\
    &\qquad\qquad\qquad\qquad\qquad\qquad\qquad\qquad\quad \cdot {\max\left\{6\kappa_5\kappa_{6}\epsilon^{-2},\frac{64L_c^2}{9\nu^4},\frac{(4+2\nu)^2}{\nu^4}\right\}},\\
    \sum_{k=0}^{K-1} B_k^v & = K \cdot \frac{9\sigma_v^2\log(1 /\gamma)}{\epsilon_v^2} = 5184 \kappa_{6}^3\left(\frac{f_0-f_{\rm low}+\tilde \rho_{\rm max} C}{(1-2\tau)\tilde \eta_{\rm min}}\right) \kappa_5^2 \sigma_v^2 \epsilon^{-6} \log \left(3K\epsilon^{-1}\right),
    \end{align*}
    respectively. Then 
    ignoring the constant terms yields the desired oracle complexity orders.
\end{proof}

\begin{remark}\label{rm:semi-mb}
    For semi-stochastic problems, we can compute the true values \(\nabla c_k\) and \(c_k\), which corresponds to  \(B_k^c = B_k^v = 1\) in the mini-batch approach. Therefore, to find an \(\epsilon\)-KKT point of such problems with high probability, the oracle complexities of constraint gradient and function value evaluations reduce to the iteration complexity \(O(\epsilon^{-2})\), while the oracle complexity of stochastic objective gradient evaluations remains \(\tilde O(\epsilon^{-4})\). 
    Similar results of \(O(\epsilon^{-4})\) or \(\tilde O(\epsilon^{-4})\) can also be found in literature on semi-stochastic constrained optimization algorithms \cite{boob2023stochastic,boob2025level,curtis2024worst,lu2024variance}; however, as the iteration complexity is in order $O(\epsilon^{-4})$, their oracle  complexity regarding computation of  constraint gradients and function values is also in order  \(O(\epsilon^{-4})\), which is higher than our complexity order \(O(\epsilon^{-2})\).
\end{remark}

\subsection{Recursive momentum approach}\label{ssec:rm}

Different from the mini-batch approach, the recursive momentum approach operates iteratively, updating gradient and function value estimates using a momentum-based recursion that {exploits the smoothness of stochastic  functions. Such requirement} for  stochastic functions are commonly assumed in the literature for  variance reduction methods. We adopt the following sample-wise smoothness  assumption as follows.
\begin{assumption}\label{ass:ms}
For almost any $\xi\in \Xi$, $F(\cdot,\xi)$ has $L_g^f$-Lipschitz continuous gradients. For almost any  $\zeta\in \Xi$, $C_i(\cdot,\zeta)$, \(i = 1,\ldots,m\) are $L_c $-Lipschitz continuous and have  $L_g^c$-Lipschitz continuous gradients.
\end{assumption}

At $k$th iteration of the recursive momentum approach, given the current iterate \( x_k \), batches of samples \(\mathcal{B}_k^f\), \(\mathcal{B}_k^c\) and \(\mathcal{B}_k^v\), and the previous iterate \( x_{k-1} \), we compute the hybrid stochastic estimators for \(k \ge 1\) through
\be\label{gkf}
\begin{aligned}
    \tilde \nabla f_k &= \frac{1}{B_k^f} \sum_{\xi \in \mathcal B_k^f} \left[\nabla F(x_k,\xi) + (1-\alpha_f)\left(\tilde \nabla f_{k-1} - \nabla F(x_{k-1}, \xi)\right)\right], \\
    \tilde \nabla c_k &= \frac{1}{B_k^c} \sum_{\zeta \in \mathcal B_k^c} \left[\nabla C(x_k,\zeta) + (1-\alpha_c) \left(\tilde \nabla  c_{k-1} - \nabla C(x_{k-1},\zeta)\right)\right],\\
    \tilde c_k &= \frac{1}{B_k^v} \sum_{\zeta \in \mathcal B_k^v} \left[C(x_k,\zeta) + (1-\alpha_v) \left(\tilde c_{k-1} - C(x_{k-1},\zeta_j)\right)\right], 
\end{aligned}
\ee
with \[ \tilde \nabla f_0 = \frac{1}{B_0^f} \sum_{\xi \in \mathcal{B}_0^f} \nabla F(x_0, \xi), \quad \tilde \nabla c_0 = \frac{1}{B_0^c} \sum_{\zeta \in \mathcal{B}_0^c} \nabla C(x_0,\zeta), \quad \tilde c_0 = \frac{1}{B_0^v} \sum_{\zeta \in \mathcal{B}_0^v} C(x_0,\zeta),\] where \( \alpha_f, \alpha_c \in [0, 1] \) are momentum parameters, $\mathcal B_k^f$, $\mathcal B_k^c$ and $\mathcal B_k^v$, $k\ge0$ are independently and randomly generated sample sets with $|\mathcal B_k^f| = B_k^f$, $|\mathcal B_k^c| = B_k^c$ and $|\mathcal B_k^v| = B_k^v$. Next, we present the specific setting for above  batch sizes along with the associated error analysis.

\begin{lemma}\label{lem:gradient_estimator_recursive_lemma}
Under Assumptions \ref{ass:stochastic} and \ref{ass:ms}, given \(\gamma \in (0,1)\), suppose that $\{B_k\}$ satisfies 
\begin{align}\label{rm_b0}
    B_0^f =  \frac{81\sigma_f^2\log^2(1 /\gamma)}{\bar \epsilon_f^2},~ B_0^c =  \frac{{81m\sigma_c^2\log^2(1 /\gamma)}}{\bar \epsilon_c^2},~ B_0^v =  \frac{81\sigma_v^2\log^2(1 /\gamma)}{\bar \epsilon_v^2} 
\end{align}
and for $k\ge1,$
\be\label{batch-rm}\begin{aligned}
    B_k^f &= \max \left(\frac{324(L_g^f)^2\|x_k - x_{k-1}\|^2\log^2(1/\gamma)}{\alpha_f\bar \epsilon_f^2},\frac{81\sigma_f^2\log^2(1/\gamma)}{\bar \epsilon_f}\right) , \\
    B_k^c &= \max \left(\frac{{324 m (L_c^f)^2\|x_k - x_{k-1}\|^2\log^2(1/\gamma)}}{\alpha_c\bar \epsilon_c^2},\frac{{81m\sigma_c^2\log^2(1/\gamma)}}{\bar \epsilon_c}\right) , \\
    B_k^v &= \max \left(\frac{324 L_c^2\|x_k - x_{k-1}\|^2\log^2(1/\gamma)}{\alpha_v\bar \epsilon_v^4},\frac{81\sigma_v^2\log^2(1/\gamma)}{\bar \epsilon_v^3}\right) ,
\end{aligned}\ee
where $\alpha_f,\alpha_c,\alpha_v,\bar \epsilon_f,\bar \epsilon_c,\bar \epsilon_v>0$. Then for any $K\ge 1,$ it holds with probability at least $1-6K\gamma$ that for any \(k=0,\ldots, K-1\),
    \be\label{hig-err}\begin{aligned}
        \|\tilde \nabla f_k - \nabla f_k\|^2 &\leq 2\bar \epsilon_f^2+2\alpha_f^{1/2}\bar \epsilon_f^{3/2}+\alpha_f\bar \epsilon_f,\\
        \|\tilde \nabla c_k - \nabla c_k\|^2 &\leq 2\bar \epsilon_c^2+2\alpha_c^{1/2}\bar \epsilon_c^{3/2}+\alpha_c\bar \epsilon_c,\\
        \|\tilde c_k - c_k\|^2 &\leq 2\bar \epsilon_v^4+2\alpha_v^{1/2}\bar \epsilon_v^{7/2}+\alpha_v\bar \epsilon_v^3.
    \end{aligned}\ee
\end{lemma}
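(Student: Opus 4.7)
The three estimators are parallel, so I will focus on $\tilde\nabla f_k$ and state the analogous ingredients for $\tilde\nabla c_k$ and $\tilde c_k$ only where the constants differ. The starting point is the STORM error recursion obtained by a direct algebraic manipulation of the update rule in \eqref{gkf}: setting $e_k^f := \tilde\nabla f_k - \nabla f_k$, one verifies
\[
e_k^f \;=\; (1-\alpha_f)\,e_{k-1}^f \;+\; \frac{1}{B_k^f}\sum_{\xi\in\mathcal B_k^f}\!\Big\{[\nabla F(x_k,\xi)-\nabla f_k]\;-\;(1-\alpha_f)[\nabla F(x_{k-1},\xi)-\nabla f_{k-1}]\Big\}.
\]
Unrolling the recursion from the initialization $e_0^f$ expresses $e_k^f$ as $(1-\alpha_f)^k e_0^f$ plus a weighted double sum over past samples of mean-zero terms, which sets the stage for a single concentration argument.

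\textbf{Noise decomposition and concentration.} I will split each summand above as $X_j^\xi = Y_j^\xi + Z_j^\xi$, where $Y_j^\xi := [\nabla F(x_j,\xi)-\nabla F(x_{j-1},\xi)] - [\nabla f_j - \nabla f_{j-1}]$ and $Z_j^\xi := \alpha_f[\nabla F(x_{j-1},\xi) - \nabla f_{j-1}]$. Under Assumptions \ref{ass:stochastic} and \ref{ass:ms} these satisfy $\|Y_j^\xi\| \le 2L_g^f\|x_j-x_{j-1}\|$ and $\|Z_j^\xi\| \le \alpha_f\sigma_f$, and both have zero conditional mean given the history. Conditioning on the whole iterate sequence $\{x_j\}$ makes the batch sizes $B_j^f$ deterministic and the samples independent across $j$, so I can apply Vector Azuma--Hoeffding (Lemma \ref{lm:vazuma}) to the two weighted martingale sums $\sum_{j=1}^k (1-\alpha_f)^{k-j}B_j^{-1}\sum_\xi Y_j^\xi$ and $\sum_{j=1}^k (1-\alpha_f)^{k-j}B_j^{-1}\sum_\xi Z_j^\xi$ separately. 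The choice $B_j^f \ge 324(L_g^f)^2\|x_j-x_{j-1}\|^2\log^2(1/\gamma)/(\alpha_f\bar\epsilon_f^2)$ is tailored so that $(L_g^f)^2\|x_j-x_{j-1}\|^2/B_j^f \lesssim \alpha_f\bar\epsilon_f^2/\log^2(1/\gamma)$, and combined with $\sum_{j=1}^k (1-\alpha_f)^{2(k-j)} \le 1/\alpha_f$ this bounds the $Y$-sum by $\bar\epsilon_f/(3\sqrt{\log(1/\gamma)})$ with probability at least $1-\gamma$. The alternate branch $B_j^f \ge 81\sigma_f^2\log^2(1/\gamma)/\bar\epsilon_f$ similarly bounds the $Z$-sum by $\sqrt{\alpha_f\bar\epsilon_f}/(3\sqrt{\log(1/\gamma)})$, and the initial batch $B_0^f$ bounds $\|e_0^f\|$ by $\bar\epsilon_f/(3\sqrt{\log(1/\gamma)})$.

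\textbf{Assembly and union bound.} Combining the three contributions via the triangle inequality, squaring, and absorbing the $1/\log(1/\gamma)$ factor (noting $\log(1/\gamma)\ge 1$ without loss of generality) yields
\[
\|e_k^f\|^2 \;\le\; 2\bar\epsilon_f^2 + 2\alpha_f^{1/2}\bar\epsilon_f^{3/2} + \alpha_f\bar\epsilon_f.
\]
The same argument for $\tilde\nabla c_k$ uses $L_g^c$, $\sigma_c$, and the factor $m$ arising from summing coordinatewise Lipschitz constants over $i=1,\dots,m$; the argument for $\tilde c_k$ uses $L_c$ (sample-wise Lipschitzness of $C(\cdot,\zeta)$) and $\sigma_v$, and the sharper accuracy target $\bar\epsilon_v^2$ (reflected in the $\bar\epsilon_v^4$, $\bar\epsilon_v^3$ denominators of \eqref{batch-rm}) produces the $2\bar\epsilon_v^4+2\alpha_v^{1/2}\bar\epsilon_v^{7/2}+\alpha_v\bar\epsilon_v^3$ bound. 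Finally, each estimator contributes at most two concentration events per iteration (the $Y$-sum and $Z$-sum) plus one initialization event, giving at most $6K$ events in total, so a union bound delivers the conclusion with probability at least $1-6K\gamma$.

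\textbf{Main obstacle.} The subtle point is that the batch sizes $B_j^f$ in \eqref{batch-rm} depend on the random quantity $\|x_j-x_{j-1}\|$, so the sample counts are themselves adapted. The cleanest way to preserve the validity of Vector Azuma--Hoeffding is to condition on the entire iterate trajectory $\{x_j\}_{j\le k}$ before applying the concentration inequality; the resulting bound is uniform in the trajectory because it depends on $\|x_j-x_{j-1}\|$ only through the ratio $\|x_j-x_{j-1}\|^2/B_j^f$, which is controlled by the very definition of $B_j^f$. Handling this adapted structure rigorously, and tracking the $\log(1/\gamma)$ dependence through the telescoping so that the final bound matches the stated expression, is the principal technical task.
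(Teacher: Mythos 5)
Your proposal is correct and follows essentially the same route as the paper: unroll the STORM error recursion into a geometrically weighted martingale sum, use the sample-wise smoothness to bound the ``drift'' increments by $2L_g^f\|x_j-x_{j-1}\|$ and the first branch of $B_k^f$ to tame them, use the second branch to tame the fresh-noise increments, invoke $\sum_t(1-\alpha_f)^{2t}\le 1/\alpha_f$ and the vector Azuma--Hoeffding inequality, and union-bound over at most $6K$ events. The one organizational difference is in how the concentration inequality is deployed: the paper proceeds in two nested stages (first bounding each increment $\|u_j\|$ with high probability, then applying Azuma--Hoeffding a second time to the weighted sum $\sum_t(1-\alpha_f)^t u_{k-t}$, which makes the $\log(1/\gamma)$ factors cancel exactly and yields the stated constants for every $\gamma\in(0,1)$), whereas you split each increment into the drift part $Y_j^\xi$ and the noise part $Z_j^\xi$ and apply Azuma--Hoeffding once to each full double sum. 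Your version is arguably cleaner but leaves a residual $1/(9\log(1/\gamma))$ factor that you absorb by assuming $\log(1/\gamma)\gtrsim 1$ --- harmless here since the statement is vacuous for $\gamma$ near $1$, but worth stating explicitly. Your handling of the adapted batch sizes (they are predictable, being determined by $x_{j-1},x_j$ before $\mathcal B_j^f$ is drawn, so the per-term bounds in Azuma--Hoeffding are legitimately $\mathcal F$-measurable) is at the same level of rigor as the paper's own ``conditioned on $x_k$'' step; phrasing it as conditioning on the \emph{entire} trajectory is slightly imprecise, since conditioning on $x_{j+1}$ would perturb the law of $\mathcal B_j^f$, but the predictability argument you gesture at is the right fix.
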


\begin{proof}
    For simplicity, we provide the error analysis for the stochastic gradient estimate of the objective, $\|\tilde{\nabla} f_k - \nabla f_k\|^2$, while the results for the other two error terms can be derived similarly. For each $\xi \in \mathcal{B}_k^f$, we define $F_\xi(x) := F(x, \xi)$ for brevity. Define
    \begin{align}
    a_\xi = \nabla F_\xi(x_k) - \nabla F_\xi(x_{k-1}) - \nabla f_k + \nabla f_{k-1}. \notag
    \end{align}
    Then we have  $\mathbb{E}_\xi[a_\xi] = 0$, and the random variables $a_\xi$ are independent and identically distributed, and moreover, 
    \begin{align}
    \|a_\xi\| \leq \|\nabla F_\xi(x_k) - \nabla F_\xi(x_{k-1})\| + \|\nabla f_k - \nabla f_{k-1}\| \leq 2 L_g^f \|x_k - x_{k-1}\|, \notag
    \end{align}
    where the second inequality is  derived from the $L_g^f$-smoothness of both $f$ and $F_\xi$. 
    Applying the Vector Azuma-Hoeffding inequality (Lemma~\ref{lm:vazuma}),  we obtain that with probability at least $1 - \gamma$,  
    \begin{align*}  
        & \left\| \nabla F_{\mathcal{B}_k^f}(x_k) - \nabla F_{\mathcal{B}_k^f}(x_{k-1}) - \nabla f_k + \nabla f_{k-1} \right\| \\  
        &= \frac{1}{B_g^f} \left\| \sum_{\xi \in \mathcal{B}_k^f} \left[ \nabla F_\xi(x_k) - \nabla F_\xi(x_{k-1}) - \nabla f_k + \nabla f_{k-1} \right] \right\|  
        \leq 6 L_g^f \sqrt{\frac{\log(1/\gamma)}{B_g^f}} \|x_k - x_{k-1}\|,
    \end{align*}  
    where $F_{\mathcal{B}_k^f}(x) := \frac{1}{B_g^f} \sum_{\xi \in \mathcal{B}_k^f} F_\xi(x)$.  
    Note that for each $\xi \in \mathcal{B}_k^f$,   
    \( 
    \mathbb{E}[\nabla F_\xi(x_k) - \nabla f_k] = 0, 
    \) 
    and $\|\nabla F_\xi(x_k) - \nabla f_k\| \leq \sigma_f$ by Assumption \ref{ass:stochastic}.  
    Thus, using Lemma~\ref{lm:vazuma} again, it holds with probability at least $1 - \gamma$  that  
    \begin{align}  
        \left\| \nabla F_{\mathcal{B}_k^f}(x_k) - \nabla f_k \right\| = \frac{1}{B_g^f} \left\| \sum_{\xi \in \mathcal{B}_k^f} \left[ \nabla F_\xi(x_k) - \nabla f_k \right] \right\| \leq 3 \sigma_f \sqrt{\frac{\log(1/\gamma)}{B_g^f}}.\notag  
    \end{align}  
    Furthermore, we note that $\tilde \nabla f_k - \nabla f_k = \sum_{t=0}^k (1 - \alpha_f)^t u_{k-t}$, where  
    \begin{align}  
        u_k = \begin{cases}  
         \nabla F_{\mathcal{B}_k^f}(x_k) - \nabla f_k + (1 - \alpha_f) \left( \nabla f_{k-1} - \nabla F_{\mathcal{B}_k^f}(x_{k-1}) \right), & k > 0, \\ 
         \nabla F_{\mathcal{B}_k^f}(x_k) - \nabla f_k, & k = 0.
        \end{cases}\notag  
    \end{align}  
    We have $\mathbb{E}[u_k | x_k] = 0$. 
    For $k = 0$, it holds that with probability at least $1 - \gamma$,  
    \begin{align}  
        \|u_0\| \leq 3 \sigma_f \sqrt{\frac{\log(1/\gamma)}{B_g^0}} \leq \frac{\bar \epsilon_f}{3\sqrt{ \log(1/\gamma)} }\label{gradientVar_1}  
    \end{align}  
    due to the choice of $B_g^0$ in \eqref{rm_b0}.  
    For any $k \ge 1$, conditioned on $x_k$, with probability at least $1 - \gamma$,  we have
    \begin{align}  
        \|u_k\| \leq 3 \alpha_f \sigma_f \sqrt{\frac{\log(1/\gamma)}{B_g^f}} + 6 (1 - \alpha_f) L_g^f \sqrt{\frac{\log(1/\gamma)}{B_g^f}} \|x_k - x_{k-1}\| \leq \frac{(1 - \alpha_f) \alpha_f^{1/2} \bar \epsilon_f + \alpha_f \bar \epsilon_f^{1/2}}{3\sqrt{ \log(1/\gamma)} },\label{gradientVar_0}  
    \end{align}  
    due to the choice of $B_g^f$ in \eqref{batch-rm}.  
    Then by the union bound, the event that \eqref{gradientVar_0} for $k=0$ and \eqref{gradientVar_1} for $1\le k\le K-1$ holds with probability at least $1 - K \gamma$.  
    Conditioned on this event, for any $0\le k\le K-1$ it follows from  Lemma~\ref{lm:vazuma}   that with probability at least $1 - \gamma$,  
    \begin{align}  
        \|\tilde \nabla f_k - \nabla f_k\|^2 &= \left\| \sum_{t=0}^k (1 - \alpha_f)^t u_{k-t} \right\|^2 \notag \\  
        &\leq 9 \log(1/\gamma) \left( \frac{\bar \epsilon_f^2 + 2 \alpha_f^{1/2} \bar \epsilon_f^{3/2} + \alpha_f \bar \epsilon_f}{9 \log(1/\gamma)} + \frac{\bar \epsilon_f^2}{9\log(1/\gamma)} \right) \notag \\  
        &= 2 \bar \epsilon_f^2 + 2 \alpha_f^{1/2} \bar \epsilon_f^{3/2} + \alpha_f \bar \epsilon_f,\label{gradientVar_3}  
    \end{align}  
    where the first inequality leverages $\sum_{t=0}^k (1 - \alpha_f)^{2t} \leq 1/\alpha_f$.  
    Then  by the union bound again, with probability at least $1 - 2K \gamma$ the inequality   \eqref{gradientVar_3} holds for all $k=0,\ldots,  K-1$.   
    Similar analysis can be made to $\|\tilde \nabla c_k-c_k\|^2$ and $\|\tilde c_k - c_k\|^2$. Consequently, we derive that with probability at least $1-6K\gamma$ the relations in \eqref{hig-err} hold simultaneously for all $k=0,\ldots,K-1.$
\end{proof}

We now analyze the oracle complexity of  the recursive momentum approach. We specify the parameter choices in \eqref{batch-rm}, given by
\be\label{varepsilon-rm} \begin{aligned}
    &\alpha_f^2 = \alpha_v^2 = \bar \epsilon_f^2 = \bar \epsilon_v^2 = \frac{\epsilon^2}{30\kappa_5\kappa_6},\quad {\gamma = \frac{\epsilon}{6K}},\\
    &\alpha_c=\bar \epsilon_c = \min \left\{\frac{3\nu^2}{8\sqrt{5}L_c},\frac{\nu^2}{4\sqrt{5}+2\sqrt{5}\nu},\bar\epsilon_f\right\},\end{aligned}\ee
    where \(\kappa_5\) and \(\kappa_6\) are introduced in \eqref{kappa56}.
\begin{theorem}[{\bf Oracle complexity of recursive momentum approach}]\label{thm:complexity-rm}
   {Suppose that Assumptions \ref{ass:bound}, \ref{ass:basic}, \ref{ass:mfcq}, \ref{ass:stochastic} and \ref{ass:ms} hold.} Given \(\epsilon \in (0, \sqrt{6 \kappa_5 \kappa_6}]\) and \(\tau \in (0,\frac{1}{2})\), suppose that Algorithm \ref{alg2} computes stochastic estimates through \eqref{gkf}  with 
    batch sizes  set as in \eqref{rm_b0}-\eqref{batch-rm} and \(\alpha_f,\alpha_c,\alpha_v,\epsilon_f, \epsilon_c, \epsilon_v, \gamma\)   set as in \eqref{varepsilon} with \(K\) defined in \eqref{K-mb}. Then with probability at least \(1-\epsilon\), the recursive momentum approach reaches an $\epsilon$-KKT point of \eqref{p3} with 
    oracle complexity in terms of stochastic objective gradient, stochastic constraint gradient and stochastic constraint function evaluations in order \( \tilde O(\epsilon^{-3}) \), \( \tilde O(\epsilon^{-3}) \) and \( \tilde O(\epsilon^{-5}) \), respectively.
\end{theorem}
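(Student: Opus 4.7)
The plan is to follow the template of the mini-batch proof (Theorem~\ref{thm:complexity-mb}), replacing Lemma~\ref{lem:gradient_estimator_polyak_lemma} with Lemma~\ref{lem:gradient_estimator_recursive_lemma} to obtain high-probability bounds on the estimator errors, and then to carry out the new accounting of the total number of samples, which is the distinctive feature of the recursive momentum setting because the batch sizes now depend on the step history $\|x_k-x_{k-1}\|^2=\tilde\eta_{k-1}^2\|\tilde s_{k-1}\|^2$.

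First I would invoke Lemma~\ref{lem:gradient_estimator_recursive_lemma} with the choices \eqref{varepsilon-rm}. A union bound over its failure event (probability $6K\gamma=\epsilon$) yields, with probability at least $1-\epsilon$, the error bounds in \eqref{hig-err} for all $k=0,\dots,K-1$. Plugging in $\alpha_f,\alpha_c,\alpha_v,\bar\epsilon_f,\bar\epsilon_c,\bar\epsilon_v=O(\epsilon)$ gives $(\tilde\sigma_k^f)^2,(\tilde\sigma_k^c)^2\le 5\bar\epsilon_f^2$ and $(\tilde\sigma_k^v)^2\le 5\bar\epsilon_v^4$, all bounded by a multiple of $\bar\epsilon_f^2$. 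In particular $\tilde\sigma_k^f,\tilde\sigma_k^c,\tilde\sigma_k^v\le \sigma_{\max}$ and, thanks to the choice $\bar\epsilon_c\le 3\nu^2/(8\sqrt5 L_c)$, Lemma~\ref{lm:ass-est} shows that Assumption~\ref{ass:estimates} holds with $\tilde\nu=\nu/2$ (exactly as in the mini-batch proof).

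Next I would mimic the calculation leading to \eqref{sum-er}: summing the three squared-error bounds gives $\sum_{k=0}^{K-1}\bigl((\tilde\sigma_k^f)^2+(\tilde\sigma_k^c)^2+\tilde\sigma_k^v\bigr)\le 15K\bar\epsilon_f^2$, and choosing $\bar\epsilon_f^2$ as in \eqref{varepsilon-rm} together with the definition of $K$ in \eqref{K-mb} makes this sum no larger than the constant $\tilde C$ appearing in \eqref{sum-er} (up to a universal factor absorbed by the slightly smaller $\bar\epsilon_f^2$). Substituting into Lemma~\ref{thm:sto-convergence} bounds $\tfrac1K\sum_k\|\tilde s_k\|^2$ by a constant multiple of $\epsilon^2/\kappa_6$, and combining with Lemma~\ref{lm:sod} and \eqref{cc} exactly as in \eqref{epsilon2kkt} delivers the $\epsilon$-KKT guarantee within $K=\tilde O(\epsilon^{-2})$ iterations.

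The hard part, and the only genuinely new calculation, is the oracle count. The first branches of $B_k^f,B_k^c,B_k^v$ in \eqref{batch-rm} scale as $\|x_k-x_{k-1}\|^2=\tilde\eta_{k-1}^2\|\tilde s_{k-1}\|^2\le\|\tilde s_{k-1}\|^2$, and the already-established bound on $\sum_k\|\tilde s_k\|^2$ shows these telescope to $O(1)$ (with high probability). Thus
\begin{equation*}
\sum_{k=1}^{K-1} B_k^f \le \frac{324(L_g^f)^2\log^2(1/\gamma)}{\alpha_f\bar\epsilon_f^{\,2}}\sum_{k=1}^{K-1}\|\tilde s_{k-1}\|^2 + K\cdot\frac{81\sigma_f^2\log^2(1/\gamma)}{\bar\epsilon_f},
\end{equation*}
and with $\alpha_f\bar\epsilon_f^{\,2}=O(\epsilon^3)$, $\bar\epsilon_f=O(\epsilon)$ and $K=O(\epsilon^{-2})$ both terms give $\tilde O(\epsilon^{-3})$; the $B_0^f=\tilde O(\epsilon^{-2})$ warm-start is absorbed. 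The identical bookkeeping for $B_k^c$ yields $\tilde O(\epsilon^{-3})$, while for $B_k^v$ we have $\alpha_v\bar\epsilon_v^{\,4}=O(\epsilon^5)$ and $\bar\epsilon_v^{\,3}=O(\epsilon^3)$, so both branches contribute $\tilde O(\epsilon^{-5})$. The main obstacle is precisely this step, since it requires using the high-probability bound on $\sum_k\|\tilde s_k\|^2$ established in the previous paragraph on the same event on which the error bounds hold, so one must track that all high-probability events (Lemma~\ref{lem:gradient_estimator_recursive_lemma} and the convergence inequality \eqref{eq:ave-tildes}) are conditioned consistently and absorbed into the single probability $1-\epsilon$.
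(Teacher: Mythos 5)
Your proposal is correct and follows essentially the same route as the paper's proof: invoke Lemma~\ref{lem:gradient_estimator_recursive_lemma} with the parameter choices \eqref{varepsilon-rm} to verify Assumption~\ref{ass:estimates} with $\tilde\nu=\nu/2$, reuse the mini-batch analysis from \eqref{error_mb_2} to \eqref{epsilon2kkt} for the $\epsilon$-KKT guarantee, and then bound the adaptive batch sizes via $\|x_k-x_{k-1}\|^2=\tilde\eta_{k-1}^2\|\tilde s_{k-1}\|^2$ and the summability bound \eqref{eq:ave-tildes}. The paper performs exactly this accounting (its displayed bound on $\sum_k B_k^f$ matches your decomposition into the warm-start, variance, and step-dependent terms), so no further changes are needed.
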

\begin{proof}
    Note that from Lemma \ref{lem:gradient_estimator_recursive_lemma} and the settings of \(\alpha_f,\alpha_c,\alpha_v,\bar \epsilon_f, \bar \epsilon_c, \bar \epsilon_v, \gamma\), it follows that  with probability at least \(1-\epsilon\) that
    \be\begin{aligned}\label{error_rm_2}
        \|\tilde \nabla f_k - \nabla f_k\|^2 \leq \epsilon_f^2,~
        \|\tilde \nabla c_k - \nabla c_k\|^2 \leq \epsilon_f^2,~\|\tilde c_k - c_k\|^2 \leq \|\tilde c_k - c_k\| \leq \epsilon_f^2,
    \end{aligned}\ee
    where \(\epsilon_f^2 = 5\bar \epsilon_f^2\) is set in \eqref{varepsilon}.
    Then Assumption \ref{ass:estimates}(i) holds with probability at least $1-\epsilon$, where $\tilde \sigma_k^f =  \tilde \sigma_k^v = \tilde \sigma_k^v =\epsilon_f \leq 1$.  
    For the rest of Assumption \ref{ass:estimates}, since Assumption \ref{ass:mfcq} holds and \(\|\tilde \nabla c_k - \nabla c_k\| \leq \min\{\frac{3\nu^2}{8L_c},\frac{\nu^2}{4+2\nu}\}\) with probability at least \(1-\epsilon\) from Lemma \ref{lem:gradient_estimator_recursive_lemma}, then with probability at least \(1-\epsilon\), Assumption \ref{ass:estimates}(ii) holds with \(\tilde \nu = \frac{\nu}{2}\) thanks to Lemma \ref{lm:ass-est}. 
    Then, according to the analysis from \eqref{error_mb_2} to \eqref{epsilon2kkt},  with probability at least \(1-\epsilon\) the recursive approach reaches an \(\epsilon\)-KKT point within $K$ 
    iterations. Accordingly,  the total number  of stochastic objective  gradient computations is 
    \begin{align*}
        &\sum_{k=0}^{K-1} B_k^f \leq  81\log^2(1/\gamma)\left(\sigma_f^2\epsilon_f^{-2}+K\sigma_f^2\epsilon_f^{-1} + 4(L_g^f)^2\epsilon_f^{-3}\sum_{k=1}^{K-1} \|x_k - x_{k-1}\|^2 \right) \\
        &=  81\log^2(1/\gamma)\left(\sigma_f^2\epsilon_f^{-2}+K\sigma_f^2\epsilon_f^{-1} + 4(L_g^f)^2\epsilon_f^{-3}\sum_{k=0}^{K-1}\tilde \eta_k^2 \|\tilde s_k\|^2 \right)\\
        &\leq  81\log^2(1/\gamma)\left(\sigma_f^2\epsilon_f^{-2}+K\sigma_f^2\epsilon_f^{-1} + 16(L_g^f)^2\epsilon_f^{-3}\left(\frac{f_0-f_{\rm low}+\tilde \rho_{\rm max} C+\kappa_7 \tilde C}{1-2\tau}\right) \right) \\&= \tilde O(\epsilon^{-3}),
    \end{align*}
    where the second inequality uses \eqref{eq:ave-tildes} and \(\kappa_7 := \max \left\{1,3\tilde \rho_{\rm max}\right\}\). The total number of stochastic gradients and function evaluations  of the constraints can be derived analogously, leading to the oracle complexity of the recursive momentum approach.
\end{proof}

\begin{remark}\label{rm:semi-rm}
    For finding an \(\epsilon\)-KKT point of semi-stochastic problems with high probability, the oracle complexity regarding the gradient and function evaluations of the constraints are in the same order as the iteration complexity, i.e. \(O(\epsilon^{-2})\), while the oracle complexity regarding the  stochastic gradient computation of objective function is in order  \(\tilde O(\epsilon^{-3})\). In terms of the oracle complexity of the stochastic objective gradient, this result matches that of the best existing algorithms with variance reduction \cite{idrees2024constrained,lu2024variance,shi2025momentum}, up to a log factor. However, due to the lower iteration complexity of our algorithm, it achieves superior complexity results for the constraint gradient and function value compared to other methods \cite{idrees2024constrained,lu2024variance,shi2025momentum}.
\end{remark}

\begin{remark}
    Throughout this paper, we require the strong LICQ or strong MFCQ to hold, along with the associated coefficient \(\nu > 0\), which is critical to our analysis. On the one hand, the design of our algorithm relies on the sufficient descent property of the merit function, where the balance between the objective function and constraint violation is achieved through the  adaptive update of the merit parameter. We notice that the upper bound of the merit parameter is related to \(\nu^{-1}\), while the lower bound of the stepsize is related to \(\nu\). If \(\nu\) approaches zero, the constant bounds for the merit parameter and stepsize cannot be established, which would undermine the entire complexity results. On the other hand, the existence and boundedness of Lagrange multipliers heavily depend on the validity of the strong CQ conditions. Without these conditions, we cannot guarantee the existence of multipliers during the iteration process, and the tools of perturbation analysis would no longer be applicable, rendering the complexity analysis of the stochastic algorithm infeasible. Although the applicability of strong CQ conditions in practice remains to be fully validated, most current work on constrained optimization relies on similar CQ conditions \cite{boob2025level,curtis2024worst,jia2025first,jin2022stochastic,li2021rate,sahin2019inexact,shi2025momentum}. How  the algorithms will behave under more relaxed CQ conditions is an interesting topic and will need further exploration.
\end{remark}

\section{Numerical simulation}\label{sec:num-sim}

In this section, we evaluate the proposed algorithms on equality-constrained problems \eqref{p1} and on problems involving inequality constraints \eqref{p} from the CUTEst collection \cite{gould2015cutest}. All experiments are implemented in Julia, and for problems involving inequality constraints, \texttt{Ipopt}  is employed to solve the subproblems in each iteration. We first describe the criteria used to select test problems. To ensure computational tractability and consistency with our theoretical assumptions, we only consider problems satisfying the following conditions:
\begin{enumerate}
    \item[(1)] The total number of variables and constraints does not exceed 1000, i.e., $d + m \le 1000$;
    \item[(2)] Assumption \ref{ass:estimates}(ii) is satisfied during the iterative process;
    \item[(3)] For problems containing inequality constraints, the \texttt{Ipopt}  solver can successfully return a solution at each iteration.
\end{enumerate}
According to these criteria, we obtain 136 equality-constrained problems and 170 problems involving inequality constraints from CUTEst. To assess the performance of our methods, we employ three conditions in the  KKT system as evaluation metrics:
\begin{itemize}
    \item the \textit{stationarity error}, measured by $\|\nabla f(x) + \nabla c(x)\lambda\|$ for problem \eqref{p1} and $\|\nabla f(x) + \nabla c(x)\lambda - \mu\|$ for problem \eqref{p};
    \item the \textit{feasibility error}, measured by $\|c(x)\|$;
    \item the \textit{complementary slackness error} for problem \eqref{p}, measured by $|\mu^\top x|$.
\end{itemize}
All metrics are computed for each problem within \(1000\) iterations and summarized as box plots.

\subsection{The impact of user-specified mapping $A$}
We first test the stochastic version of Algorithm \ref{alg1} on equality-constrained problems. Recall that the search direction in Algorithm \ref{alg1} involves a user-specified mapping $A$. We consider two specific choices with
\begin{equation*}
A(x) = \alpha I_d, \quad \text{and} \quad A(x) = \alpha (\nabla c(x)^\top \nabla c(x))^{-1},
\end{equation*}
which correspond respectively to the linearized ALM and SQP variants as  discussed in Section \ref{ssec:eqc}. For each problem, the scaling parameter $\alpha$ is slightly tuned within a moderate range to achieve stable convergence.
Figure~\ref{fig:A} reports the results in terms of KKT errors. The left subfigure shows the stationarity error, while the right subfigure illustrates the feasibility error. It is observed that the SQP variant with \(A(x) = \alpha (\nabla c(x)^\top \nabla c(x))^{-1}\) generally outperforms the linearized ALM variant in both metrics. This improvement can be attributed to the adaptive scaling of \(A(x)\) with respect to the problem structure, thereby promotes better stationarity and feasibility.
\begin{figure}[h!]
    \centering
    \begin{minipage}{0.48\textwidth}
        \centering
        \includegraphics[width=\textwidth]{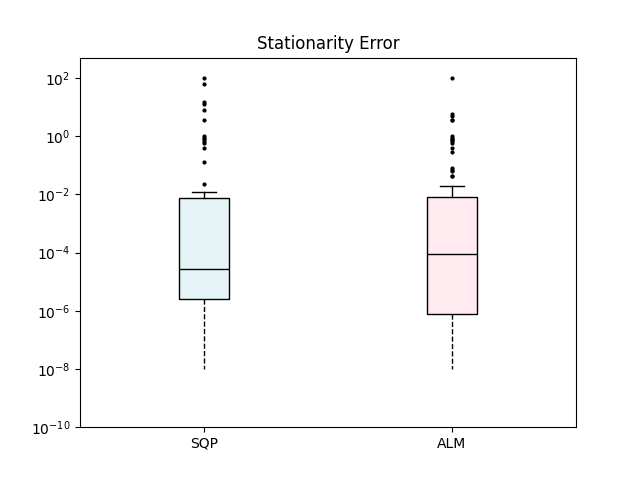}
    \end{minipage}
    \hfill
    \begin{minipage}{0.48\textwidth}
        \centering
        \includegraphics[width=\textwidth]{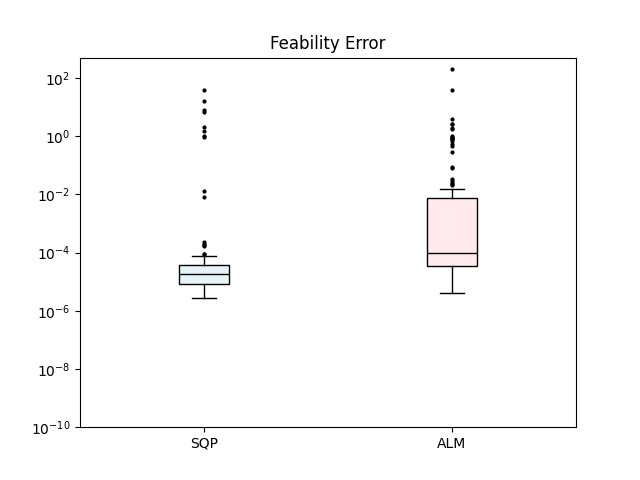}
    \end{minipage}
    \caption{Comparison between linearized ALM and SQP.}
    \label{fig:A}
\end{figure}

\subsection{The impact of variance reduction techniques}

We examine the effectiveness of variance reduction techniques on stochastic equality-constrained problems, with noise levels set to \(\{10^{-8}, 10^{-6}, 10^{-4}, 10^{-2}\}\). Under the framework of Algorithm \ref{alg2} with the SQP variant \(A(x) = \alpha (\nabla c(x)^\top \nabla c(x))^{-1}\), we compare two approaches: (1) the mini-batch approach, which uses an average of two {independent} samples per iteration, and (2) the recursive momentum approach, which employs two {dependent} samples per iteration. 
\begin{figure}[h!]
    \centering
    \begin{minipage}{0.48\textwidth}
        \centering
        \includegraphics[width=\textwidth]{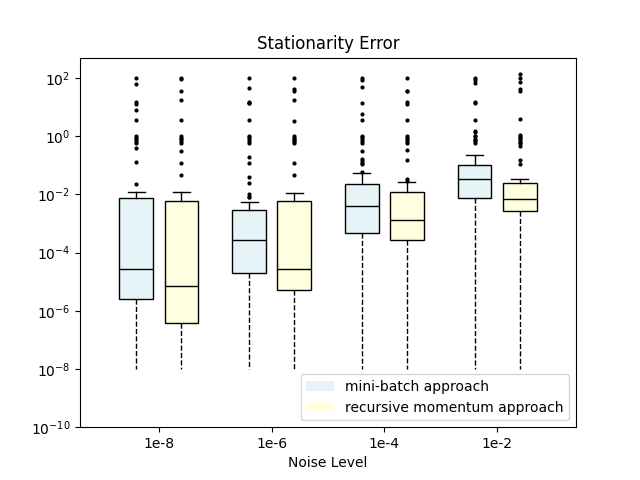}
    \end{minipage}
    \hfill
    \begin{minipage}{0.48\textwidth}
        \centering
        \includegraphics[width=\textwidth]{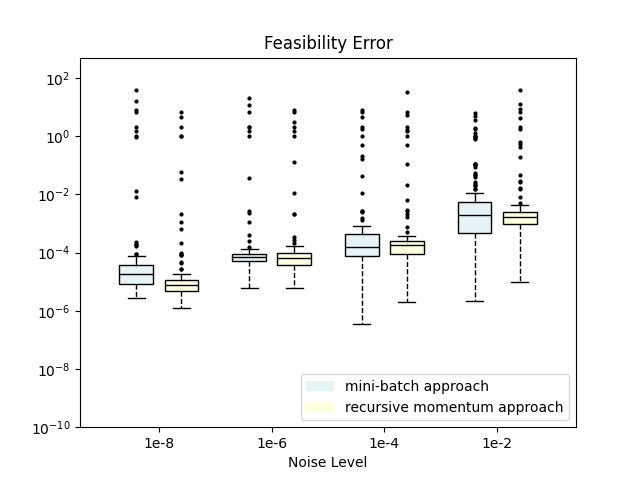}
    \end{minipage}
    \caption{Comparison between Mini-batch and Recursive momentum method.}
    \label{fig:vr}
\end{figure}

For each noise level, we measure the same two metrics as previously (stationarity and feasibility) and present the results as box plots in Figure \ref{fig:vr}. The yellow box plots represent the performance of the recursive momentum approach, which exhibits slightly smaller errors compared to the blue box plots of the mini-batch approach across various noise levels, for both stationarity and feasibility metrics. This indicates that, when the stochastic gradient owns sample-wise smoothness, i.e., when Assumption 8 holds, correcting the current stochastic estimates using historical information can effectively reduce the noise level.

\subsection{Performance for  problems with inequality constraints}

In this subsection, we further evaluate the performance of  the recursive momentum approach on problems containing inequality constraints. Specifically, we select 170 CUTEst problems that include at least one inequality constraint with the total number of variables and constraints does not exceed 200.
To handle the inequality constraints of the form
\[
l_i \leq c_i(x) \leq u_i,
\]
we introduce two nonnegative slack variables $s_i^-$ and $s_i^+$ and convert each inequality into a pair of equality constraints:
\[
c_i(x) - s_i^- - l_i = 0, \qquad c_i(x) + s_i^+ - u_i = 0,
\]
where both $s_i^-$ and $s_i^+$ are required to satisfy $s_i^-\ge0, s_i^+ \ge 0$. This transformation allows us to reformulate the original problem into an equality-constrained system with bound constraints on the slack variables.

During each iteration, the descent direction is computed by solving the subproblem~(\ref{opt4skine-w-s}) using the \texttt{Ipopt} solver, which ensures efficient handling of the resulting nonlinear equality and bound constraints. The recursive momentum mechanism is applied to reduce the stochastic variance in gradient and constraint evaluations, providing smoother convergence in practice.

\begin{figure}[h!]
    \centering
    \begin{minipage}{0.48\textwidth}
        \centering
        \includegraphics[width=\textwidth]{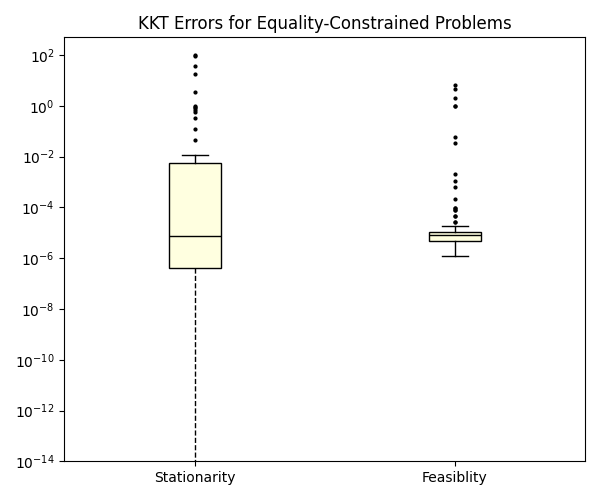}
    \end{minipage}
    \hfill
    \begin{minipage}{0.48\textwidth}
        \centering
        \includegraphics[width=\textwidth]{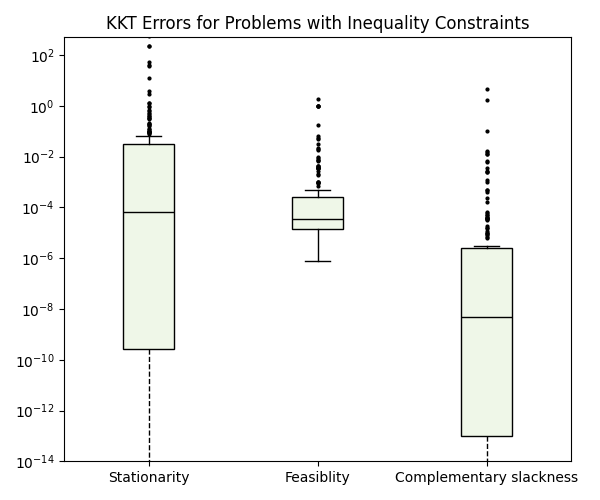}
    \end{minipage}
    \caption{Performance of the recursive momentum approach  on CUTEst problems.}
    \label{fig:ineq_boxplots}
\end{figure}

The experimental results are summarized in the right subfigure of Figure~\ref{fig:ineq_boxplots}, which presents the box plots of the stationarity, feasibility and complementary slackness measures across all test problems. Overall, Algorithm \ref{alg2} shows consistent and stable performance on inequality-constrained problems, maintaining comparable stationarity and feasibility accuracies to the equality-constrained case, see the left subfigure. The full results are presented in the following tables. 

\begin{center}
\scriptsize
\begin{longtable}{lcccc}
\caption{Performance of Algorithm \ref{alg2} on 136 Equality-Constrained Problems} \\
\toprule
Problem & Dim. & Constraints & Stationarity Error & Feasibility Error \\
\midrule
\endfirsthead
\toprule
Problem & Dim. & Constraints & Stationarity Error & Feasibility Error \\
\midrule
\endhead
\bottomrule
\endfoot
AIRCRFTA & 8 & 5 & $0$ & $5.13\mathrm{e}{-6}$\\
ALLINITC & 4 & 1 & $6.80\mathrm{e}{-6}$ & $1.15\mathrm{e}{-5}$\\
ALSOTAME & 2 & 1 & $8.51\mathrm{e}{-3}$ & $1.05\mathrm{e}{-5}$\\
ARGTRIG & 200 & 200 & $0$ & $1.23\mathrm{e}{-6}$\\
BA-L1 & 57 & 12 & $0$ & $3.31\mathrm{e}{-6}$\\
BA-L1SP & 57 & 12 & $0$ & $3.31\mathrm{e}{-6}$\\
BOOTH & 2 & 2 & $0$ & $8.12\mathrm{e}{-6}$\\
BROWNALE & 200 & 200 & $0$ & $1.24\mathrm{e}{-6}$\\
BT1 & 2 & 1 & $4.25\mathrm{e}{-7}$ & $1.23\mathrm{e}{-5}$\\
BT10 & 2 & 2 & $0$ & $7.68\mathrm{e}{-6}$\\
BT11 & 5 & 3 & $6.71\mathrm{e}{-6}$ & $8.13\mathrm{e}{-6}$\\
BT12 & 5 & 3 & $6.84\mathrm{e}{-8}$ & $5.74\mathrm{e}{-6}$\\
BT3 & 5 & 3 & $2.16\mathrm{e}{-6}$ & $6.84\mathrm{e}{-6}$\\
BT4 & 3 & 2 & $7.21\mathrm{e}{-8}$ & $1.49\mathrm{e}{-5}$\\
BT5 & 3 & 2 & $5.14\mathrm{e}{-9}$ & $6.50\mathrm{e}{-6}$\\
BT6 & 5 & 2 & $9.29\mathrm{e}{-6}$ & $1.78\mathrm{e}{-5}$\\
BT8 & 5 & 2 & $4.89\mathrm{e}{-5}$ & $7.47\mathrm{e}{-6}$\\
BT9 & 4 & 2 & $1.30\mathrm{e}{-7}$ & $2.89\mathrm{e}{-6}$\\
BYRDSPHR & 3 & 2 & $1.29\mathrm{e}{-8}$ & $3.85\mathrm{e}{-6}$\\
CLUSTER & 2 & 2 & $0$ & $8.12\mathrm{e}{-6}$\\
COOLHANS & 9 & 9 & $0$ & $3.41\mathrm{e}{-2}$\\
CUBENE & 2 & 2 & $0$ & $8.12\mathrm{e}{-6}$\\
DALLASM & 196 & 151 & $8.84\mathrm{e}{-1}$ & $2.73\mathrm{e}{-5}$\\
DALLASS & 46 & 31 & $7.00\mathrm{e}{-1}$ & $1.73\mathrm{e}{-5}$\\
DECONVBNE & 63 & 40 & $0$ & $6.80\mathrm{e}{-6}$\\
DECONVNE & 63 & 40 & $0$ & $6.80\mathrm{e}{-6}$\\
DENSCHNCNE & 2 & 2 & $0$ & $8.12\mathrm{e}{-6}$\\
DENSCHNDNE & 3 & 3 & $0$ & $7.53\mathrm{e}{-6}$\\
DENSCHNENE & 3 & 3 & $0$ & $6.63\mathrm{e}{-6}$\\
DENSCHNFNE & 2 & 2 & $0$ & $8.12\mathrm{e}{-6}$\\
DIXCHLNG & 10 & 5 & $1.72\mathrm{e}{1}$ & $1.00\mathrm{e}{0}$\\
EIGMAXA & 101 & 101 & $3.37\mathrm{e}{-16}$ & $2.23\mathrm{e}{-6}$\\
EIGMAXB & 101 & 101 & $2.50\mathrm{e}{-16}$ & $2.02\mathrm{e}{-6}$\\
EIGMAXC & 202 & 202 & $4.43\mathrm{e}{-16}$ & $8.21\mathrm{e}{-5}$\\
EIGMINA & 101 & 101 & $3.50\mathrm{e}{-16}$ & $2.32\mathrm{e}{-6}$\\
EIGMINB & 101 & 101 & $2.78\mathrm{e}{-16}$ & $2.06\mathrm{e}{-6}$\\
EIGMINC & 202 & 202 & $6.95\mathrm{e}{-16}$ & $8.21\mathrm{e}{-5}$\\
EXTRASIM & 2 & 1 & $8.00\mathrm{e}{-1}$ & $2.68\mathrm{e}{-5}$\\
FCCU & 19 & 8 & $1.36\mathrm{e}{-5}$ & $7.39\mathrm{e}{-6}$\\
GENHS28 & 10 & 8 & $2.37\mathrm{e}{-6}$ & $5.43\mathrm{e}{-6}$\\
GOTTFR & 2 & 2 & $0$ & $8.12\mathrm{e}{-6}$\\
GOULDQP1 & 32 & 17 & $3.65\mathrm{e}{1}$ & $2.14\mathrm{e}{-4}$\\
HATFLDANE & 4 & 4 & $0$ & $5.74\mathrm{e}{-6}$\\
HATFLDBNE & 4 & 4 & $0$ & $5.74\mathrm{e}{-6}$\\
HATFLDCNE & 25 & 25 & $0$ & $3.67\mathrm{e}{-6}$\\
HATFLDF & 3 & 3 & $0$ & $6.63\mathrm{e}{-6}$\\
HATFLDG & 25 & 25 & $0$ & $3.67\mathrm{e}{-6}$\\
HELIXNE & 3 & 3 & $0$ & $6.63\mathrm{e}{-6}$\\
HIMMELBA & 2 & 2 & $0$ & $8.12\mathrm{e}{-6}$\\
HIMMELBC & 2 & 2 & $0$ & $8.12\mathrm{e}{-6}$\\
HIMMELBD & 2 & 2 & $0$ & $6.60\mathrm{e}{0}$\\
HIMMELBE & 3 & 3 & $0$ & $6.63\mathrm{e}{-6}$\\
HS111 & 10 & 3 & $6.29\mathrm{e}{-3}$ & $5.83\mathrm{e}{-6}$\\
HS111LNP & 10 & 3 & $6.29\mathrm{e}{-3}$ & $5.83\mathrm{e}{-6}$\\
HS119 & 16 & 8 & $9.14\mathrm{e}{-7}$ & $5.78\mathrm{e}{-6}$\\
HS1NE & 2 & 2 & $0$ & $8.12\mathrm{e}{-6}$\\
HS26 & 3 & 1 & $7.73\mathrm{e}{-6}$ & $6.74\mathrm{e}{-6}$\\
HS27 & 3 & 1 & $4.46\mathrm{e}{-7}$ & $3.66\mathrm{e}{-6}$\\
HS28 & 3 & 1 & $3.81\mathrm{e}{-6}$ & $9.43\mathrm{e}{-6}$\\
HS2NE & 2 & 2 & $0$ & $8.12\mathrm{e}{-6}$\\
HS39 & 4 & 2 & $1.30\mathrm{e}{-7}$ & $2.89\mathrm{e}{-6}$\\
HS40 & 4 & 3 & $3.35\mathrm{e}{-9}$ & $4.69\mathrm{e}{-6}$\\
HS42 & 4 & 2 & $2.19\mathrm{e}{-6}$ & $1.28\mathrm{e}{-5}$\\
HS46 & 5 & 2 & $2.08\mathrm{e}{-5}$ & $8.68\mathrm{e}{-6}$\\
HS47 & 5 & 3 & $2.93\mathrm{e}{-6}$ & $6.35\mathrm{e}{-6}$\\
HS48 & 5 & 2 & $5.33\mathrm{e}{-6}$ & $9.20\mathrm{e}{-6}$\\
HS49 & 5 & 2 & $3.30\mathrm{e}{-3}$ & $7.66\mathrm{e}{-6}$\\
HS50 & 5 & 3 & $1.79\mathrm{e}{-6}$ & $7.03\mathrm{e}{-6}$\\
HS51 & 5 & 3 & $1.19\mathrm{e}{-5}$ & $4.62\mathrm{e}{-6}$\\
HS52 & 5 & 3 & $3.73\mathrm{e}{-3}$ & $5.28\mathrm{e}{-6}$\\
HS53 & 5 & 3 & $3.52\mathrm{e}{-6}$ & $7.37\mathrm{e}{-6}$\\
HS54 & 6 & 1 & $1.67\mathrm{e}{-31}$ & $1.73\mathrm{e}{-5}$\\
HS6 & 2 & 1 & $1.27\mathrm{e}{-8}$ & $3.43\mathrm{e}{-6}$\\
HS60 & 3 & 1 & $9.48\mathrm{e}{-7}$ & $1.28\mathrm{e}{-5}$\\
HS61 & 3 & 2 & $7.75\mathrm{e}{-6}$ & $1.64\mathrm{e}{-5}$\\
HS62 & 3 & 1 & $3.20\mathrm{e}{-1}$ & $4.51\mathrm{e}{-5}$\\
HS63 & 3 & 2 & $5.14\mathrm{e}{-9}$ & $6.50\mathrm{e}{-6}$\\
HS68 & 4 & 2 & $1.37\mathrm{e}{-5}$ & $1.12\mathrm{e}{-5}$\\
HS69 & 4 & 2 & $8.69\mathrm{e}{-4}$ & $1.70\mathrm{e}{-5}$\\
HS7 & 2 & 1 & $4.60\mathrm{e}{-6}$ & $2.56\mathrm{e}{-6}$\\
HS77 & 5 & 2 & $2.46\mathrm{e}{-6}$ & $8.93\mathrm{e}{-6}$\\
HS78 & 5 & 3 & $3.13\mathrm{e}{-6}$ & $1.57\mathrm{e}{-5}$\\
HS79 & 5 & 3 & $3.85\mathrm{e}{-7}$ & $1.06\mathrm{e}{-5}$\\
HS8 & 2 & 2 & $0$ & $8.12\mathrm{e}{-6}$\\
HS80 & 5 & 3 & $1.31\mathrm{e}{-6}$ & $9.75\mathrm{e}{-6}$\\
HS81 & 5 & 3 & $1.31\mathrm{e}{-6}$ & $9.75\mathrm{e}{-6}$\\
HS9 & 2 & 1 & $9.31\mathrm{e}{-3}$ & $1.04\mathrm{e}{-5}$\\
HS99 & 7 & 2 & $9.09\mathrm{e}{1}$ & $4.73\mathrm{e}{0}$\\
HYDCAR20 & 99 & 99 & $0$ & $2.12\mathrm{e}{-3}$\\
HYDCAR6 & 29 & 29 & $0$ & $3.36\mathrm{e}{-6}$\\
HYPCIR & 2 & 2 & $0$ & $8.12\mathrm{e}{-6}$\\
INTEQNE & 12 & 12 & $0$ & $3.31\mathrm{e}{-6}$\\
LEAKNET & 156 & 153 & $4.35\mathrm{e}{-3}$ & $9.85\mathrm{e}{-5}$\\
LINSPANH & 97 & 33 & $5.98\mathrm{e}{-1}$ & $5.76\mathrm{e}{-6}$\\
MARATOS & 2 & 1 & $1.39\mathrm{e}{-9}$ & $1.24\mathrm{e}{-5}$\\
METHANB8 & 31 & 31 & $0$ & $3.24\mathrm{e}{-6}$\\
METHANL8 & 31 & 31 & $0$ & $3.24\mathrm{e}{-6}$\\
MOREBVNE & 10 & 10 & $0$ & $3.63\mathrm{e}{-6}$\\
MSS1 & 90 & 73 & $4.64\mathrm{e}{-2}$ & $6.38\mathrm{e}{-4}$\\
OPTCNTRL & 32 & 20 & $2.15\mathrm{e}{-5}$ & $4.14\mathrm{e}{-6}$\\
ORTHREGB & 27 & 6 & $9.30\mathrm{e}{-5}$ & $7.29\mathrm{e}{-6}$\\
PORTFL1 & 12 & 1 & $1.87\mathrm{e}{-3}$ & $1.12\mathrm{e}{-5}$\\
PORTFL2 & 12 & 1 & $2.49\mathrm{e}{-3}$ & $1.10\mathrm{e}{-5}$\\
PORTFL3 & 12 & 1 & $3.11\mathrm{e}{-3}$ & $1.06\mathrm{e}{-5}$\\
PORTFL4 & 12 & 1 & $3.12\mathrm{e}{-3}$ & $1.07\mathrm{e}{-5}$\\
PORTFL6 & 12 & 1 & $2.57\mathrm{e}{-3}$ & $1.09\mathrm{e}{-5}$\\
PORTSNQP & 10 & 2 & $4.47\mathrm{e}{4}$ & $6.09\mathrm{e}{-2}$\\
PRICE3NE & 2 & 2 & $0$ & $8.05\mathrm{e}{-6}$\\
PRICE4NE & 2 & 2 & $0$ & $8.38\mathrm{e}{-6}$\\
QINGNE & 100 & 100 & $0$ & $1.84\mathrm{e}{-6}$\\
RK23 & 17 & 11 & $1.00\mathrm{e}{0}$ & $4.58\mathrm{e}{-5}$\\
ROBOT & 14 & 2 & $9.63\mathrm{e}{-6}$ & $1.40\mathrm{e}{-5}$\\
RSNBRNE & 2 & 2 & $0$ & $8.12\mathrm{e}{-6}$\\
S316-322 & 2 & 1 & $6.85\mathrm{e}{-8}$ & $9.35\mathrm{e}{-6}$\\
SINVALNE & 2 & 2 & $0$ & $8.12\mathrm{e}{-6}$\\
SPANHYD & 97 & 33 & $9.65\mathrm{e}{-6}$ & $3.24\mathrm{e}{-6}$\\
SPIN2 & 102 & 100 & $0$ & $1.84\mathrm{e}{-6}$\\
SPIN2OP & 102 & 100 & $1.54\mathrm{e}{-9}$ & $2.41\mathrm{e}{-6}$\\
STREGNE & 4 & 2 & $1.00\mathrm{e}{2}$ & $1.13\mathrm{e}{-3}$\\
STRTCHDVNE & 10 & 9 & $0$ & $1.74\mathrm{e}{-6}$\\
SUPERSIM & 2 & 2 & $2.22\mathrm{e}{-16}$ & $8.12\mathrm{e}{-6}$\\
TAME & 2 & 1 & $4.91\mathrm{e}{-7}$ & $7.16\mathrm{e}{-6}$\\
TARGUS & 162 & 63 & $1.24\mathrm{e}{-6}$ & $2.78\mathrm{e}{-6}$\\
TENBARS3 & 18 & 8 & $3.56\mathrm{e}{0}$ & $1.01\mathrm{e}{0}$\\
TRIGGER & 7 & 6 & $0$ & $4.69\mathrm{e}{-6}$\\
TRIGON1NE & 10 & 10 & $0$ & $3.63\mathrm{e}{-6}$\\
TRY-B & 2 & 1 & $1.97\mathrm{e}{-8}$ & $4.54\mathrm{e}{-6}$\\
VANDANIUMS & 22 & 10 & $0$ & $2.00\mathrm{e}{0}$\\
WACHBIEG & 3 & 2 & $1.22\mathrm{e}{-1}$ & $9.17\mathrm{e}{-5}$\\
WAYSEA1NE & 2 & 2 & $0$ & $8.12\mathrm{e}{-6}$\\
WAYSEA2NE & 2 & 2 & $0$ & $8.12\mathrm{e}{-6}$\\
ZAMB2-10 & 270 & 96 & $1.16\mathrm{e}{-2}$ & $1.92\mathrm{e}{-6}$\\
ZAMB2-11 & 270 & 96 & $8.99\mathrm{e}{-3}$ & $1.94\mathrm{e}{-6}$\\
ZAMB2-8 & 138 & 48 & $7.67\mathrm{e}{-3}$ & $2.46\mathrm{e}{-6}$\\
ZAMB2-9 & 138 & 48 & $1.15\mathrm{e}{-2}$ & $2.40\mathrm{e}{-6}$\\
ZANGWIL3 & 3 & 3 & $0$ & $6.63\mathrm{e}{-6}$\\
\bottomrule
\end{longtable}
\end{center}

\begin{center}
\scriptsize
\begin{longtable}{lccccc}
\caption{Performance of Algorithm \ref{alg2} on 170 Problems with Inequality Constraints} \\
\toprule
Problem & Dim. & Constraints & Stationarity Error & Feasibility Error & Comp. Slackness \\
\midrule
\endfirsthead
\toprule
Problem & Dim. & Constraints & Stationarity Error & Feasibility Error & Comp. Slackness \\
\midrule
\endhead
\bottomrule
\endfoot
ACOPP14 & 78 & 68 & $3.54\mathrm{e}{-2}$ & $2.10\mathrm{e}{-4}$ & $1.44\mathrm{e}{-7}$ \\
ACOPR14 & 106 & 96 & $1.12\mathrm{e}{-7}$ & $9.54\mathrm{e}{-5}$ & $1.29\mathrm{e}{-7}$ \\
AIRPORT & 126 & 42 & $2.48\mathrm{e}{-2}$ & $9.74\mathrm{e}{-1}$ & $5.87\mathrm{e}{-3}$ \\
ALLINITA & 6 & 4 & $1.29\mathrm{e}{-9}$ & $5.99\mathrm{e}{-6}$ & $4.40\mathrm{e}{-9}$ \\
ANTWERP & 29 & 10 & $8.62\mathrm{e}{-6}$ & $1.90\mathrm{e}{-4}$ & $5.75\mathrm{e}{-12}$ \\
AVGASA & 18 & 10 & $5.86\mathrm{e}{-9}$ & $6.06\mathrm{e}{-5}$ & $3.59\mathrm{e}{-9}$ \\
AVGASB & 18 & 10 & $4.86\mathrm{e}{-10}$ & $2.95\mathrm{e}{-5}$ & $1.65\mathrm{e}{-8}$ \\
BATCH & 109 & 73 & $4.67\mathrm{e}{-3}$ & $9.40\mathrm{e}{-5}$ & $9.07\mathrm{e}{-8}$ \\
BIGGSC4 & 17 & 13 & $8.09\mathrm{e}{-11}$ & $4.57\mathrm{e}{-5}$ & $3.06\mathrm{e}{-8}$ \\
BURKEHAN & 2 & 1 & $8.71\mathrm{e}{-12}$ & $1.00\mathrm{e}{0}$ & $2.95\mathrm{e}{-6}$ \\
CANTILVR & 6 & 1 & $6.66\mathrm{e}{-11}$ & $1.41\mathrm{e}{-4}$ & $2.96\mathrm{e}{-17}$ \\
CB2 & 6 & 3 & $1.39\mathrm{e}{-9}$ & $3.33\mathrm{e}{-5}$ & $3.10\mathrm{e}{-10}$ \\
CB3 & 6 & 3 & $1.04\mathrm{e}{-16}$ & $2.40\mathrm{e}{-5}$ & $7.69\mathrm{e}{-18}$ \\
CHACONN1 & 6 & 3 & $1.28\mathrm{e}{-9}$ & $2.39\mathrm{e}{-5}$ & $3.28\mathrm{e}{-10}$ \\
CHACONN2 & 6 & 3 & $1.04\mathrm{e}{-16}$ & $3.49\mathrm{e}{-5}$ & $7.69\mathrm{e}{-18}$ \\
CONGIGMZ & 8 & 5 & $1.28\mathrm{e}{-4}$ & $3.04\mathrm{e}{-4}$ & $1.78\mathrm{e}{-5}$ \\
CORE1 & 83 & 59 & $2.10\mathrm{e}{-4}$ & $8.69\mathrm{e}{-3}$ & $1.57\mathrm{e}{-4}$ \\
CSFI1 & 8 & 5 & $3.09\mathrm{e}{-1}$ & $1.69\mathrm{e}{-1}$ & $2.47\mathrm{e}{-3}$ \\
CSFI2 & 8 & 5 & $9.06\mathrm{e}{-1}$ & $9.88\mathrm{e}{-1}$ & $6.97\mathrm{e}{-7}$ \\
DEGENQP & 250 & 245 & $3.97\mathrm{e}{-2}$ & $1.14\mathrm{e}{-6}$ & $1.25\mathrm{e}{-2}$ \\
DEMBO7 & 37 & 21 & $3.21\mathrm{e}{-2}$ & $5.05\mathrm{e}{-2}$ & $1.56\mathrm{e}{-5}$ \\
DEMYMALO & 6 & 3 & $6.46\mathrm{e}{-17}$ & $2.84\mathrm{e}{-5}$ & $1.22\mathrm{e}{-18}$ \\
DIPIGRI & 11 & 4 & $9.50\mathrm{e}{-2}$ & $1.51\mathrm{e}{-5}$ & $4.65\mathrm{e}{-4}$ \\
DISC2 & 35 & 23 & $1.94\mathrm{e}{-2}$ & $9.75\mathrm{e}{-4}$ & $9.34\mathrm{e}{-14}$ \\
DISCS & 84 & 66 & $5.10\mathrm{e}{-1}$ & $9.99\mathrm{e}{-4}$ & $2.28\mathrm{e}{-4}$ \\
EQC & 12 & 3 & $9.79\mathrm{e}{-10}$ & $8.46\mathrm{e}{-6}$ & $3.31\mathrm{e}{-9}$ \\
ERRINBAR & 19 & 9 & $3.92\mathrm{e}{1}$ & $6.26\mathrm{e}{-2}$ & $1.37\mathrm{e}{-10}$ \\
EXPFITA & 27 & 22 & $2.27\mathrm{e}{-3}$ & $4.36\mathrm{e}{-5}$ & $1.85\mathrm{e}{-6}$ \\
EXPFITB & 107 & 102 & $3.38\mathrm{e}{-3}$ & $7.18\mathrm{e}{-5}$ & $6.66\mathrm{e}{-6}$ \\
FLETCHER & 7 & 4 & $3.05\mathrm{e}{-9}$ & $2.65\mathrm{e}{-5}$ & $2.22\mathrm{e}{-10}$ \\
GIGOMEZ1 & 6 & 3 & $6.46\mathrm{e}{-17}$ & $1.87\mathrm{e}{-5}$ & $1.22\mathrm{e}{-18}$ \\
GIGOMEZ2 & 6 & 3 & $1.39\mathrm{e}{-9}$ & $3.33\mathrm{e}{-5}$ & $3.10\mathrm{e}{-10}$ \\
GIGOMEZ3 & 6 & 3 & $1.04\mathrm{e}{-16}$ & $9.10\mathrm{e}{-4}$ & $6.76\mathrm{e}{-18}$ \\
GOFFIN & 101 & 50 & $9.90\mathrm{e}{-3}$ & $3.22\mathrm{e}{-6}$ & $1.40\mathrm{e}{-6}$ \\
HAIFAS & 22 & 9 & $9.76\mathrm{e}{-9}$ & $4.86\mathrm{e}{-5}$ & $1.45\mathrm{e}{-9}$ \\
HALDMADS & 48 & 42 & $1.09\mathrm{e}{-12}$ & $6.66\mathrm{e}{-5}$ & $2.36\mathrm{e}{-10}$ \\
HATFLDH & 17 & 13 & $5.69\mathrm{e}{-10}$ & $7.90\mathrm{e}{-5}$ & $3.17\mathrm{e}{-8}$ \\
HIMMELBI & 112 & 12 & $5.70\mathrm{e}{-3}$ & $7.50\mathrm{e}{-6}$ & $2.03\mathrm{e}{-6}$ \\
HIMMELP2 & 3 & 1 & $4.96\mathrm{e}{-5}$ & $1.40\mathrm{e}{-5}$ & $4.85\mathrm{e}{-9}$ \\
HIMMELP3 & 4 & 2 & $8.21\mathrm{e}{-5}$ & $1.52\mathrm{e}{-5}$ & $2.37\mathrm{e}{-9}$ \\
HIMMELP4 & 5 & 3 & $8.58\mathrm{e}{-5}$ & $6.27\mathrm{e}{-6}$ & $3.93\mathrm{e}{-9}$ \\
HIMMELP5 & 5 & 3 & $1.55\mathrm{e}{-2}$ & $3.12\mathrm{e}{-5}$ & $4.16\mathrm{e}{-7}$ \\
HIMMELP6 & 7 & 5 & $1.26\mathrm{e}{-2}$ & $4.70\mathrm{e}{-5}$ & $5.75\mathrm{e}{-7}$ \\
HS10 & 3 & 1 & $1.05\mathrm{e}{-17}$ & $1.55\mathrm{e}{-5}$ & $5.80\mathrm{e}{-18}$ \\
HS100 & 11 & 4 & $9.50\mathrm{e}{-2}$ & $2.38\mathrm{e}{-5}$ & $4.65\mathrm{e}{-4}$ \\
HS100MOD & 11 & 4 & $9.35\mathrm{e}{-2}$ & $2.17\mathrm{e}{-5}$ & $6.21\mathrm{e}{-6}$ \\
HS104 & 14 & 6 & $5.08\mathrm{e}{-11}$ & $2.56\mathrm{e}{-4}$ & $3.82\mathrm{e}{-9}$ \\
HS105 & 9 & 1 & $4.80\mathrm{e}{-2}$ & $8.82\mathrm{e}{-6}$ & $1.29\mathrm{e}{-13}$ \\
HS106 & 14 & 6 & $3.00\mathrm{e}{0}$ & $1.37\mathrm{e}{-5}$ & $6.43\mathrm{e}{-7}$ \\
HS108 & 22 & 13 & $5.77\mathrm{e}{-9}$ & $1.19\mathrm{e}{-4}$ & $2.06\mathrm{e}{-8}$ \\
HS11 & 3 & 1 & $3.80\mathrm{e}{-13}$ & $2.08\mathrm{e}{-5}$ & $7.83\mathrm{e}{-16}$ \\
HS113 & 18 & 8 & $1.66\mathrm{e}{-2}$ & $3.24\mathrm{e}{-5}$ & $1.02\mathrm{e}{-5}$ \\
HS114 & 18 & 11 & $7.68\mathrm{e}{-3}$ & $3.44\mathrm{e}{-4}$ & $5.53\mathrm{e}{-8}$ \\
HS116 & 28 & 15 & $3.38\mathrm{e}{-1}$ & $4.10\mathrm{e}{-3}$ & $1.63\mathrm{e}{-2}$ \\
HS117 & 20 & 5 & $2.70\mathrm{e}{3}$ & $1.80\mathrm{e}{-2}$ & $1.65\mathrm{e}{0}$ \\
HS118 & 44 & 29 & $6.11\mathrm{e}{-1}$ & $9.33\mathrm{e}{-5}$ & $7.05\mathrm{e}{-8}$ \\
HS12 & 3 & 1 & $5.76\mathrm{e}{-12}$ & $2.29\mathrm{e}{-6}$ & $5.61\mathrm{e}{-18}$ \\
HS13 & 3 & 1 & $2.79\mathrm{e}{-9}$ & $1.77\mathrm{e}{-5}$ & $8.88\mathrm{e}{-18}$ \\
HS14 & 3 & 2 & $7.90\mathrm{e}{-19}$ & $2.83\mathrm{e}{-4}$ & $2.54\mathrm{e}{-16}$ \\
HS15 & 4 & 2 & $6.16\mathrm{e}{-8}$ & $1.19\mathrm{e}{-5}$ & $2.98\mathrm{e}{-10}$ \\
HS16 & 4 & 2 & $2.33\mathrm{e}{-4}$ & $1.99\mathrm{e}{-5}$ & $3.84\mathrm{e}{-5}$ \\
HS17 & 4 & 2 & $4.69\mathrm{e}{-4}$ & $8.91\mathrm{e}{-4}$ & $5.22\mathrm{e}{-5}$ \\
HS18 & 4 & 2 & $3.88\mathrm{e}{-3}$ & $9.69\mathrm{e}{-6}$ & $2.63\mathrm{e}{-11}$ \\
HS19 & 4 & 2 & $2.35\mathrm{e}{-19}$ & $6.24\mathrm{e}{-5}$ & $7.47\mathrm{e}{-12}$ \\
HS20 & 5 & 3 & $1.13\mathrm{e}{-4}$ & $2.43\mathrm{e}{-5}$ & $1.20\mathrm{e}{-5}$ \\
HS21 & 3 & 1 & $1.41\mathrm{e}{-12}$ & $1.42\mathrm{e}{-5}$ & $1.49\mathrm{e}{-17}$ \\
HS21MOD & 8 & 1 & $2.36\mathrm{e}{-11}$ & $1.43\mathrm{e}{-5}$ & $1.53\mathrm{e}{-17}$ \\
HS22 & 4 & 2 & $9.06\mathrm{e}{-18}$ & $5.41\mathrm{e}{-5}$ & $3.43\mathrm{e}{-17}$ \\
HS23 & 7 & 5 & $2.78\mathrm{e}{-9}$ & $1.96\mathrm{e}{-4}$ & $8.17\mathrm{e}{-6}$ \\
HS24 & 5 & 3 & $3.33\mathrm{e}{-4}$ & $4.18\mathrm{e}{-6}$ & $1.51\mathrm{e}{-5}$ \\
HS268 & 10 & 5 & $1.70\mathrm{e}{-2}$ & $1.55\mathrm{e}{-5}$ & $4.20\mathrm{e}{-5}$ \\
HS29 & 4 & 1 & $8.03\mathrm{e}{-2}$ & $9.66\mathrm{e}{-4}$ & $4.88\mathrm{e}{-5}$ \\
HS30 & 4 & 1 & $2.32\mathrm{e}{-7}$ & $7.42\mathrm{e}{-7}$ & $5.56\mathrm{e}{-17}$ \\
HS31 & 4 & 1 & $2.52\mathrm{e}{-9}$ & $1.29\mathrm{e}{-5}$ & $3.31\mathrm{e}{-15}$ \\
HS32 & 4 & 2 & $4.63\mathrm{e}{-10}$ & $7.97\mathrm{e}{-6}$ & $1.39\mathrm{e}{-9}$ \\
HS34 & 5 & 2 & $4.51\mathrm{e}{-3}$ & $1.58\mathrm{e}{-5}$ & $1.63\mathrm{e}{-17}$ \\
HS35 & 4 & 1 & $6.23\mathrm{e}{-13}$ & $1.24\mathrm{e}{-5}$ & $4.33\mathrm{e}{-17}$ \\
HS35I & 4 & 1 & $6.23\mathrm{e}{-13}$ & $1.24\mathrm{e}{-5}$ & $4.33\mathrm{e}{-17}$ \\
HS35MOD & 4 & 1 & $6.23\mathrm{e}{-13}$ & $1.24\mathrm{e}{-5}$ & $4.33\mathrm{e}{-17}$ \\
HS36 & 4 & 1 & $1.34\mathrm{e}{-8}$ & $5.34\mathrm{e}{-6}$ & $2.05\mathrm{e}{-12}$ \\
HS37 & 5 & 2 & $7.68\mathrm{e}{-9}$ & $2.17\mathrm{e}{-5}$ & $4.42\mathrm{e}{-8}$ \\
HS43 & 7 & 3 & $2.40\mathrm{e}{-9}$ & $1.13\mathrm{e}{-5}$ & $1.81\mathrm{e}{-9}$ \\
HS44 & 10 & 6 & $1.04\mathrm{e}{-1}$ & $3.98\mathrm{e}{-6}$ & $9.92\mathrm{e}{-4}$ \\
HS44NEW & 10 & 6 & $9.87\mathrm{e}{-2}$ & $1.60\mathrm{e}{-5}$ & $2.80\mathrm{e}{-3}$ \\
HS57 & 3 & 1 & $1.59\mathrm{e}{-10}$ & $8.64\mathrm{e}{-6}$ & $3.89\mathrm{e}{-10}$ \\
HS59 & 5 & 3 & $5.81\mathrm{e}{-4}$ & $9.29\mathrm{e}{-3}$ & $3.44\mathrm{e}{-9}$ \\
HS64 & 4 & 1 & $1.47\mathrm{e}{-3}$ & $1.19\mathrm{e}{-5}$ & $2.11\mathrm{e}{-14}$ \\
HS65 & 4 & 1 & $1.75\mathrm{e}{-12}$ & $1.21\mathrm{e}{-5}$ & $5.49\mathrm{e}{-18}$ \\
HS66 & 5 & 2 & $6.01\mathrm{e}{-17}$ & $3.43\mathrm{e}{-5}$ & $1.74\mathrm{e}{-17}$ \\
HS67 & 17 & 14 & $2.63\mathrm{e}{-2}$ & $1.33\mathrm{e}{-4}$ & $2.58\mathrm{e}{-8}$ \\
HS70 & 5 & 1 & $1.53\mathrm{e}{-3}$ & $1.36\mathrm{e}{-5}$ & $2.40\mathrm{e}{-6}$ \\
HS71 & 5 & 2 & $7.98\mathrm{e}{-14}$ & $7.81\mathrm{e}{-6}$ & $1.02\mathrm{e}{-17}$ \\
HS72 & 6 & 2 & $4.65\mathrm{e}{-5}$ & $5.55\mathrm{e}{-2}$ & $3.30\mathrm{e}{-8}$ \\
HS73 & 6 & 3 & $1.90\mathrm{e}{-1}$ & $6.90\mathrm{e}{-6}$ & $3.34\mathrm{e}{-5}$ \\
HS74 & 6 & 5 & $1.33\mathrm{e}{0}$ & $9.35\mathrm{e}{-5}$ & $9.13\mathrm{e}{-6}$ \\
HS75 & 6 & 5 & $1.24\mathrm{e}{-1}$ & $6.54\mathrm{e}{-3}$ & $1.02\mathrm{e}{-1}$ \\
HS76 & 7 & 3 & $2.64\mathrm{e}{-10}$ & $1.15\mathrm{e}{-5}$ & $2.12\mathrm{e}{-9}$ \\
HS76I & 7 & 3 & $2.64\mathrm{e}{-10}$ & $1.15\mathrm{e}{-5}$ & $2.12\mathrm{e}{-9}$ \\
HS83 & 11 & 6 & $5.82\mathrm{e}{2}$ & $1.86\mathrm{e}{0}$ & $4.44\mathrm{e}{0}$ \\
HS84 & 11 & 6 & $3.89\mathrm{e}{0}$ & $2.03\mathrm{e}{-3}$ & $3.24\mathrm{e}{-5}$ \\
HS85 & 43 & 38 & $4.64\mathrm{e}{-6}$ & $4.05\mathrm{e}{-5}$ & $1.27\mathrm{e}{-7}$ \\
HS86 & 15 & 10 & $1.92\mathrm{e}{-8}$ & $1.40\mathrm{e}{-4}$ & $1.79\mathrm{e}{-7}$ \\
HS88 & 3 & 1 & $6.94\mathrm{e}{-3}$ & $3.68\mathrm{e}{-3}$ & $1.92\mathrm{e}{-13}$ \\
HS89 & 4 & 1 & $7.12\mathrm{e}{-3}$ & $3.68\mathrm{e}{-3}$ & $1.92\mathrm{e}{-13}$ \\
HS90 & 5 & 1 & $7.06\mathrm{e}{-3}$ & $3.69\mathrm{e}{-3}$ & $1.84\mathrm{e}{-13}$ \\
HS91 & 6 & 1 & $7.16\mathrm{e}{-3}$ & $3.69\mathrm{e}{-3}$ & $1.91\mathrm{e}{-13}$ \\
HS92 & 7 & 1 & $7.14\mathrm{e}{-3}$ & $3.69\mathrm{e}{-3}$ & $1.91\mathrm{e}{-13}$ \\
HS93 & 8 & 2 & $3.88\mathrm{e}{-4}$ & $1.21\mathrm{e}{-4}$ & $2.42\mathrm{e}{-8}$ \\
HUBFIT & 3 & 1 & $5.22\mathrm{e}{-13}$ & $1.02\mathrm{e}{-5}$ & $3.10\mathrm{e}{-18}$ \\
KIWCRESC & 5 & 2 & $7.64\mathrm{e}{-17}$ & $2.77\mathrm{e}{-5}$ & $3.87\mathrm{e}{-17}$ \\
LAUNCH & 45 & 29 & $2.22\mathrm{e}{-4}$ & $3.24\mathrm{e}{-6}$ & $1.01\mathrm{e}{-5}$ \\
LOADBAL & 51 & 31 & $1.24\mathrm{e}{-4}$ & $3.19\mathrm{e}{-6}$ & $2.06\mathrm{e}{-8}$ \\
LSQFIT & 3 & 1 & $1.28\mathrm{e}{-14}$ & $1.28\mathrm{e}{-5}$ & $2.68\mathrm{e}{-20}$ \\
MADSEN & 9 & 6 & $1.35\mathrm{e}{-9}$ & $5.82\mathrm{e}{-5}$ & $1.55\mathrm{e}{-8}$ \\
MAKELA1 & 5 & 2 & $1.48\mathrm{e}{-15}$ & $2.50\mathrm{e}{-5}$ & $2.53\mathrm{e}{-17}$ \\
MAKELA2 & 6 & 3 & $3.30\mathrm{e}{-3}$ & $2.19\mathrm{e}{-5}$ & $2.98\mathrm{e}{-15}$ \\
MAKELA3 & 41 & 20 & $1.01\mathrm{e}{-3}$ & $1.04\mathrm{e}{-5}$ & $1.09\mathrm{e}{-15}$ \\
MAKELA4 & 61 & 40 & $1.29\mathrm{e}{-2}$ & $3.14\mathrm{e}{-5}$ & $3.87\mathrm{e}{-4}$ \\
MATRIX2 & 8 & 2 & $2.42\mathrm{e}{-9}$ & $1.02\mathrm{e}{-5}$ & $1.72\mathrm{e}{-16}$ \\
MESH & 65 & 48 & $4.57\mathrm{e}{-7}$ & $6.78\mathrm{e}{-6}$ & $2.06\mathrm{e}{-9}$ \\
MIFFLIN1 & 5 & 2 & $1.93\mathrm{e}{-17}$ & $2.10\mathrm{e}{-5}$ & $9.76\mathrm{e}{-18}$ \\
MIFFLIN2 & 5 & 2 & $4.50\mathrm{e}{-16}$ & $3.27\mathrm{e}{-5}$ & $5.44\mathrm{e}{-17}$ \\
MINMAXBD & 25 & 20 & $2.02\mathrm{e}{-1}$ & $4.17\mathrm{e}{-6}$ & $5.75\mathrm{e}{-7}$ \\
MINMAXRB & 7 & 4 & $2.93\mathrm{e}{-16}$ & $2.14\mathrm{e}{-5}$ & $7.63\mathrm{e}{-17}$ \\
MISTAKE & 22 & 13 & $4.54\mathrm{e}{-9}$ & $3.87\mathrm{e}{-5}$ & $6.22\mathrm{e}{-9}$ \\
MRIBASIS & 82 & 55 & $1.67\mathrm{e}{-1}$ & $4.91\mathrm{e}{-5}$ & $6.15\mathrm{e}{-5}$ \\
OPTPRLOC & 60 & 30 & $3.51\mathrm{e}{-3}$ & $9.76\mathrm{e}{-5}$ & $2.45\mathrm{e}{-6}$ \\
PENTAGON & 21 & 15 & $1.63\mathrm{e}{-8}$ & $5.34\mathrm{e}{-5}$ & $1.99\mathrm{e}{-8}$ \\
POLAK1 & 5 & 2 & $6.56\mathrm{e}{-2}$ & $4.33\mathrm{e}{-5}$ & $6.34\mathrm{e}{-18}$ \\
POLAK3 & 22 & 10 & $1.21\mathrm{e}{-8}$ & $5.77\mathrm{e}{-5}$ & $1.01\mathrm{e}{-8}$ \\
POLAK4 & 6 & 3 & $5.25\mathrm{e}{-12}$ & $2.35\mathrm{e}{-5}$ & $7.85\mathrm{e}{-12}$ \\
POLAK5 & 5 & 2 & $2.08\mathrm{e}{-14}$ & $5.31\mathrm{e}{-6}$ & $1.35\mathrm{e}{-17}$ \\
POLAK6 & 9 & 4 & $4.61\mathrm{e}{-8}$ & $4.38\mathrm{e}{-5}$ & $8.86\mathrm{e}{-10}$ \\
PRODPL0 & 69 & 29 & $2.34\mathrm{e}{2}$ & $4.13\mathrm{e}{-3}$ & $2.55\mathrm{e}{-3}$ \\
PRODPL1 & 69 & 29 & $2.35\mathrm{e}{2}$ & $4.13\mathrm{e}{-3}$ & $6.54\mathrm{e}{-3}$ \\
QC & 13 & 4 & $1.24\mathrm{e}{-1}$ & $6.54\mathrm{e}{-6}$ & $3.65\mathrm{e}{-5}$ \\
QCNEW & 12 & 3 & $2.05\mathrm{e}{-1}$ & $3.28\mathrm{e}{-4}$ & $9.81\mathrm{e}{-8}$ \\
QPCBLEND & 114 & 74 & $8.17\mathrm{e}{-6}$ & $4.70\mathrm{e}{-4}$ & $3.39\mathrm{e}{-8}$ \\
QPNBLEND & 114 & 74 & $2.29\mathrm{e}{-6}$ & $3.22\mathrm{e}{-4}$ & $1.98\mathrm{e}{-7}$ \\
RES & 22 & 14 & $4.80\mathrm{e}{-13}$ & $6.29\mathrm{e}{-6}$ & $9.59\mathrm{e}{-9}$ \\
ROSENMMX & 9 & 4 & $9.67\mathrm{e}{-2}$ & $3.89\mathrm{e}{-5}$ & $3.89\mathrm{e}{-5}$ \\
S268 & 10 & 5 & $1.59\mathrm{e}{-2}$ & $1.55\mathrm{e}{-5}$ & $3.99\mathrm{e}{-5}$ \\
S277-280 & 8 & 4 & $8.36\mathrm{e}{-11}$ & $7.97\mathrm{e}{-5}$ & $1.55\mathrm{e}{-16}$ \\
SCW1 & 17 & 8 & $8.11\mathrm{e}{-7}$ & $4.09\mathrm{e}{-5}$ & $4.96\mathrm{e}{-9}$ \\
SIMPLLPA & 4 & 2 & $3.33\mathrm{e}{-1}$ & $2.68\mathrm{e}{-5}$ & $1.66\mathrm{e}{-15}$ \\
SIMPLLPB & 5 & 3 & $5.78\mathrm{e}{-18}$ & $3.52\mathrm{e}{-5}$ & $1.75\mathrm{e}{-9}$ \\
SNAKE & 4 & 2 & $2.53\mathrm{e}{-15}$ & $6.83\mathrm{e}{-4}$ & $9.74\mathrm{e}{-9}$ \\
SPIRAL & 5 & 2 & $9.60\mathrm{e}{-5}$ & $3.14\mathrm{e}{-5}$ & $1.57\mathrm{e}{-16}$ \\
STANCMIN & 5 & 2 & $1.41\mathrm{e}{-2}$ & $6.56\mathrm{e}{-6}$ & $1.01\mathrm{e}{-14}$ \\
SWOPF & 97 & 92 & $2.27\mathrm{e}{-4}$ & $1.84\mathrm{e}{-6}$ & $4.24\mathrm{e}{-7}$ \\
SYNTHES1 & 12 & 6 & $1.28\mathrm{e}{1}$ & $1.31\mathrm{e}{-4}$ & $2.22\mathrm{e}{-9}$ \\
SYNTHES2 & 25 & 15 & $3.60\mathrm{e}{-6}$ & $1.15\mathrm{e}{-4}$ & $1.90\mathrm{e}{-8}$ \\
SYNTHES3 & 38 & 23 & $5.94\mathrm{e}{-9}$ & $2.75\mathrm{e}{-4}$ & $1.57\mathrm{e}{-7}$ \\
TENBARS1 & 19 & 9 & $5.43\mathrm{e}{1}$ & $2.12\mathrm{e}{-2}$ & $1.19\mathrm{e}{-3}$ \\
TENBARS4 & 19 & 9 & $3.61\mathrm{e}{1}$ & $1.98\mathrm{e}{-2}$ & $6.57\mathrm{e}{-5}$ \\
TFI1 & 104 & 101 & $4.03\mathrm{e}{-1}$ & $2.26\mathrm{e}{-5}$ & $1.24\mathrm{e}{-2}$ \\
TFI2 & 104 & 101 & $1.39\mathrm{e}{-15}$ & $1.28\mathrm{e}{-4}$ & $4.44\mathrm{e}{-8}$ \\
TFI3 & 104 & 101 & $4.84\mathrm{e}{-16}$ & $9.28\mathrm{e}{-5}$ & $5.19\mathrm{e}{-8}$ \\
TRO3X3 & 31 & 13 & $4.15\mathrm{e}{-1}$ & $1.85\mathrm{e}{-3}$ & $3.32\mathrm{e}{-18}$ \\
TRO4X4 & 64 & 25 & $9.91\mathrm{e}{-1}$ & $2.64\mathrm{e}{-3}$ & $1.10\mathrm{e}{-13}$ \\
TRO5X5 & 109 & 41 & $4.90\mathrm{e}{-1}$ & $7.25\mathrm{e}{-3}$ & $6.13\mathrm{e}{-19}$ \\
TRO6X2 & 46 & 21 & $1.71\mathrm{e}{-1}$ & $9.99\mathrm{e}{-1}$ & $1.08\mathrm{e}{-12}$ \\
TRUSPYR1 & 12 & 4 & $6.63\mathrm{e}{-1}$ & $7.17\mathrm{e}{-3}$ & $7.88\mathrm{e}{-10}$ \\
TRUSPYR2 & 19 & 11 & $1.30\mathrm{e}{0}$ & $3.11\mathrm{e}{-2}$ & $1.56\mathrm{e}{-2}$ \\
TWOBARS & 4 & 2 & $1.21\mathrm{e}{-9}$ & $3.20\mathrm{e}{-5}$ & $7.67\mathrm{e}{-10}$ \\
WOMFLET & 6 & 3 & $8.28\mathrm{e}{-2}$ & $2.29\mathrm{e}{-4}$ & $3.71\mathrm{e}{-3}$ \\
ZECEVIC2 & 4 & 2 & $1.62\mathrm{e}{-10}$ & $1.51\mathrm{e}{-5}$ & $1.63\mathrm{e}{-9}$ \\
ZECEVIC3 & 4 & 2 & $4.30\mathrm{e}{-10}$ & $8.66\mathrm{e}{-6}$ & $9.24\mathrm{e}{-9}$ \\
ZECEVIC4 & 4 & 2 & $3.27\mathrm{e}{-11}$ & $1.39\mathrm{e}{-5}$ & $2.07\mathrm{e}{-9}$ \\
ZY2 & 5 & 2 & $9.42\mathrm{e}{-9}$ & $1.87\mathrm{e}{-5}$ & $2.22\mathrm{e}{-8}$ \\
\bottomrule
\end{longtable}
\end{center}

\section{Conclusion}\label{sec:summary}

In this paper, we propose and study an algorithmic framework for adaptive directional decomposition methods designed to solve nonconvex constrained optimization problems involving both equality and inequality constraints, in both deterministic and stochastic settings. The core idea of the framework is to first determine a search direction, leveraging decomposition techniques, that balances the descent of objective function and the  reduction of constraint violation at each iteration. The framework then adaptively updates the stepsize and merit parameter to ensure progress. We begin by developing a baseline method for equality-constrained optimization and subsequently extend it to handle problems with both equality and inequality constraints by appropriately adjusting for the nature of the constraints. Finally, we incorporate specific random sampling strategies within the framework to tackle problems where only stochastic estimates of gradients and constraint function values are available. Under suitable assumptions and constraint qualification conditions, we establish global convergence guarantees and derive complexity bounds for attaining an $\epsilon$-KKT point for the proposed algorithmic variants. To the best of our knowledge, the complexity bounds achieved in this paper match or even surpass those of existing methods under similar problem settings.

\bibliographystyle{plain}
\bibliography{reference}

\appendix

\renewcommand{\thelemma}{\thesection.\arabic{lemma}}
\setcounter{lemma}{0}
\section{Auxiliary Lemmas}
\begin{lemma}[Projection Formula]\label{lm:aff_proj}
  Given $y \in \R^d$, $b \in \R^m$ and $W \in \R^{d \times m}$ being of full column rank, it holds that
    \begin{equation*}
    \argmin_{v \in \R^d : W^{\top} v = b} \frac{1}{2} \norm{ v - y}^2 = y - W(W^{\top} W)^{-1} (W^{\top} y + b).
  \end{equation*} 
\end{lemma}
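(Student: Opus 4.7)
The plan is to prove this Projection Formula via the standard Lagrangian/KKT argument, since the problem is a strongly convex quadratic program with linear equality constraints, hence has a unique minimizer fully characterized by first-order optimality conditions.

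First I would set up the Lagrangian
\[
L(v,\lambda) = \frac{1}{2}\|v-y\|^2 + \lambda^\top(W^\top v - b),
\]
with $\lambda \in \R^m$. Stationarity in $v$ yields $v - y + W\lambda = 0$, so any optimal $v^*$ takes the form $v^* = y - W\lambda^*$. Substituting this into the feasibility constraint $W^\top v^* = b$ gives the reduced linear system $W^\top W \lambda^* = W^\top y - b$.

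Next I would invoke the full column rank assumption on $W$, which guarantees that $W^\top W \in \R^{m\times m}$ is symmetric positive definite, hence invertible. Therefore $\lambda^* = (W^\top W)^{-1}(W^\top y - b)$ exists and is unique, and back-substitution yields $v^* = y - W(W^\top W)^{-1}(W^\top y - b)$, matching the asserted closed form (up to the sign convention used in the statement for $b$). To justify optimality rather than mere stationarity, I would note that the objective $\tfrac12\|v-y\|^2$ is strongly convex in $v$ and the feasible set is a nonempty affine subspace (nonemptiness follows from $W(W^\top W)^{-1}b$ being a feasible point), so any KKT point is the unique global minimizer.

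There is essentially no substantive obstacle in this proof; it is a routine computation. The only point that requires care is verifying that $W^\top W$ is invertible, which follows directly from the full column rank hypothesis on $W$, and ensuring the sign conventions in the closed-form expression are consistent with how the constraint $W^\top v = b$ is written. If one prefers a purely geometric argument, an alternative route would be to verify directly that the candidate vector $v^* = y - W(W^\top W)^{-1}(W^\top y - b)$ is feasible and that the residual $y - v^* = W(W^\top W)^{-1}(W^\top y - b)$ lies in $\operatorname{range}(W)$, which is the orthogonal complement of the tangent space of the affine feasible set; the minimality then follows from the orthogonal decomposition of any feasible perturbation.
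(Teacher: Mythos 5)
Your Lagrangian/KKT derivation is correct and complete: strong convexity of the objective plus an affine, nonempty feasible set means the unique KKT point is the global minimizer, and full column rank of $W$ gives invertibility of $W^\top W$. The paper states this lemma as an unproven auxiliary fact, so there is no in-paper argument to compare against; your proof is the standard one and fills that gap. One point you flag in passing deserves to be stated more forcefully: your computation yields $v^* = y - W(W^\top W)^{-1}(W^\top y - b)$, whereas the lemma as printed has $(W^\top y + b)$. The printed version is not merely a different ``sign convention'' --- it is wrong for $b \neq 0$, since substituting it gives $W^\top v^* = -b$, i.e.\ the claimed minimizer is infeasible. Your formula with $-b$ is the correct one, and it is also the version the paper implicitly uses: in \eqref{eq:proj_auxtan} the constraint has $b=0$ so the discrepancy is invisible, and checking the claim around \eqref{opt4sk} (that $s(x)$ from \eqref{eq:orth_field} solves that subproblem) requires the $-b$ version. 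So your proof is right and the lemma statement contains a sign typo.
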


\begin{lemma}[Sherman-Morrison-Woodbury Formula]\label{lm:smw}
Let \( A, B \in\R^{m\times m} \) be invertible matrices. The difference of their inverses satisfies:
\[
A^{-1} - B^{-1} = A^{-1} (B - A) B^{-1}.
\]
\end{lemma}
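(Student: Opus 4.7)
The identity is a purely algebraic statement requiring no structural hypothesis beyond the invertibility of $A$ and $B$, so I would prove it by direct manipulation rather than by any appeal to structural results. The cleanest route is to start from the right-hand side and distribute the factor $(B-A)$ between the two outer inverses. Concretely, I would write
\[
A^{-1}(B-A)B^{-1} \;=\; A^{-1} B B^{-1} - A^{-1} A B^{-1},
\]
and then invoke $BB^{-1}=I_m$ and $A^{-1}A = I_m$ to collapse the two products to $A^{-1}$ and $B^{-1}$ respectively, which immediately yields $A^{-1}-B^{-1}$ as claimed.

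An equivalent verification, which I would mention only if a more symmetric presentation were needed, is to multiply the desired identity on the left by $A$ and on the right by $B$, reducing it to the trivial equation $B - A = B - A$; this reformulation also highlights that invertibility of both $A$ and $B$ is the only hypothesis actually used. There is no real obstacle here: the lemma is a one-line calculation, and its role in the paper is purely instrumental, being invoked in the perturbation bound \eqref{diff-d} within the proof of Lemma \ref{lm:sod} to relate $(\tilde{\nabla} c_k^\top \tilde{\nabla} c_k)^{-1}$ to $(\nabla c_k^\top \nabla c_k)^{-1}$ via the difference of the underlying matrices.
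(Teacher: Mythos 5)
Your proof is correct: expanding $A^{-1}(B-A)B^{-1} = A^{-1}BB^{-1} - A^{-1}AB^{-1} = A^{-1} - B^{-1}$ is the standard one-line verification, and the paper itself states this auxiliary lemma without proof, so there is no alternative argument to compare against.
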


\begin{lemma}[Vector Azuma-Hoeffding inequality]\label{lm:vazuma}
Let $\{u_k\}$ be a vector-valued martingale difference, where $\E[u_k|\mathcal F(u_1,...,u_{k-1})] = 0$ and $\|u_k\| \leq a_k$. Then with probability at least $1-\gamma$ it holds that
\begin{align*}
    \bigg\|\sum_k u_k\bigg\| \leq 3\sqrt{\log(1/\gamma)\sum_k a_k^2}.
\end{align*}
\end{lemma}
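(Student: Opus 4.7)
The plan is to reduce the claim to the classical Hilbert-space extension of Azuma--Hoeffding due to Pinelis. That result states that whenever $\{u_k\}$ is a martingale difference sequence taking values in a Hilbert space with $\|u_k\| \le a_k$ almost surely, the partial sums $S_n = \sum_{k=1}^n u_k$ satisfy
\[
\Pr\bigl(\|S_n\| \ge t\bigr) \;\le\; 2\exp\!\left(-\frac{t^2}{2\sum_{k} a_k^2}\right) \qquad \text{for every } t \ge 0.
\]
The hypotheses of our lemma -- conditional mean zero relative to the natural filtration and almost sure norm bound $\|u_k\|\le a_k$ -- are exactly what this tail inequality requires, so it applies directly to our $\{u_k\}$.

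First I would invoke the inequality above and set the right-hand side equal to $\gamma$, then invert for $t$. This yields, with probability at least $1-\gamma$, the bound $\|S_n\| \le \sqrt{2\log(2/\gamma)\sum_k a_k^2}$. Second, I would use the elementary inequality $\sqrt{2\log(2/\gamma)} \le 3\sqrt{\log(1/\gamma)}$, which holds whenever $\gamma$ is bounded away from $1$ (precisely, whenever $\log(1/\gamma) \ge (2/7)\log 2$, and certainly in the small-$\gamma$ regime that is relevant throughout the paper). Combining these two steps delivers the stated bound with the clean constant $3$.

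The main obstacle is the proof of Pinelis's inequality itself, which is nontrivial -- the standard route exploits the $2$-smoothness of the Hilbert norm to show that $\mathbb{E}[\cosh(\lambda \|S_k\|)\mid \mathcal F_{k-1}] \le \cosh(\lambda a_k)\cosh(\lambda\|S_{k-1}\|)$, iterates this recursion, and closes with a Chernoff-type optimization over $\lambda$. However, since Pinelis's result is a well-established black box routinely cited in the literature on stochastic optimization and concentration of measure, invoking it without reproducing its proof is the natural and expected way to dispatch this auxiliary lemma. Hence the argument reduces to little more than applying the cited inequality and a one-line algebraic tightening of the constant.
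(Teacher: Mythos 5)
The paper itself states this lemma as a black-box auxiliary result and offers no proof, so there is nothing to compare your argument against line by line; the relevant question is only whether your derivation is sound. It is, essentially: Pinelis's Hilbert-space martingale inequality does give $\Pr(\|\sum_k u_k\|\ge t)\le 2\exp(-t^2/(2\sum_k a_k^2))$ under exactly the stated hypotheses, and inverting at level $\gamma$ yields $t=\sqrt{2\log(2/\gamma)\sum_k a_k^2}$. Your final step, absorbing the constants into the factor $3$, requires $2\log 2+2\log(1/\gamma)\le 9\log(1/\gamma)$, i.e.\ $\gamma\le 2^{-2/7}\approx 0.82$; you state this caveat explicitly, and since every invocation of the lemma in the paper uses $\gamma$ of order $\epsilon/K$, the restriction is harmless. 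The one thing worth noting is that the lemma as written carries no restriction on $\gamma$, so strictly speaking your proof establishes a marginally weaker statement than the one displayed; either the lemma should be read with an implicit smallness assumption on $\gamma$ (as is standard for such high-probability bounds and as the paper's usage confirms), or one should observe that the slack constant $3$ is chosen precisely to absorb the prefactor in this regime. Modulo that bookkeeping point, your reduction to Pinelis is the natural and correct way to dispatch the lemma.
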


\begin{lemma}\label{lm:ass-est}
    Suppose Assumption \ref{ass:mfcq} holds and \(\|\tilde \nabla c_k - \nabla c_k\| \leq \min \{\frac{3\nu^2}{8L_c},\frac{\nu^2}{4+2\nu}\}\), then Assumption \ref{ass:estimates}(ii) holds with \(\tilde \nu = \frac{\nu}{2}\).
\end{lemma}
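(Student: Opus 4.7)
\medskip

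\noindent\textbf{Proof proposal.} The plan is to verify the two parts of Assumption~\ref{ass:estimates}(ii) separately. Throughout the proof I would use the bound $\|\tilde\nabla c_k-\nabla c_k\|\le\min\{\tfrac{3\nu^2}{8L_c},\tfrac{\nu^2}{4+2\nu}\}$, and exploit that under Assumption~\ref{ass:basic} one has $\nu\le L_c$ (since the smallest singular value of $\nabla c_k$ is dominated by the largest, which is at most $L_c$).

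For part (a), I would invoke Weyl's inequality for singular values, which gives $|\sigma_i(\tilde\nabla c_k)-\sigma_i(\nabla c_k)|\le\|\tilde\nabla c_k-\nabla c_k\|$ for every $i$. Since $\sigma_i(\nabla c_k)\ge\nu$ by Assumption~\ref{ass:mfcq}(i) and $\|\tilde\nabla c_k-\nabla c_k\|\le\tfrac{3\nu^2}{8L_c}\le\tfrac{3\nu}{8}$, all singular values of $\tilde\nabla c_k$ are lower bounded by $\nu-\tfrac{3\nu}{8}=\tfrac{5\nu}{8}\ge\tfrac{\nu}{2}$, which is exactly $\tilde\nu=\nu/2$ as claimed.

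For part (b), the idea is to construct $\tilde z_k$ by projecting the vector $z$ supplied by Assumption~\ref{ass:mfcq}(ii) onto the null space of $\tilde\nabla c_k^\top$ and then normalizing. Concretely, set
\[
\bar z:=z-\tilde\nabla c_k(\tilde\nabla c_k^\top\tilde\nabla c_k)^{-1}\tilde\nabla c_k^\top z,
\]
which, by part (a) just proved, is well defined and satisfies $\tilde\nabla c_k^\top\bar z=0$. The key step is the perturbation estimate: because $\nabla c_k^\top z=0$, we have $\tilde\nabla c_k^\top z=(\tilde\nabla c_k-\nabla c_k)^\top z$, so $\|\tilde\nabla c_k^\top z\|\le\|\tilde\nabla c_k-\nabla c_k\|$. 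Combining this with the operator-norm bound $\|\tilde\nabla c_k(\tilde\nabla c_k^\top\tilde\nabla c_k)^{-1}\|\le 1/\tilde\nu\le 2/\nu$ (a pseudo-inverse norm computation) yields
\[
\|\bar z-z\|\le\frac{2}{\nu}\|\tilde\nabla c_k-\nabla c_k\|\le\frac{2}{\nu}\cdot\frac{\nu^2}{4+2\nu}=\frac{\nu}{2+\nu}.
\]

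With this perturbation bound in hand, the remainder is bookkeeping. For every index $j$ with $[x_k]_j=0$, Assumption~\ref{ass:mfcq}(ii) gives $[z]_j\ge\nu$, hence
\[
[\bar z]_j\ge\nu-\frac{\nu}{2+\nu}=\frac{\nu(1+\nu)}{2+\nu},
\]
while $\|\bar z\|\le 1+\tfrac{\nu}{2+\nu}=\tfrac{2(1+\nu)}{2+\nu}$, which in particular ensures $\bar z\ne 0$. Setting $\tilde z_k:=\bar z/\|\bar z\|$ gives a unit vector satisfying $\tilde\nabla c_k^\top\tilde z_k=0$ and
\[
[\tilde z_k]_j\ge\frac{\nu(1+\nu)/(2+\nu)}{2(1+\nu)/(2+\nu)}=\frac{\nu}{2},
\]
which is exactly the required lower bound for $\tilde\nu=\nu/2$. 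The main obstacle is not any single inequality but the cancellation $\nabla c_k^\top z=0$ that turns $\|\tilde\nabla c_k^\top z\|$ into a first-order quantity in the gradient perturbation; once this is spotted, the two constants $\tfrac{3\nu^2}{8L_c}$ and $\tfrac{\nu^2}{4+2\nu}$ are tailored precisely so that parts (a) and (b) both yield the clean threshold $\tilde\nu=\nu/2$.
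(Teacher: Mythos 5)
Your proposal is correct and follows essentially the same route as the paper: part (b) is the identical construction (note that your projection correction $-\tilde\nabla c_k(\tilde\nabla c_k^\top\tilde\nabla c_k)^{-1}\tilde\nabla c_k^\top z$ equals the paper's $-(\tilde\nabla c_k^\top)^+\Delta_k^\top z_k$ precisely because $\nabla c_k^\top z=0$), with the same constants and the same final ratio bound $\tfrac{\nu-\nu/(2+\nu)}{1+\nu/(2+\nu)}=\tfrac{\nu}{2}$. The only cosmetic difference is in part (a), where you use Weyl's inequality together with $\nu\le L_c$ while the paper bounds the quadratic form $z^\top\tilde\nabla c_k^\top\tilde\nabla c_k z$ directly; both yield the required singular-value bound $\tilde\nu=\nu/2$.
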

\begin{proof}
    Let \(\Delta_k = \tilde \nabla c_k - \nabla c_k\). It follows from Assumptions \ref{ass:mfcq}  that for any nonzero vector \(z\), we have
    \begin{align*}
        z^\top \tilde \nabla c_k^\top \tilde \nabla c_k z &= z^\top (\nabla c_k + \Delta_k)^\top (\nabla c_k + \Delta_k) z = z^\top (\nabla c_k^\top \nabla c_k + \Delta_k^\top \nabla c_k + \nabla c_k^\top \Delta_k + \Delta_k^\top \Delta_k) z\\
        &\ge (\nu^2 - 2\|\nabla c_k\|\|\Delta_k\|) \|z\|^2 \geq \frac{\nu^2}{4} \|z\|^2 = \tilde \nu^2 \|z\|^2,
    \end{align*}
    where the last inequality comes from \(\|\Delta_k\| \leq \frac{3\nu^2}{8L_c}\). Thus condition (a) holds.  To prove condition (b), we need to construct a vector \(\tilde z_k \in \R^d\) with \(\|\tilde z_k\| = 1\). From Assumption \ref{ass:mfcq}, there exists a vector \(z_k \in \R^d\) with \(\|z_k\|=1\) such that:
    \begin{align*}
    \nabla c_i(x_k)^\top z_k & = 0 \quad \text{for all } i = 1, \ldots, m,\\
    [z_k]_j &\ge \nu \quad \text{for all } j \in \{ j : [x_k]_j = 0 \}.
    \end{align*}
    Then, consider a nonzero vector \(\tilde z_k' = z_k + \delta z_k\) such that \(\tilde \nabla c_k^\top \tilde z_k' = 0\), which means
    \[
    (\nabla c_k + \Delta_k)^\top (z_k + \delta z_k) = 0.
    \]
    Hence, \(\tilde \nabla c_k^\top \delta z_k = - \Delta_k^\top z_k\) thanks to \(\nabla c_k^\top z_k = 0\). Therefore, it is sufficient to choose \(\delta z_k = -(\tilde \nabla c_k^\top)^+ \Delta_k^\top z_k\) such that \(\tilde \nabla c_k^\top \tilde z_k' = 0\), where \((\tilde \nabla c_k^\top)^+\) is the Moore–Penrose inverse of \(\nabla c_k\). Now we scale \(\tilde z_k'\) to obtain 
    \[
    \tilde z_k = \frac{z_k - (\tilde \nabla c_k^\top)^+ \Delta_k^\top z_k}{\|z_k - (\tilde \nabla c_k^\top)^+ \Delta_k^\top z_k\|}.
    \]
    Now since \(\|\Delta_k\| \leq \frac{\nu^2}{4+2\nu}\) and the minimal singular value of \(\tilde \nabla c_k\) is \(\frac{\nu}{2}\), we have \(\|(\tilde \nabla c_k^\top)^+\Delta_k\| \leq \|(\tilde \nabla c_k^\top)^+\|\|\Delta_k\| \leq \frac{\nu}{2+\nu}\). Then for \(j \in \{ j : [x_k]_j = 0 \}\), it holds that
    \[
    [\tilde z_k]_j =  \frac{[z_k - (\tilde \nabla c_k^\top)^+ \Delta_k^\top z_k]_j}{\|z_k - (\tilde \nabla c_k^\top)^+ \Delta_k^\top z_k\|} \geq \frac{\nu - \|(\tilde \nabla c_k^\top)^+\Delta_k\|}{1+\|(\tilde \nabla c_k^\top)^+\Delta_k\|} \ge \frac{\nu}{2}=\tilde \nu.    \]
    Thus, condition (b) holds. The proof is completed.  
\end{proof}

\section{Perturbation theory}\label{sec:appendix-perturb}
\renewcommand{\thedefinition}{\thesection.\arabic{definition}}
\setcounter{definition}{0}
\setcounter{lemma}{0}
In this section, we present the perturbation theory used in this paper. Consider the constrained optimization problem of the form
\be\label{origin-p}
\begin{aligned}
    \min_{x \in \mathbb{R}^d} ~ f(x) \quad
    \text{s.t.}~ c_\mathcal{E}(x) = 0,~  c_\mathcal{I}(x) \leq 0,
\end{aligned}
\ee
where \(f:\R^d\to \R\),  \(c_i: \R^d\to \R, i\in\mathcal{E}\) and \(c_i:\R^d\to \R, i\in \mathcal{I}\) are twice continuously differentiable. With a little abuse of notation, we introduce the parameterized problem  
\be\label{perturbed-p}
\begin{aligned}
    \min_{x \in \mathbb{R}^d} ~ f(x,p) \quad
    \text{s.t.}~ c_\mathcal{E}(x,p) = 0,~  c_\mathcal{I}(x,p) \leq 0,
\end{aligned}
\ee
where \(p\) denotes the perturbation parameter. We assume that (i) when \(p = 0\), problem \eqref{perturbed-p} coincides with the origin problem \eqref{origin-p}; (ii) functions \(f(x,p)\), \(c_\mathcal{E}(x,p)\) and \(c_\mathcal{I}(x,p)\) are twice continuously differentiable in \((x,p)\). The Lagrange function associated with \eqref{perturbed-p}:
\[
L(x,\lambda,p) = f(x,p) + \lambda^\top \begin{bmatrix}
    c_\mathcal{E}(x,p) \\ c_\mathcal{I}(x,p)
\end{bmatrix}.
\]
Next, we introduce the well-known Gollan’s condition.
\begin{definition}[Gollan's condition]  For problem \eqref{perturbed-p}, 
we say that Gollan's condition holds at $(x,0)$ in direction $\Delta p$, if
\begin{enumerate}
    \item[(i)] the gradients $\nabla_x c_i(x,0)$, $i\in\mathcal E$, are linearly independent;
    \item[(ii)] there exists $z \in \mathbb{R}^d$ such that
    \begin{align*}
        \nabla c_{i}(x,0)^\top \begin{bmatrix}
            z\\
            \Delta p
        \end{bmatrix} &= 0, \quad i\in \mathcal E, \\
        \nabla c_j(x,0)^\top \begin{bmatrix}
            z\\
            \Delta p
        \end{bmatrix}  &< 0, \quad j \in I(x,0):= \{i\in\mathcal I: c_i(x,0)=0\}.%
    \end{align*}
\end{enumerate}
\end{definition}

Next, we provide a sufficient condition for the establishment of Gollan’s condition. 
\begin{lemma}[Proposition 5.50(v) \cite{bonnans2013perturbation}]\label{lm:gollan}
    If the MFCQ holds at the point \(x\), then Gollan's condition holds at $(x,0)$ in any direction \(\Delta p\).
\end{lemma}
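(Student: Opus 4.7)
The plan is to verify the two defining clauses of Gollan's condition at $(x,0)$ in an arbitrary direction $\Delta p$, starting from the MFCQ hypothesis at $x$. Clause (i), the linear independence of the family $\{\nabla_x c_i(x,0)\}_{i \in \mathcal{E}}$, is literally the first part of MFCQ and therefore requires no further work.

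For clause (ii), the strategy is to construct the required $z \in \mathbb{R}^d$ in two stages. First, exploiting the linear independence in clause (i), I would solve the consistent linear system
\[
\nabla_x c_i(x,0)^\top z_0 = -\nabla_p c_i(x,0)^\top \Delta p, \qquad i \in \mathcal{E},
\]
to obtain a particular $z_0$. Second, MFCQ supplies a direction $d \in \mathbb{R}^d$ satisfying $\nabla_x c_i(x,0)^\top d = 0$ for all $i \in \mathcal{E}$ and $\nabla_x c_j(x,0)^\top d < 0$ for every $j \in I(x,0)$. I would then take $z := z_0 + t\, d$ with a scalar $t > 0$ to be chosen.

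For any $i \in \mathcal{E}$, by construction of $z_0$ and $d$,
\[
\nabla_x c_i(x,0)^\top z + \nabla_p c_i(x,0)^\top \Delta p = 0
\]
holds identically in $t$, so the equality part of clause (ii) is automatic. For $j \in I(x,0)$, the relevant quantity decomposes as
\[
\nabla_x c_j(x,0)^\top z + \nabla_p c_j(x,0)^\top \Delta p = \bigl(\nabla_x c_j(x,0)^\top z_0 + \nabla_p c_j(x,0)^\top \Delta p\bigr) + t\,\nabla_x c_j(x,0)^\top d,
\]
whose $t$-coefficient is strictly negative. Because $I(x,0)$ is a finite index set, one can choose $t$ large enough to push this expression strictly below zero simultaneously for every $j \in I(x,0)$, completing the verification of clause (ii).

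There is no genuine obstacle here; the argument is essentially bookkeeping. The two pillars, namely solvability of the first linear system (from independence of the equality gradients) and uniform strict descent on active inequalities (from the MFCQ direction $d$), are both supplied directly by the hypothesis, and the decisive trick is the standard device of shifting by a sufficiently large multiple of the MFCQ direction $d$. This is the classical parametric extension of MFCQ recorded as Proposition 5.50(v) in \cite{bonnans2013perturbation}.
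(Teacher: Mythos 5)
Your proof is correct. Note that the paper does not actually prove this lemma; it is stated as a quotation of Proposition 5.50(v) in Bonnans--Shapiro, so there is no in-paper argument to compare against. Your two-stage construction --- first solving the consistent linear system $\nabla_x c_{\mathcal E}(x,0)^\top z_0 = -\nabla_p c_{\mathcal E}(x,0)^\top \Delta p$ (solvable because the equality gradients are linearly independent, which is clause (i) and part of MFCQ), then adding a sufficiently large multiple $t\,d$ of the MFCQ direction $d$ to obtain strict negativity on the finitely many active inequality indices without disturbing the equality conditions --- is the standard and complete argument, so your proposal supplies a valid self-contained proof of a statement the paper handles only by citation.
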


To proceed, we consider the following strong form of second-order sufficient optimality conditions (in a direction \(\Delta p\)) of \eqref{perturbed-p}:
\begin{align}\label{strong-sosc}
\sup_{\lambda \in S(DL_{\Delta p})} z^\top \nabla^2_{xx} L(x,\lambda,0) z > 0,~ \forall z \in C(x) \backslash \{0\}.
\end{align}
Here, \(C(x) = \{z: \nabla f(x)^\top z \leq 0, \nabla c_\mathcal{E}(x)^\top z = 0, \nabla c_j(x)^\top z \leq 0, j \in I(x,0)\}\) and
\[
S(DL_{\Delta p}) = \{\lambda \in \Lambda (x,0) : \lambda_i = 0,~i \notin \bar I(x,0,z,\Delta p)\},
\]
where \(\Lambda (x,0)\) is the set of Lagrange multipliers at \((x,0)\) and 
\[
\bar I(x,0,z,\Delta p) = \{j \in I(x,0): \nabla c_j(x,0)^\top \begin{bmatrix}
            z\\
            \Delta p
        \end{bmatrix}  = 0\}.
\]

Then we have the following result.
\begin{lemma}\label{lm:perturbation}
For the  perturbation problem  \eqref{perturbed-p} with solution denoted as $x(p)$,  suppose that
\begin{itemize}
    \item[(i)] the unperturbed problem \eqref{origin-p} has a unique optimal solution \(x(0)\),
    \item[(ii)] Gollan's condition holds at $(x(0),0)$ in the direction \( p\),
    \item[(iii)] the set \(\Lambda (x(0),0)\) of Lagrange multipliers is nonempty,
    \item[(iv)] the strong second-order sufficient conditions \eqref{strong-sosc} hold at $(x(0),0)$, and 
    \item[(v)] for all \( p \) small enough, the solution set of perturbed problem is nonempty and uniformly bounded.
\end{itemize}
Then for any optimal solution \( x( p) \) of perturbed problem,  \( x(  p) \) is Lipschitz stable at \( x(0) \), i.e., $ \|x(  p) - x(0)\| = O(\|  p\|). $
\end{lemma}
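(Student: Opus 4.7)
\medskip
\noindent\textit{Proof plan.} The plan is to follow the standard framework of perturbation analysis for parameterized optimization problems as developed in Bonnans and Shapiro. The argument proceeds in three stages: continuity of the solution mapping, existence and boundedness of multipliers, and finally the quantitative Lipschitz estimate.

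First, I would establish $x(p)\to x(0)$ as $p\to 0$. By hypothesis (v) the set of solutions to the perturbed problem is nonempty and uniformly bounded for $\|p\|$ small, so any sequence $x(p_n)$ with $p_n\to 0$ admits a convergent subsequence. The joint continuity of $f,c_{\mathcal E},c_{\mathcal I}$ in $(x,p)$ and the uniqueness assumption (i) force every limit point to coincide with $x(0)$. Next, I would use Gollan's condition (ii) together with the openness of this condition under smooth perturbations of the problem data to guarantee that Gollan's condition, and hence a suitable constraint qualification, also holds at $(x(p),p)$ for small $\|p\|$. This ensures nonemptiness and uniform boundedness of $\Lambda(x(p),p)$, so the perturbed problem admits well-behaved KKT multipliers $\lambda(p)$.

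The central step is to combine the strong second-order sufficient condition \eqref{strong-sosc} with a feasibility-restoration argument to obtain the linear rate. Strong SOSC together with Gollan's condition implies a quadratic growth property of the unperturbed problem: there exist $\alpha>0$ and a neighborhood $\mathcal N$ of $x(0)$ such that
\[
f(x) \;\geq\; f(x(0)) + \alpha\,\|x-x(0)\|^2
\qquad\text{for every $x\in\mathcal N$ feasible for \eqref{origin-p}.}
\]
On the other hand, because Gollan's condition provides a feasible direction, a standard Robinson-type stability argument yields a perturbed-feasible point $\tilde x(p)$ with $\|\tilde x(p)-x(0)\|=O(\|p\|)$ and $f(\tilde x(p),p)=f(x(0),0)+O(\|p\|)$. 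By optimality of $x(p)$, one then has $f(x(p),p)\leq f(\tilde x(p),p)$, and by smoothness of $f$ in $p$, $f(x(p),0)=f(x(p),p)+O(\|p\|)$. Chaining these inequalities gives $\alpha\|x(p)-x(0)\|^2 \leq O(\|p\|)$, i.e.\ an $O(\sqrt{\|p\|})$ bound.

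The hard part is upgrading this Hölder estimate to the Lipschitz estimate asserted in the lemma. The main obstacle is handling the changing active set of $c_{\mathcal I}(\cdot,p)$ and extracting the first-order information from the KKT systems of both problems. I would resolve this by working with a Lagrangian-value comparison: writing out the KKT conditions for both $(x(0),\lambda(0))$ and $(x(p),\lambda(p))$, subtracting, and using a mean-value expansion in $(x,p)$ along the segment between the two, the strong SOSC provides a uniformly positive quadratic form on the critical cone that absorbs the primal displacement, while the bounded multipliers and smoothness of the data control the perturbation in $p$ linearly. This yields the bound $\|x(p)-x(0)\|=O(\|p\|)$. Equivalently, and perhaps more cleanly, one may observe that under strong SOSC together with the (verified) constraint qualification the unperturbed KKT system is Robinson-strongly-regular, whence Robinson's implicit function theorem for generalized equations delivers a Lipschitz localization of the solution map with respect to canonical perturbations, from which the stated estimate follows after observing that the parametric perturbation enters the KKT system in a $C^1$ manner.
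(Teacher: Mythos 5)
The paper does not reprove this result: its entire argument is a citation to Theorem~5.53 of Bonnans and Shapiro \cite{bonnans2013perturbation}, together with the single observation that the uniform boundedness of the \emph{feasible} set assumed there is used only to deduce uniform boundedness of the \emph{solution} set, which condition (v) of the lemma assumes directly. Your proposal instead tries to reconstruct the proof, and the reconstruction has genuine gaps. First, in your H\"older step you apply the quadratic growth inequality $f(x)\ge f(x(0))+\alpha\|x-x(0)\|^2$ to $x=x(p)$, but that inequality is only asserted for points feasible for the \emph{unperturbed} problem, while $x(p)$ is feasible only for the perturbed one; you would need a metric-regularity (error-bound) estimate showing ${\rm dist}(x(p),\,\{\text{feasible set of }\eqref{origin-p}\})=O(\|p\|)$, or a Lagrangian-based growth condition, before this step is valid. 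Second, your ``cleaner'' alternative via Robinson's strong regularity is not available under the stated hypotheses: strong regularity of the KKT generalized equation requires local single-valuedness of the primal--dual solution map, which fails whenever $\Lambda(x(0),0)$ is a nontrivial polytope. Gollan's directional condition (ii) is far weaker than LICQ and does not force a unique multiplier, and the ``strong'' second-order condition \eqref{strong-sosc} is the directional form of Bonnans--Shapiro (a sup over the subset $S(DL_{\Delta p})$), not Robinson's strong SOSC, so this route does not go through.

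Third, the route you actually favor --- subtracting the two KKT systems and applying a mean-value expansion --- glosses over the crux of Theorem~5.53: because the multipliers $\lambda(p)$ need not converge to any particular element of $\Lambda(x(0),0)$, one cannot simply ``subtract KKT systems.'' The actual argument pairs a \emph{linear upper} estimate of the optimal value $v(p)\le v(0)+O(\|p\|)$ (obtained from Gollan's condition via a feasible-direction construction) with a \emph{second-order lower} expansion of $f(x(p),p)$ in which one maximizes over $\lambda\in S(DL_{\Delta p})$; it is precisely the specific form of \eqref{strong-sosc} that lets the quadratic term $\alpha\|x(p)-x(0)\|^2$ survive this maximization and be dominated by the $O(\|p\|)$ gap, yielding the Lipschitz rate directly rather than through a H\"older-then-upgrade scheme. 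If you want a self-contained proof, you should either carry out that value-function sandwich explicitly or, as the paper does, invoke \cite[Theorem~5.53]{bonnans2013perturbation} and only justify the replacement of its feasible-set boundedness hypothesis by the solution-set boundedness in condition (v).
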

\begin{proof}
    We follow the proof of Theorem 5.53 in \cite{bonnans2013perturbation} to prove this lemma. The conditions (i)-(iv) are  same as those in \cite{bonnans2013perturbation}, while the condition (v) in \cite{bonnans2013perturbation} uses the non-emptiness and uniform boundedness of feasible set of the perturbed problem. However, the uniform boundedness of feasible set is only used to prove the boundedness of the solution set, i.e., (v) in Lemma \ref{lm:perturbation}, thus Lemma \ref{lm:perturbation} holds.
\end{proof}

\section{Supported Proofs}

\subsection{Proof of Lemma \ref{lm:infeasibility}}\label{proof:lm3}
\begin{proof}
    First, {since \(c_k \neq 0\), we verify the slater condition (see \cite[section 5.2.3]{boyd2004convex}) holds for problem \eqref{opt4wu} with \((w,v) = (0,0)\).}
    Then by the optimality of $(w_k,v_k)$, there exist \(\mu_k \in \R^d_{\ge 0}\), {\(\pi_k \in \R_{\ge 0}\)} and \(\lambda_k \in \R^m\) such that
    \be\begin{aligned}\label{kkt-opt4wu}
        \nabla c_k^\top \nabla c_k c_k - \nabla c_k^\top \nabla c_k \nabla c_k^\top \nabla c_k w_k - \nabla c_k^\top \mu_k = 0, ~ \nabla c_k \lambda_k + 2\pi_k v_k - \mu_k = 0,\\
        \nabla c_k^\top v_k = 0,~~\|v_k\|^2 \leq \|c_k\|^2,~~x_k - \nabla c_k w_k + v_k \geq 0,\\
        \pi_k (\|v_k\|^2 - \|c_k\|^2)=0,~~(x_k - \nabla c_k w_k + v_k)^\top \mu_k = 0.
    \end{aligned}\ee
    Hence, thanks to Assumption \ref{ass:cq}, it holds that
    \be\label{lam}
        \lambda_k = (\nabla c_k^\top \nabla c_k)^{-1} \nabla c_k^\top \mu_k = c_k - \nabla c_k^\top \nabla c_k w_k
    \ee
    and \[\mu_k = 2\pi_k v_k + \nabla c_k c_k - \nabla c_k \nabla c_k^\top \nabla c_k w_k.\] 
    
    If \(w_k = 0\), then  $\lambda_k=c_k,$ \(\nabla c_k c_k - \mu_k = -2\pi_k v_k\) and \(0 \leq x_k^\top \mu_k = -v_k^\top \mu_k\).
    According to \(\nabla c_k \lambda_k + 2\pi_k v_k - \mu_k = 0\), we have
    \[
    v_k^\top \nabla c_k \lambda_k + 2\pi_k \|v_k\|^2 - v_k^\top \mu_k = 0,
    \]
    which implies from $\nabla c_k^\top v_k=0$ that
    \(
    2\pi_k \|v_k\|^2 + x_k^\top \mu_k= 0.
    \)
    Then it holds that \(x_k^\top \mu_k = 0\) and $\pi_k\|v_k\|^2=0$, which indicates 
     \(\nabla c_k c_k - \mu_k = 0\). Hence,   \(x_k\) is an infeasible stationary point of  \eqref{p3}. 
    
    We now consider the case when  \(\|w_k\|\leq \epsilon'\). It follows from \eqref{kkt-opt4wu} and \eqref{lam} that \(\nabla c_k c_k - \mu_k = \nabla c_k \nabla c_k^\top \nabla c_k w_k - 2\pi_k v_k\) and \(0 \leq x_k^\top \mu_k = w_k^\top \nabla c_k^\top \mu_k - v_k^\top \mu_k\).
    Using \(\nabla c_k \lambda_k + 2\pi_k v_k - \mu_k = 0\) again yields
    \be\begin{aligned}\label{879}
    0 &= v_k^\top \nabla c_k \lambda_k + 2\pi_k \|v_k\|^2 - v_k^\top \mu_k \\
    &=2\pi_k \|v_k\|^2 + x_k^\top \mu_k - w_k^\top \nabla c_k^\top \mu_k.
    \end{aligned}\ee
    If \(\|v_k\| < \|c_k\|\), it holds that  \(\pi_k = 0\), then
    \begin{align*}
        \quad ~\|\nabla c_k c_k - \mu_k\| = \|\nabla c_k \nabla c_k^\top \nabla c_k w_k\| 
        \leq L_c^3\epsilon',
    \end{align*}
    and
    \begin{align*}
        x_k^\top \mu_k = w_k^\top \nabla c_k^\top \mu_k = w_k^\top(\nabla c_k^\top \nabla c_k c_k) - w_k^\top \nabla c_k^\top \nabla c_k \nabla c_k^\top \nabla c_k w_k \leq L_c^2 C\epsilon'.
    \end{align*}
    If \(\|v_k\| = \|c_k\|\), it holds form \eqref{879} that
    \[
    2\pi_k\|v_k\|^2= 2\pi_k \|c_k\|^2 \leq w_k^\top \nabla c_k^\top \mu_k = w_k^\top \nabla c_k^\top \nabla c_k c_k - w_k^\top \nabla c_k^\top \nabla c_k \nabla c_k^\top \nabla c_k w_k \leq L_c^2 \epsilon' \|c_k\|,
    \]
    which implies \(2\pi_k\| v_k\| \leq L_c^2 \epsilon'\). Then, we obtain
    \begin{align*}
        \quad ~\|\nabla c_k c_k - \mu_k\| = \|\nabla c_k \nabla c_k^\top \nabla c_k w_k\| + \|2 \pi_k v_k\|
        \leq L_c^2(1+L_c)\epsilon',
    \end{align*}
    and
    \begin{align*}
        x_k^\top \mu_k = {w_k^\top \nabla c_k^\top \mu_k - 2\pi_k \|c_k\|^2 \le L_c^2 C\epsilon'.}
    \end{align*}
    Setting \(\kappa_1 = L_c^2(1+L_c)\) and \(\kappa_2 =  L_c^2 C\) derives the conclusions. Consequently, for given $\epsilon>0$, if $\|c_k\|>\epsilon$ and $\|w_k\|\le \epsilon/\max\{\kappa_1,\kappa_2\}$, $x_k$ is an $\epsilon$-infeasible stationary point of \eqref{p3}.
\end{proof}


\subsection{Proof of Lemma \ref{lm:ass}}\label{proof:lm-ass}
\begin{proof}
    For any given $x'\in\R^d$, we consider such a  vector   \(x\in \R^d\) satisfying that for any $j=1,\ldots,d,$
    \[
    x_j =
    \begin{cases}
    0, & \text{if } 0 \le [x']_j \le \iota,\\[4pt]
    x'_j, & \text{otherwise}.
    \end{cases}
    \]
    Then by Assumption \ref{ass:mfcq} there exists  a vector \(z(x) \in \R^d\) with \(\|z(x)\|=1\) such that
    \be\begin{aligned}\label{mfcq1}
    \nabla c_i(x)^\top z(x) & = 0 \quad \text{for all } i = 1, \ldots, m,\\
    [z(x)]_j & \ge \nu \quad \text{for all } j \in \{ j : [x]_j = 0 \}.
    \end{aligned}\ee
    Define \[u = z(x) - \nabla c(x') (\nabla c(x')^{\top} \nabla c(x'))^{-1} \nabla c(x')^{\top}z(x)\quad\mbox{and}\quad z'=\frac{u}{\|u\|}.\] We will show that \(z' \) satisfies \eqref{z'}. From the definition of \(u\), the first line in \eqref{z'} naturally holds. The rest is to prove the second line. Note that since both $z(x)$ and $u/\|u\|$ are located on the unit sphere and the latter is the projection of $u$ onto the unit sphere,  we know $\|{u}/{\|u\|}-u\|_2\le \|u-z(x)\|_2$, which further implies that 
    \begin{align*}
        & \|z' - z(x)\| = \left\|\frac{u}{\|u\|} - z(x)\right\| = \left\|\frac{u}{\|u\|} - u + u - z(x)\right\| \leq 2\|u-z(x)\|\\
        &= 2\|\nabla c(x) (\nabla c(x)^{\top} \nabla c(x))^{-1} \nabla c(x)^{\top}z(x) - \nabla c(x') (\nabla c(x')^{\top} \nabla c(x'))^{-1} \nabla c(x')^{\top}z(x)\|\\
        &\leq 2\|\nabla c(x) (\nabla c(x)^{\top} \nabla c(x))^{-1} \nabla c(x)^{\top} - \nabla c(x') (\nabla c(x')^{\top} \nabla c(x'))^{-1} \nabla c(x')^{\top}\|\\
        &\leq 6\|\nabla c(x) - \nabla c(x')\|\|(\nabla c(x)^{\top} \nabla c(x))^{-1}\nabla c(x)^{\top}\| \\
        &\quad~+ 6\|\nabla c(x')\|\|\|(\nabla c(x)^{\top} \nabla c(x))^{-1} - (\nabla c(x')^{\top} \nabla c(x'))^{-1}\|\|\nabla c(x)\|\\
        &\quad~+ 6\|\nabla c(x')(\nabla c(x')^{\top} \nabla c(x'))^{-1}\|\| \|\nabla c(x) - \nabla c(x')\|\\
        &\leq \frac{12L_c L_g^c}{\nu^2} \|x - x'\| + 6 L_c^2 \|(\nabla c(x)^{\top} \nabla c(x))^{-1} - (\nabla c(x')^{\top} \nabla c(x'))^{-1}\|.
    \end{align*}
    Now from Sherman–Morrison–Woodbury Formula (Lemma \ref{lm:smw}), we have
    \begin{align*}
        & \|(\nabla c(x)^{\top} \nabla c(x))^{-1} - (\nabla c(x')^{\top} \nabla c(x'))^{-1}\| \\
        &= \left\|(\nabla c(x)^{\top} \nabla c(x))^{-1} \left[\nabla c(x')^{\top} \nabla c(x') - \nabla c(x)^{\top} \nabla c(x)\right] (\nabla c(x')^{\top} \nabla c(x'))^{-1}\right\|\\
        &\leq \left\|(\nabla c(x)^{\top} \nabla c(x))^{-1}\right\| \left\|\nabla c(x')^{\top} \nabla c(x') - \nabla c(x)^{\top} \nabla c(x)\right\| \left\| (\nabla c(x')^{\top} \nabla c(x'))^{-1}\right\|\\
        &\leq \frac{2L_c}{\nu^4}\|\nabla c(x) - \nabla c(x')\| \leq \frac{2L_c L_g^c}{\nu^4}\|x- x'\|.
    \end{align*}
    Hence, the following relations hold:
    \begin{align*}
    \|z' - z(x)\|_\infty \leq \|z' - z(x)\| & \le \frac{12L_c L_g^c}{\nu^2}\left(1 + \frac{L_c^2}{\nu^2}\right)\|x- x'\|\\
    & \leq \frac{12\sqrt{d}L_c L_g^c}{\nu^2}\left(1 + \frac{L_c^2}{\nu^2}\right) \|x-x'\|_\infty \\
    & \le \frac{12\sqrt{d}L_c L_g^c}{\nu^2}\left(1 + \frac{L_c^2}{\nu^2}\right) \iota \le \frac{\nu}{2},
    \end{align*}
    which together with \eqref{mfcq1} yields \([z']_j \ge \frac{\nu}{2}\) for all \(j \in \{ j : 0 \leq [x']_j \leq \iota \}.\)
\end{proof}

\subsection{Proof of Lemma \ref{lm:constraint_descent}}\label{proof:lm4}
\begin{proof}
It follows from {Lemma \ref{lm:ass}} that for $x_k$ there exists a  vector \( z_k \in \mathbb{R}^d \) with $\|z_k\|=1$ satisfying
    \[
    \nabla c_k^\top z_k = 0, \quad\mbox{and}\quad [z_k]_j \geq \frac{\nu}{2} \text{ for all } j \in \{ j : 0 \leq [x_k]_j \leq \iota \}.
    \]
    From \(\bar a = \min\{2,\iota \}\), we set \( \bar v_k = (1 - \vartheta) \bar a z_k/2 \cdot \min\{1,\|c_k\|\}\)  and \( \bar w_k = \vartheta (\nabla c_k^\top \nabla c_k)^{-1} c_k \). Next, we verify that $(\bar w_k,\bar v_k)$ is feasible to the problem in \eqref{opt4wu}. By the definition of \( \bar v_k \), it is easy to check that \( \nabla c_k^\top \bar v_k = 0 \) and $\|\bar v_k\|\le \|c_k\|$. We next  prove  \( x_k + \nabla c_k \bar w_k + \bar v_k \geq 0 \). On the one hand, for any {\(j \in \{ j : 0 \leq [x_k]_j \leq \iota \}\)}, we have
    \begin{align*}
        & [x_k - \nabla c_k \bar w_k + \bar v_k]_j\\
        &= [x_k]_j - \left[\vartheta \nabla c_k(\nabla c_k^\top \nabla c_k)^{-1} c_k\right]_j + \left[\frac{(1 - \vartheta)\bar a}{2 } z_k \cdot \min\{1,\|c_k\|\}\right]_j\\
        &\geq\vartheta\left[- \nabla c_k(\nabla c_k^\top \nabla c_k)^{-1} c_k \right]_j - \frac{\vartheta \bar a\nu}{4}\cdot \min\{1,\|c_k\|\} + \frac{\bar a\nu}{4}\cdot \min\{1,\|c_k\|\} \geq 0,
    \end{align*}
    where the last inequality holds if \(\vartheta \in (0,\frac{\bar a \nu}{4CL_c\nu^{-2}+\bar a \nu}]\). On the other hand, for those \(j \in \{ j : [x_k]_j > \iota \}\), we have
    \begin{align*}
        [x_k - \nabla c_k \bar w_k + \bar v_k]_j = [x_k]_j - \left[\vartheta \nabla c_k(\nabla c_k^\top \nabla c_k)^{-1} c_k\right]_j + \left[\frac{(1 - \vartheta)\bar a}{2 } z_k\right]_j
        \geq \iota - \frac{\bar a}{2} - \frac{\bar a}{2} \geq 0,
    \end{align*}
    where the first inequality follows from \(\vartheta \in (0,\frac{\bar a}{2CL_c\nu^{-2}}]\) and $[z_k]_j\ge -1$. Thus the point \((\bar w_k, \bar v_k)\) is feasible to the problem in \eqref{opt4wu}. Then for the solution \((w_k,v_k)\), we have
    \begin{align*}
        \|c_k - \nabla c_k^\top \nabla c_k w_k\| \leq \|c_k - \nabla c_k^\top \nabla c_k \bar w_k\| = \|c_k - \vartheta c_k\| = (1-\vartheta)\|c_k\|,
    \end{align*}
    and
    \[
    \|w_k\| = \|(\nabla c_k^\top \nabla c_k)^{-1}(c_k - (c_k - \nabla c_k^\top \nabla c_k w_k))\| \leq \nu^{-2}(2-\vartheta)\|c_k\|.
    \]
    The proof is completed.
\end{proof}

\subsection{Proof of Lemma \ref{bnd_lam_mu}}\label{proof:lmbnd}
\begin{proof}
     We first prove the uniform boundedness of  \(\{\mu_k\}\) by contradiction. Suppose \(\{\mu_k\}\) is unbounded, then there exists an  infinite set \(\mathcal S \subseteq \mathbb N\) such that \(\{\|\mu_k\|\}_{k \in \mathcal S} \to \infty\).
     Note that from the sufficient descent property \eqref{descent-ine} and Assumption \ref{ass:bound}, we have \(f(x_k) \leq f_0 + \rho_{\rm max} C\), thus \(\{x_k\}\) is a bounded sequence. Hence, there exists an infinite set \(\mathcal S_1 \subseteq  \mathcal S\) such that \(\lim_{k \to \infty, k \in \mathcal S_1} x_k = \bar x\). Due to the boundedness of  \(\{\frac{\mu_k}{\|\mu_k\|}\}\), there exists an infinite set \(\mathcal S_2 \subseteq \mathcal S_1\) such that \(\lim_{k \to \infty, k\in \mathcal S_2}\frac{\mu_k}{\|\mu_k\|} = \bar \mu\). Now
    dividing by \(\|\mu_k\|\) on the first equality of \eqref{multiplier} and taking limitation yield 
    \[
    \lim_{\substack{k \to \infty \\ k \in \mathcal S_2}} \frac{s_k + \nabla f_k + \nabla c_k w_k + \nabla c_k \lambda_k}{\|\mu_k\|} = \lim_{\substack{k \to \infty \\ k \in \mathcal S_2}} \frac{\mu_k}{\|\mu_k\|}.
    \]
    Since $\|\frac{\mu_k}{\|\mu_k\|}\|=1$ and $\mu_k\ge 0$, we have  $\bar\mu\ge0$ and $\|\bar\mu\|=1.$
    If $j$-th component of $\bar \mu$ is positive, i.e. \([\bar \mu]_j > 0\) for some $j$, it holds that \(\lim_{k \to \infty, k \in \mathcal S_2} [\mu_k]_j = \lim_{k \to \infty, k \in \mathcal S_2} [\bar \mu]_j \|\mu_k\| = \infty\). Besides, it follows from \eqref{descent-ine} and Assumption \ref{ass:bound} that \(\lim_{k \to \infty} \|s_k\|^2 = 0\).  Thus  it implies that \(\lim_{k \to \infty, k \in \mathcal S_2} [x_k]_j = [\bar x]_j = 0\), which means  the \(j\)-th component of $\bar x$ is active. Under   Assumption \ref{ass:mfcq}, there is a vector \(z \in \R^d\) with \(\|z\| = 1\) such that 
    \[
    \nabla c_i(\bar x)^\top z = 0 \quad \text{for all } i = 1, \ldots, m,
    \]
    \[
    [z]_j \ge \nu \quad \text{for all } j \in \{ j : [\bar x]_j = 0 \}.
    \]
    Then one has
    \[
    0 = \lim_{\substack{k \to \infty \\ k \in \mathcal S_2}} \frac{z^\top (s_k + \nabla f_k + \nabla c_k w_k + \nabla c_k \lambda_k)}{\|\mu_k\|} = \lim_{\substack{k \to \infty \\ k \in \mathcal S_2}} \frac{z^\top \mu_k}{\|\mu_k\|} > 0,
    \]
    which contradicts the assumption that $\{\mu_k\}$ is unbounded. Thus, there exists a constant \(\kappa_\mu>0\) such that
    \(
    \|\mu_k\| \leq \kappa_\mu
    \) for all $k\ge0.$ Consequently,  \(\{\lambda_k\}\) is also bounded by 
    \begin{align*}
    \|\lambda_k\| =\| (\nabla c_k^\top \nabla c_k)^{-1} \nabla c_k^\top (\mu_k - s_k - \nabla f_k) - w_k\| & =\| (\nabla c_k^\top \nabla c_k)^{-1}\nabla c_k^\top (\mu_k - \nabla f_k)\|\\
    & \le \nu^{-2}L_c(\kappa_\mu+ L_f)=\kappa_{\lambda}.
    \end{align*} 
    The proof is completed.
\end{proof}

\end{document}